\renewcommand{\baselinestretch}{1.43}
\newtheorem{theorem}{Theorem}[section]
\newtheorem{proposition}[theorem]{Proposition}
\newtheorem{lemma}[theorem]{Lemma}
\newtheorem{corollary}[theorem]{Corollary}
\newtheorem{remark}{Remark}[section]
\newtheorem{example}[theorem]{Example}
\newtheorem{definition}[theorem]{Definition}
\newcommand{\be}{\boldsymbol{\epsilon}}
\newcommand{\bt}{{\boldsymbol{\theta}}}
\newcommand{\bbt}{{\boldsymbol{\beta}}}
\newcommand{\hbbt}{\hat{\boldsymbol{\beta}}}
\newcommand{\btheta}{{\boldsymbol{\theta}}}
\newcommand{\bg}{\boldsymbol{\gamma}}
\newcommand{\bxi}{\boldsymbol{\xi}}
\newcommand{\E}{\mathbb{E}}
\newcommand{\R}{\mathbb{R}}
\newcommand{\G}{\mathcal{G}}
\newcommand{\by}{\mathbf{y}}
\newcommand{\bx}{\mathbf{x}}
\newcommand{\bz}{\mathbf{z}}
\newcommand{\bu}{\mathbf{u}}
\newcommand{\bv}{\mathbf{v}}
\newcommand{\bw}{\mathbf{w}}
\newcommand{\ba}{\mathbf{a}}
\newcommand{\bb}{\mathbf{b}}
\newcommand{\bc}{\mathbf{c}}
\newcommand{\bd}{\mathbf{d}}
\newcommand{\bh}{\mathbf{h}}
\newcommand{\X}{\mathcal{X}}
\newcommand{\C}{\mathcal{C}}
\newcommand{\Q}{\mathcal{Q}}
\newcommand{\tbt}{\widetilde{\boldsymbol{\theta}}}
\newcommand{\Proj}{\text{Proj}}
\newcommand{\df}{\text{df}}
\newcommand{\hbt}{{\hat{\boldsymbol{\theta}}}}
\newcommand{\hbxi}{\hat{\boldsymbol{\xi}}}
\renewcommand{\df}{{\mathrm{df}}}
\renewcommand{\df}{{\mathrm{df}}}
\DeclareMathOperator*{\argmin}{arg\,min}
\DeclareMathOperator*{\argmax}{arg\,max}
\let\hat\widehat
\newcommand{\Rmnum}[1]{\expandafter\@slowromancap\romannumeral #1@}
\definecolor{DSgray}{cmyk}{0,1,0,0}
\newcounter{rcnt}[section]
\renewcommand{\thercnt}{(\roman{rcnt})}
\newcommand{\red}{\color{red}}
\newcommand{\blind}{0}
\begin{document}

\def\spacingset#1{\renewcommand{\baselinestretch}%
{#1}} \spacingset{1.4}

%%%%%%%%%%%%%%%%%%%%%%%%%%%%%%%%%%%%%%%%%%%%%%%%%%%%%%%%%%%%%%%%%%%%%%%%%%%%%%
 \title{\bf On Degrees of Freedom of Projection Estimators with Applications to Multivariate Nonparametric Regression}
\author{Xi Chen \footnote{Supported by Alibaba Innovation Research Award and Bloomberg Data Science Research Award; e-mail: xchen3@stern.nyu.edu}  \\
	Stern School of Business, New York University \\
	and \\
	Qihang Lin \footnote{e-mail: qihang-lin@uiowa.edu} \\ %\thanks{Xi would like to thank Google faculty research award for supporting this work.} \\
	Tippie College of Business, University of Iowa \\
	and \\
	Bodhisattva Sen \footnote{Supported by NSF grants DMS-1712822 and AST-1614743; e-mail: bodhi@stat.columbia.edu}  \\
	Department of Statistics, Columbia University
}
\date{}
\maketitle

\begin{abstract}
In this paper, we consider the nonparametric regression problem with multivariate predictors. We provide a characterization of the degrees of freedom and divergence for estimators of the unknown regression function, which are obtained as outputs of linearly constrained quadratic optimization procedures; namely, minimizers of the least squares criterion with linear constraints and/or quadratic penalties. %Specifically, the estimators considered in this paper are obtained as the (linearly or quadratically perturbed) partial projection of the response vector, in $\R^n$, onto a closed convex set $\C \subseteq \R^n$.
As special cases of our results, we derive explicit expressions for the degrees of freedom in many   nonparametric regression problems, e.g., bounded isotonic regression, multivariate (penalized) convex regression, and additive total variation regularization. Our theory also yields, as special cases, known results on the degrees of freedom of many well-studied estimators in the statistics literature, such as ridge regression, Lasso and generalized Lasso. Our results can be readily used to choose the tuning parameter(s) involved in the estimation procedure by minimizing the Stein's unbiased risk estimate. %We illustrate this through simulation studies for bounded isotonic regression and penalized convex regression.
As a by-product of our analysis we derive an interesting connection between bounded isotonic regression and isotonic regression on a general partially ordered set, which is of independent interest.
\end{abstract}

\noindent%
{\it Keywords:} Additive model, bounded isotonic regression, divergence of an estimator, generalized Lasso, multivariate convex regression.

\section{Introduction}
Consider the problem of nonparametric  regression with observations $\{(\bx_i,y_i): i=1, \ldots, n\}$ satisfying
\begin{equation}\label{eq:RegMdl2}
	y_i = f(\bx_i) + \epsilon_i, \qquad \mbox{for } i = 1,\ldots, n,
\end{equation}
where $\epsilon_1,\ldots, \epsilon_n$ are i.i.d.~$N(0,\sigma^2)$ (unobserved) errors, $\bx_1, \ldots, \bx_n$ are design points in $\R^d$ ($d \ge 1$) and the regression function $f$ is unknown. In this paper we study the degrees of freedom and divergence of nonparametric estimators of $f$ that are obtained as outputs of linearly constrained quadratic optimization procedures, namely, minimizers of the least squares criterion with linear constraints and/or quadratic penalties. Letting $\bt^* := (f(\bx_1),\ldots, f(\bx_n))$, these problems are characterized by constraints on $\bt^*$ whereby $\bt^* \in \C$ for some suitable closed convex set $\C \subset \R^n$. We briefly introduce the three main examples we will study in detail in this paper, namely isotonic regression, convex regression, and additive total variation regularization. \newline

\noindent {\bf Example 1} (Isotonic regression) If $f$ is assumed to be nondecreasing and the $x_i$'s are univariate and ordered (i.e., $x_1< x_2 < \cdots < x_n$), then $\bt^* \in \mathcal{M}$, where
\begin{equation}\label{eq:IsoReg}
\mathcal{M} := \{\btheta \in \R^n: \theta_1 \le \theta_2 \le \ldots \le \theta_n\}.
\end{equation}
Isotonic regression has  a long history in statistics; see e.g.,~\citet{Brunk55}, \citet{AyerEtAl55}, and~\citet{vanEeden58}. Isotonic regression can be easily extended to the setup where the predictors take values in any space with a partial order; see Section~\ref{sec:DFIso} for the details.

The isotonic least squares estimator (LSE) $\hbt(\by)$, which is defined as the Euclidean  projection of $\by := (y_1,\ldots, y_n)$ onto $\mathcal{M}$, i.e.,
\begin{equation}\label{eq:Proj-Iso}
\hat \bt(\by) := \argmin_{\bt \in \mathcal{M}} \|\by - \bt \|_2^2
\end{equation}
(here $\|\cdot\|_2$ denotes the usual Euclidean norm) is a natural estimator in this problem and has many desirable properties (see e.g., \cite{GJ14}). However, it suffers from the ``spiking'' effect \citep{WS93,Pal08}, i.e., it is inconsistent at the boundary of the covariate domain.  For multivariate predictors, this over-fitting of the LSE can be even more pronounced and some recent research has focused on studying the regularized isotonic LSE (see e.g.,~\cite{LRS12,LR14,WMO15}). A natural way to regularize the model complexity would be  to consider {\it bounded isotonic} regression: $\bt^*$ is assumed to be nondecreasing and the range of $\bt^*$ is assumed to be bounded by $\lambda$, for $\lambda >0$. In Section~\ref{sec:DFIso}, we show that for bounded isotonic regression,  $\bt^* = (f(\bx_1),\ldots, f(\bx_n))$ belongs to a closed polyhedral set $\mathcal{C}$ (i.e., an intersection of finitely many hyperplanes) that can be expressed in the general form as $ \vspace{-.03in}$
\begin{equation}\label{eq:CvxPoly}
\mathcal{C} =  \{\btheta \in \R^n: B \btheta \le \mathbf{c} \}  \vspace{-.03in}
\end{equation}
for some suitable matrix $B\in \mathbb{R}^{m \times n}$ and a vector $ \mathbf{c} \in \mathbb{R}^{m \times 1}$; here the inequality between vectors is understood in a component-wise sense. \newline

%In fact, for multivariate predictors, to avoid over-fitting, a more useful formulation would be to consider {\it bounded isotonic} regression: $f$ is assumed to be nondecreasing and the range of $f$ is bounded by $\lambda$, for $\lambda >0$. In Section~\ref{sec:DFIso}, we show that for bounded isotonic regression,  $\bt^* = (f(\bx_1),\ldots, f(\bx_n))$ belongs to a closed polyhedral set $\mathcal{C}$ (i.e., an intersection of finitely many hyperplanes) that can be expressed in the general form as $ \vspace{-.03in}$
%\begin{equation}\label{eq:CvxPoly}
%\mathcal{C} :=  \{\btheta \in \R^n: B \btheta \le \mathbf{c} \}  \vspace{-.03in}
%\end{equation}
%for some suitable matrix $B\in \mathbb{R}^{m \times n}$ and a vector $ \mathbf{c} \in \mathbb{R}^{m \times 1}$, where the inequality between vectors is understood in a component-wise sense. \newline

%Besides bounded isotonic regression, there exists many other examples where $\bt^* = (f(\bx_1),\ldots, f(\bx_n))$ belongs to a closed polyhedral set like \eqref{eq:CvxPoly}. Here is another example in convex regression.

\noindent {\bf Example 2} (Convex regression) In convex regression (see e.g.,~\citet{Hildreth54}, \citet{K08}, \citet{SS11}, \citet{LG12}, \citet{Xu14Convex}, \citet{HanWellner:16}) $f: \R^d \to \R$  is known to be a convex function  (see~\eqref{eq:RegMdl2}) and $\bx_1,\ldots, \bx_n$ are the design points in $\R^d$, $d \ge 1$. Letting $\bt^* := (f(\bx_1),\ldots, f(\bx_n))$, it can be shown that the convexity of $f$ is equivalent to $\bt^*$ belonging to a convex polyhedral set $\C$. For example, when $d=1$ and the $x_i$'s are ordered, $\C$ has the following simple characterization:
\begin{equation}\label{eq:CvxReg}
\C = \left\{\btheta \in \R^n: \frac{\theta_2 - \theta_1}{x_2 - x_1} \le \ldots \le \frac{\theta_n - \theta_{n-1}}{x_n - x_{n-1}} \right\}.
\end{equation}
However, for $d \ge 2$, the characterization of the underlying convex set $\C$ is more complex. In this case, there must exist a auxiliary vector $\bxi:=[\bxi_1^\top, \ldots, \bxi_n^\top]^\top \in\mathbb{R}^{dn}$  representing the subgradient of $f(\bx_j)$, for $j=1, \ldots, n$, such that $\left\langle\bxi_j,\bx_i-\bx_j \right\rangle\leq \theta_i-\theta_j$, for $i,j=1,\dots,n$. Thus, $\C$ can be expressed as the {\it projection} of the higher-dimensional polyhedron
\begin{equation}\label{eq:Q_cvxreg}
\left\{(\bxi,\bt) \in \R^{dn +n} :\bxi=[\bxi_1^\top, \ldots, \bxi_n^\top]^\top, \left\langle\bxi_j,\bx_i-\bx_j \right\rangle\leq \theta_i-\theta_j,\forall \;i,j=1,\dots,n \right\},
\end{equation}
onto the space of $\bt$. Although the projection of a polyhedron is still a polyhedron, it is difficult to express $\C$ in the form  of~\eqref{eq:CvxPoly} explicitly.

As before, a natural estimator of $\bt^*$ in this problem is the LSE defined as in~\eqref{eq:Proj-Iso} with $\mathcal{M}$ replaced by $\C$. For multivariate designs, the classical convex LSE tends to overfit the data, especially near the boundary of the convex hull of the design points. To avoid this over-fitting, \cite{SM13} and \cite{Lim14} propose a  regularization technique using the norm of the subgradients, which leads to penalized convex regression (see Section \ref{sec:DFCvxReg} for the details). \newline

\noindent {\bf Example 3} (Additive total variation regression) Suppose that $d=1$ and $f$ (as defined in~\eqref{eq:RegMdl2}) is a function of bounded variation. In this case a popular estimator of $f$ is to consider the total variation (TV) regularized regression (\cite{RudinEtAl92}; also see~\cite{MV97}) which can be expressed as $ \vspace{-.03in}$
\begin{equation}\label{eq:TV}
\hat \bt(\by) = \arg \min_{\bt \in \R^n} \sum_{i=1}^n (y_i - \theta_i)^2 + \lambda \sum_{i=2}^n |\theta_i - \theta_{i-1}| \vspace{-.03in}
\end{equation}
where $\lambda >0$ is a tuning parameter. The presence of the $\ell_1$-norm in the penalty term in~\eqref{eq:TV} ensures sparsity of the vector $(\hat \theta_2 - \hat \theta_1,\ldots, \hat \theta_n - \hat \theta_{n-1})$; thus $\hat \bt(\by)$ is piecewise constant with adaptively chosen break-points. The motivation for using~\eqref{eq:TV} to estimate $\bt^* := (f(\bx_1),\ldots, f(\bx_n))$ comes from the belief that $\bt^*$ lies in the closed convex set  $\C = \big \{\bt \in \R^n: \sum_{i=2}^n |\theta_i - \theta_{i-1}| \le V \big\}$ for some $V >0$; indeed~\eqref{eq:TV} expresses the above constraint in the penalized form. TV regularization has many important applications, especially in image processing; also see the closely related method of fused Lasso (\cite{Fused05}).

When we have multidimensional predictors, i.e., $d > 1$, to alleviate the curse of dimensionality, it is useful to consider an additive model of the form $f(x_{1}, \ldots, x_{d}) := \sum_{j=1}^d f_j(x_{j})$, where each $f_j(\cdot)$ is assumed to be of bounded variation. A natural estimator in this scenario, which is an extension of~\eqref{eq:TV}, is the additive TV regression (\cite{Petersen:16}), where we minimize the sum of squared errors constraining the sum of the variations for each $f_j(\cdot)$. We study this estimator in Section~\ref{sec:AddMdl}. In fact, we consider a more general setup where each $f_j(\cdot)$ can have different degrees of ``smoothness''.   \newline

All the above three examples can be succinctly expressed in the Gaussian sequence model:
\begin{eqnarray}\label{eq:SeqMdl}
	\by = \bt^* + \be,
\end{eqnarray}
where we observe $\by = (y_1,\ldots, y_n)\in \R^n$, $\bt^* = (\theta_1^*,\ldots, \theta_n^*) \in \R^n$ is the unknown parameter of interest known to belong to a given closed convex set $\C \subseteq \R^n$ (recall that $\bt^*$ corresponds to $(f(\bx_1),\ldots, f(\bx_n))$), and $\be \sim N(\mathbf{0},\sigma^2 I_n)$ ($I_n$ is the $n \times n$ identity matrix) is the unobserved error. Let $\hat \bt(\by) := (\hat \theta_1,\ldots, \hat \theta_n)$ be an estimator of $\bt^*$. The ``degrees of freedom'' of $\hat \bt(\by)$ (see \citet{Efron04}) is defined as
\begin{equation}\label{eq:df_cov}
\mbox{df}(\hat \bt(\by)) := \frac{1}{\sigma^2} \sum_{i=1}^n \mbox{Cov}(\hat \theta_i,y_i).
\end{equation}
Degrees of freedom (DF) is an important concept in statistical modeling and is often used to quantify the model complexity of a statistical procedure; see e.g., \citet{MW00},~\citet{Zou07},~\citet{TT12},  and the references therein. Intuitively, the quantity $\mbox{df}(\hat \bt(\by))$ reflects the effective number of parameters used by $\hat \bt(\by)$ in producing the fitted output, e.g., in linear regression, if $\hat \bt(\by)$ is the LSE of $\by$ onto a subspace of dimension $d<n$, the DF of $\hat \bt(\by)$ is simply $d$. Using Stein's lemma it follows that (see~\citet{MW00} and~\citet{TT12})
$$ \mbox{df}(\hat \bt(\by)) = \E [D(\by)]$$ where
\begin{equation}\label{eq:Div}
	D(\by) = \mbox{div}(\hat \bt(\by)) := \sum_{i=1}^n \frac{\partial} {\partial y_i} \hat \theta_i(\by) = \nabla_{\by} \hat{\bt}(\by)
\end{equation}
is called the {\it divergence} of $\hat \bt(\by)$. Thus, $D(\by)$ is an unbiased estimator of df$(\hat \bt(\by))$. This has many important implications, e.g., Stein's unbiased risk estimate (SURE); see~\citet{S81}. Aside from plainly estimating the risk of an estimator, one could also use SURE for model selection purposes: if the estimator depends on a tuning parameter, then one could choose this parameter by minimizing SURE. This has been successfully used in many statistical problems, see e.g.,~\citet{DJ94},~\citet{XieEtAl12},~\citet{Candes13}, and~\citet{FengZou12} for applications in wavelet denoising,   heteroscedastic hierarchical models, singular value thresholding, and bandable covariance matrices, respectively. We elaborate on this connection in Section~\ref{sec:SURE}.

%see e.g., \citet{DJ94} for an application in wavelet denoising,~\citet{Mukherjee15} for an example in reduced rank regression,~\citet{Candes13} for an application in singular value thresholding. We elaborate on this connection in Section~\ref{sec:SURE}.

In this paper we develop a theoretical framework to evaluate the divergence (as defined in~\eqref{eq:Div}) for a broad class of (nonparametric) regression estimators that are minimizers of the least squares criterion with linear constraints and/or quadratic penalties. Our theory also recovers many existing results (see Section \ref{sec:proof_other_app} in the supplementary material), which include the exact expressions for divergence for ridge regression (see~\citet{Li86}) and the active set representation of the divergence for Lasso and generalized Lasso (see~\citet{Zou07} and \citet{TT12}).  %but also contribute to the geometric understanding of DF.

%%Our motivation for studying DF in this generality is motivated by problems in shape constrained regression. In shape restricted regression the observations $\{(\bx_i,y_i): i=1, \ldots, n\}$ satisfy
%\begin{equation}\label{eq:RegMdl2}
%	y_i = f(\bx_i) + \epsilon_i, \qquad \mbox{for } i = 1,\ldots, n,
%\end{equation}
%where $\epsilon_1,\ldots, \epsilon_n$ are i.i.d.~$N(0,\sigma^2)$ errors, $\bx_1, \ldots, \bx_n$ are design points in $\R^d$ ($d \ge 1$) and the regression function $f$ is unknown but obey certain known restrictions like monotonicity, convexity, etc. Letting $\bt^* := (f(\bx_1),\ldots, f(\bx_n))$, \eqref{eq:RegMdl2} can be rewritten as in~\eqref{eq:SeqMdl} where the known shape restriction on $f$ translates to linear constraints on $\bt^*$ whereby $\bt^* \in \C$ for some suitable closed convex set $\C \subset \R^n$. We briefly introduce the two main examples we will study in detail in this paper, namely isotonic and convex regression. \newline

In the following we motivate the general form of the estimators we study in this paper. In many  regression problems, $\bt^* \in \C\subset \R^n$ where $\C$ is a polyhedron. Moreover, in many of these problem (e.g., convex regression) $\C$ is not easily expressible in the form~\eqref{eq:CvxPoly}, but can be described as the projection of a higher-dimensional polyhedron of $(\bxi,\bt)$ onto the space of $\bt$ (see e.g.,~\eqref{eq:Q_cvxreg}). In particular, this higher-dimensional polyhedron can, in general, be represented as
\begin{equation}\label{eq:Q}
\Q:=\{(\bxi,\bt) \in \R^{p +n} : A \bxi + B \bt \leq \bc\}
\end{equation}
where $\bxi \in \R^p$ is the auxiliary variable and $A \in \mathbb{R}^{m \times p}$, $B \in \mathbb{R}^{m \times n}$ and $\bc \in \mathbb{R}^{m}$ are suitable matrices. The true parameter $\bt^*$ thus belongs to the set $\C := \Proj_{\bt}(\Q)$  defined as
\begin{equation}\label{eq:proj_C}
\Proj_{\bt}(\Q):=\{\bt \in \R^n: \exists \; \bxi \in \R^{p} \; \text{such that} \; (\bxi, \bt) \in \Q\}.
\end{equation}
%\xnote{I remove the definition $\C=\Proj_{\bt}(\Q)$ to avoid replicated definitions of $\C$. Please help double check if any place use $\C$  that means $\Proj_{\bt}(\Q)$. }
%{\red Here, we abuse the notation $\C$ to represent the polyhedron that $\bt^*$ belongs to and it will be clear from the context when we refer to $\C$ (depending on whether $\bxi$ exists).
{A natural estimator of $\bt^*$ in this situation is the LSE $\hbt(\by) :=\argmin_{\bt \in \Proj_{\bt}(\Q)} \frac{1}{2} \|\bt-\by\|_2^2,$ which is equivalent to  $(\hbt(\by), \hbxi(\by)) \in \argmin_{(\bt, \bxi) \in \Q}\frac{1}{2} \|\bt-\by\|_2^2$. Instead of considering this partially projected LSE, we study a more general formulation by adding \emph{linear} and \emph{quadratic perturbations} in the objective function to accommodate more applications:}
%\newline%A useful extension of convex regression with higher dimensional predictors that can reduce possible overfitting is penalized convex regression. Here we penalize the norm of the subgradients $\bxi_j$'s (formulated as a quadratic regularization on $\bxi_j$); see Section \ref{sec:LipCvxReg} for the details.
%turns into a second order cone constraint on $\bxi_j$; see Section \ref{sec:LipCvxReg} for the details.
%In such problems a natural estimator of $\bt^*$ is $\hbt(\by)$,  obtained from the following optimization problem:
\begin{eqnarray}\label{eq:LSE}
  (\hbt(\by), \hbxi(\by)) & \in &  \argmin_{\bt, \bxi}\frac{1}{2} \|\bt-\by\|_2^2 +\bd^\top\bxi+ \frac{\lambda}{2}\|\bxi\|_2^2\\
            & & \;\;\; \text{s.t.} \;   A\bxi + B\bt \leq \bc, \nonumber
\end{eqnarray}
where $A=[\ba_1, \ldots, \ba_m]^\top \in \mathbb{R}^{m \times p}$, $B=[\bb_1, \ldots, \bb_m]^\top \in \mathbb{R}^{m \times n}$, $\bc \in \mathbb{R}^{m}$, $\bd \in \mathbb{R}^{p}$ and $\lambda \geq 0$ is a regularization parameter. As we will show below~\eqref{eq:LSE} finds many statistical applications beyond the examples described above.  Note that the objective function in~\eqref{eq:LSE} is strongly convex in $\bt$ and convex in $\bxi$; moreover, if $\lambda > 0$, it is strongly convex in both $\bt$ and $\bxi$.

Formulation~\eqref{eq:LSE} covers a wide range of useful estimators in shape-restricted nonparametric regression, additive total variation regression, and Lasso-related problems. For example, when $\bd=\mathbf{0}$, $\lambda=0$ but $A$ is not a zero matrix,~\eqref{eq:LSE} becomes
\begin{eqnarray}
     \label{eq:LSE_mul_cvx}
          (\hbt(\by), \hbxi(\by))  = \argmin_{(\bt, \bxi)\in \mathcal{Q}} \frac{1}{2}\|\bt-\by\|_2^2,
\end{eqnarray}
where $\mathcal{Q}$ is defined  in~\eqref{eq:Q}. This formulation can also be viewed as the projection of $\by$ onto a polyhedron $\Proj_{\bt}(\Q)$ defined in \eqref{eq:proj_C}. This class of problems include the LSE in multivariate convex regression for which DF has not been studied before (see Section~\ref{sec:DFCvxReg} for the details). Based on \eqref{eq:LSE_mul_cvx},  if we further have $\bd \neq \mathbf{0}$, then \eqref{eq:LSE} reduces to
\begin{equation}\label{eq:linear_per}
 (\hbt(\by), \hbxi(\by))  = \argmin_{(\bt, \bxi)\in \mathcal{Q}} \frac{1}{2}\|\bt-\by\|_2^2+\bd^\top\bxi.
\end{equation}
This formulation includes many  examples in statistics, such as additive TV regression  (see Example 3 above) and $\ell_\infty$-regularized group Lasso (see Section \ref{sec:other_app}). Moreover, when $\bd=\mathbf{0}$ and $\lambda>0$ in \eqref{eq:LSE}, the corresponding optimization problem becomes
       \begin{eqnarray}
      \label{eq:LSE_Lip_multi_conv}
              (\hbt(\by), \hbxi(\by))  = \argmin_{(\bt, \bxi)\in \mathcal{Q}} \frac{1}{2}\|\bt-\by\|_2^2+\frac{\lambda}{2}\|\bxi\|_2^2,
        \end{eqnarray}
which includes the example of {penalized  multivariate convex regression}, where the norm of the subgradient $\bxi$ is penalized.

In the following we briefly describe some of the main contributions of this paper.
%Given the wide applicability of the formulation \eqref{eq:LSE}, the first goal is to provide an easy-to-compute formula for calculating the DF of $\hbt(\by)$ in \eqref{eq:LSE}. More precisely,  is three-fold:
\begin{enumerate}
  \item  We characterize the divergence and DF of $\hat \bt(\by)$, as defined in \eqref{eq:LSE}, by providing easy-to-compute formulas. Our main result, Theorem \ref{thm:div}, can be used to compute the divergence and DF in any statistical regression problem where the estimator can be expressed in the form~\eqref{eq:LSE}. A special case of \eqref{eq:LSE} --- projection onto a convex polyhedron --- has been studied in the literature \citep{Kato09DF, TT12} where
      \begin{eqnarray}
       \label{eq:Proj}
       \hbt(\by)=P_\C(\by):=\argmin_{\bt\in \mathcal{C} }\frac{1}{2} \|\bt-\by\|_2^2, \vspace{-.07in}
       \end{eqnarray}
       and $\mathcal{C} =  \{\btheta \in \R^n: B \btheta \le \mathbf{c} \}$ is as defined in \eqref{eq:CvxPoly}.
Our main theorem generalizes these previous results. In particular, when $\bd\neq 0$ and $\lambda=0$ in \eqref{eq:LSE}, the problem is challenging as now $\hbt(\by)$ cannot be written as a projection estimator. When $\lambda>0$, although \eqref{eq:LSE} can be viewed as a projection problem in a higher dimensional space, the previous results on the projection estimator  cannot be directly applied to obtain the divergence of $\hat \bt(\by)$ (see Remark \ref{rem:nontrivial} for details).

       %In fact, even in the simple case when $\bd=\mathbf{0}$ and   $\lambda=0$ (see~\eqref{eq:LSE_mul_cvx}), since $\C = \Proj_{\bt}(\Q)$ cannot always be easily represented as a system of inequalities as in \eqref{eq:CvxPoly}, it is not obvious  how to derive an explicit expression for the divergence. Moreover, when $\mathbf{d}\neq 0$ and $\lambda=0$ in~\eqref{eq:LSE}, the problem is quite challenging as now $\hbt(\by)$ cannot be written as a projection estimator.

  \item Using our main result we derive the DF for many estimators, including multivariate convex regression, penalized convex regression, (bounded) isotonic regression, additive TV regression, $\ell_\infty$-regularized group Lasso, etc. Note that although the divergences and DF for Lasso and generalized Lasso have been characterized in \citet{Zou07} and~\citet{TT12} we demonstrate that we recover their results (in the active set representation) as straightforward consequences of Theorem~\ref{thm:div}; see Section \ref{sec:proof_other_app} in the supplement for the details.

  \item {For bounded isotonic regression  where the design points are allowed to belong to any partially ordered set, we establish the equivalence between the divergence of the isotonic LSE and the number of connected components of the graph induced by the LSE (see Proposition \ref{prop:uni_bound_iso}). This result is not only theoretically interesting but also provides a fast algorithm for computing the divergence in this problem.  Moreover, we establish a  connection between the LSE for bounded isotonic regression and that for unbounded isotonic regression, a result which is of independent interest. In particular, we show that the bounded isotonic LSE  can be easily obtained by appropriately thresholding the unbounded isotonic LSE (see Proposition \ref{prop:threshold_gen}). Further, using this property, we show the monotonicity of divergence (and DF) as a function of the model complexity parameter  --- this shows that DF indeed characetrizes model complexity --- for bounded isotonic regression.}

\end{enumerate}

In the following we compare and contrast our results with some of the recent work on divergence and DF for projection estimators. {\cite{Kato09DF} characterizes the DF in shrinkage regression where the coefficients belong to a closed convex set. The estimation problem considered by \cite{Kato09DF} contains \eqref{eq:LSE_mul_cvx} as a special case but his result cannot be directly applied to \eqref{eq:linear_per} when $\bd\neq\mathbf{0}$. As a consequence, \cite{Kato09DF} can characterize DF for generalized Lasso expressed in a constrained form while we can characterize the DF in the penalized form (as described in Section~\ref{sec:proof_other_app} of the supplementary file).}
%\cite{Kato09DF} characterizes the divergence of the projection estimator onto a convex set $\C$ under a smoothness assumption on the boundary of $\C$. However, it can be difficult to apply the results in~\cite{Kato09DF} to numerically compute the divergence for many convex sets $\C$. %(e.g., it requires the knowing a set of basis vectors for the face of $\C$ containing $\hbt(\by)$, which may not be easily obtained). For example, when $\C$ is a convex polyhedron, the method of \cite{Kato09DF} requires knowing a set of basis vectors for the face of $\C$ containing $\hbt(\by)$, which may not be easily obtained (e.g., see $\Proj_\theta(\Q)$ as defined in~\eqref{eq:proj_C}).In contrast, our method is computationally simple as it only uses the inequalities defining $\Q$ directly (see  Theorem \ref{thm:div}). %Moreover, the characterization of the divergence by \cite{Kato09DF} does not cover more general estimation problems formulated as linearly or quadratically perturbed projections which our methods provide a solution to.
%When $\C$ is a polyhedron, \cite{Kato09DF} characterize the divergence of the projection operator using the dimension of
\citet{Hansen14DF} consider the closed constraint set $\C=\zeta(\mathcal{B})$ where $\mathcal{B} \subseteq \R^p$ is a closed set and $\zeta: \R^p \rightarrow \R^n$ is a (possibly non-linear) map satisfying some regularity conditions. %There main result is quite general in the sense that $\zeta$ can be a nonlinear map and thus $\C$ is not necessarily convex.
Their main result (Theorem 3) requires the optimal solution $\hat{\bbt}$ to be in the \emph{interior} of $\mathcal{B}$ (which is almost never the case in the examples of interest to us) and a variant of the Hessian matrix of $\zeta(\hat{\bbt})$ to be full rank (e.g., when $\zeta(\bbt)=X\bbt$, it requires that $X^\top X$ is full rank). The results in \cite{Hansen14DF} can only deal with a constraint set that can be explicitly written as a set of inequalities (e.g., the general projected polyhedron $\Proj_\theta(\Q)$ in \eqref{eq:proj_C} is not allowed) and cannot be applied to regularized estimators (e.g.,~generalized Lasso as described in Section~\ref{sec:proof_other_app} of the supplementary file and penalized multivariate convex regression as described in Section \ref{sec:DFCvxReg}). \citet{Vaiter14DF} study DF for a class of regularized regression problems that include Lasso and group Lasso as special cases. However, their paper does not consider constrained formulations and thus cannot be applied to shape restricted regression problems. \citet{Mikkelsen:16} provide a characterization of DF for a class of estimators which are locally Lipschitz continuous on each of a finite number of open sets that cover $\mathbb{R}^n$.
%but does not consider the projection of a higher dimensional polyhedron as in \eqref{eq:proj_C}.
\citet{Rueda13DF} utilize the results of \citet{MW00} to study the DF for the specific problem of semiparametric additive (univariate) monotone regression.

In the recent papers \citet{Kaufman14} and \citet{Janson15} the authors argue that in many problems DF might not be an appropriate notion for characterizing model complexity. They provide counter examples of situations where DF is not monotone in the model complexity parameter (or DF is unbounded). However, most of these counter examples either involve nonconvex constraints or non-Gaussian or heteroscedastic noise --- in \cite{Janson15} it is argued that such irregular behavior happens ``whenever we project onto a nonconvex model". Nevertheless, some of the main applications in our paper, namely, bounded isotonic regression and additive total variation regression, correspond to projections onto polyhedral convex sets with i.i.d.~Gaussian noise so the irregular behavior of DF, observed in some of the counter examples, may not occur here. In fact, in Theorem~\ref{thm:iso_bounded_monotone}  we prove that for bounded isotonic regression, DF is indeed monotone in the model complexity parameter. %{\red Does the same result hold for additive total variation regression?} %Note that Section 3 of~\cite{Kaufman14} gives some sufficient conditions on the structure of the model that guarantee the monotonicity of DF with increasing regularization.

The paper is organized as follows. %We start with a review of some useful concepts from convex analysis in Section~\ref{sec:CvxPoly}. A brief description of some problems in shape restricted regression and some basic results on the divergence of projection estimators are given in Section~\ref{sec:DF}. %In the process we also derive results on the divergence of univariate convex regression.
In Section~\ref{sec:DF} we provide some basic results on the divergence of projection estimators. In Section~\ref{sec:main} we state our main result. In Sections~\ref{sec:DFCvxReg}, \ref{sec:DFIso}, and \ref{sec:other_app}, we discuss many applications of our main result to different regression problems. In Section~\ref{sec:SURE} we discuss how the characterization of divergence of estimators (computed in the paper) can be useful in model selection (choice of tuning parameter) based on SURE, and illustrate this for bounded isotonic regression and penalized multivariate convex regression.   %We demonstrate the performance of the estimator chosen by minimizing SURE to find appropriate tuning parameters in two examples, namely, bounded isotonic regression and penalized multivariate convex regression. Indeed, we see substantial gains in the performance of the estimators tuned using SURE.
We relegate all the technical proofs, graphical illustrations, as well as the derivation of some existing results (such as generalized Lasso) using our main theorem to the supplementary material.%~\citep{ChenLinSen}.

%Finally,  We further conduct empirical studies to  for bounded isotonic regression and  penalized multivariate  convex  regression.

\section{An Existing Result on DF}\label{sec:DF}

DF is an important concept in statistical modeling as it provides a quantitative description of the amount of fitting performed by a given procedure. Despite its fundamental role in statistics, its behavior is not completely well-understood, even for widely used estimators.

In this section we review an important known result on DF and the divergence of the projection estimator $\hat \bt(\by)$ when $\C$ is a convex polyhedron as defined in \eqref{eq:CvxPoly}; see~\eqref{eq:Proj}. We will assume that the reader is familiar with basic concepts from convex analysis (see Section~\ref{sec:convex} in the supplementary material where we provide a review of some basic concepts: polyhedron, cone, normal cone, affine hull, interior, boundary, relative interior, relative boundary, etc).

 {The following result, due to \citet[Lemma 3.2]{Kato09DF}\footnote{In fact, Lemma 3.2 in~\cite{Kato09DF} provides a more general result about the divergence of the projection estimator $\hat \bt(\by)$ when $\C$ is a closed convex set with piecewise smooth boundary.} and \citet[Lemma 2]{TT12}, shows that the divergence of the projection estimator $\hat \bt(\by)$ onto a convex polyhedron as described in \eqref{eq:CvxPoly}  can be calculated as the dimension of the affine space that $\hat \bt(\by)$ lies on.}

\begin{proposition}
\label{prop:div_poly}
Suppose that the projection estimator $\hat \bt(\by)$ is defined in~\eqref{eq:Proj} where $\C$ is a convex polyhedron as defined in~\eqref{eq:CvxPoly}. Then the components of $\hat \bt(\by)$ are almost differentiable, and $\nabla \hat \theta_i$ ($i$-th entry of $\nabla \hat \bt(\by)$) is an essentially bounded function, for $i=1,\ldots, n$. Let  $J_\by$ be the set of indices for all the binding constraints of $\hat \bt(\by)$, i.e.,
\begin{eqnarray}\label{eq:J_y}
 J_{\by}:=\{1 \leq i \leq m: \langle \bb_i, \hat{\bt}(\by) \rangle =c_i \}.
\end{eqnarray}
Then, for a.e.~$\by \in \mathbb{R}^n$, there is a neighborhood $U$ of $\by$, such that for every $\bz \in U$,
\begin{eqnarray}\label{eq:proj_aff}
\hat{\bt}(\bz)& = & \argmin_{\bt \in H} \frac{1}{2}\|\bt-\bz\|_2^2
\end{eqnarray}
where $H=\{\bt: B_{J_\by}\bt=\bc_{J_\by}\}$ is an affine space, $J_\by$ is defined in \eqref{eq:J_y} and $B_{J_{\by}}$ is the submatrix of $B$ with rows indexed by $J_{\by}$. As a consequence,
\begin{equation}\label{eq:div_poly}
D(\by)=n- \mathrm{rank}(B_{J_{\by}}), \qquad \qquad \mbox{for a.e.~$\by \in \mathbb{R}^n$},
\end{equation}
Thus, $\mathrm{df}(\hat \bt(\by))= n-\E\left[ \mathrm{rank}(B_{J_{\by}})\right]$.
\end{proposition}
Note that \emph{a.e.} in \eqref{eq:div_poly} stands for ``almost everywhere'', which means that \eqref{eq:div_poly} holds for all $\by$ except on a measure-zero set. %The explicit expression for this measure zero set is provided in our proof
{Note that, by an almost differentiable function $f:\mathbb{R}^n\rightarrow\mathbb{R}$ we mean that $f$ is differentiable everywhere except on a measure-zero set (see~\cite{MW00} for a precise definition); $f$ is essentially bounded if there exists an constant $c$ such that $f^{-1}((c,+\infty))$ is a measure-zero set.}

\section{Main Result}
\label{sec:main}

In this section we consider the estimator $\hbt(\by)$ obtained from the optimization problem \eqref{eq:LSE} with the auxiliary variable $\bxi \in \mathbb{R}^{p}$. %As we discussed in the Introduction, the general from of  \eqref{eq:LSE} finds many statistical applications in shape-restricted regression, generalized Lasso, etc.
When $\lambda=0$ and $\mathbf{d} \neq \mathbf{0}$, the optimization problem \eqref{eq:LSE} may have an unbounded optimal value depending on $\bd$. The following result gives the necessary and sufficient condition for \eqref{eq:LSE} to be bounded.
\begin{lemma}\label{lem:bounded}
	When $\lambda=0$, the optimization problem in~\eqref{eq:LSE} has a bounded optimal value if and only if $-\bd=A^\top\bu$ for some $\bu\geq\mathbf{0}$.
\end{lemma}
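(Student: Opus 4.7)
The key observation is that the quadratic part $\frac12\|\bt-\by\|_2^2$ is bounded below in $\bt$, so any unboundedness of the optimum must come from driving the linear term $\bd^\top\bxi$ to $-\infty$ along a recession direction of the feasible polyhedron that leaves $\bt$ essentially unchanged. Accordingly, I would first characterize recession directions of $\Q$: a direction $(\bv_\bxi,\bv_\bt)\in\R^{p+n}$ is a recession direction iff $A\bv_\bxi+B\bv_\bt\le\mathbf{0}$. Along such a direction, starting from any feasible $(\bxi_0,\bt_0)$, the objective becomes $\frac12\|\bt_0+t\bv_\bt-\by\|_2^2+\bd^\top\bxi_0+t\bd^\top\bv_\bxi$. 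If $\bv_\bt\neq\mathbf{0}$ this grows like $\Theta(t^2)$ as $t\to\infty$, so the objective cannot be driven to $-\infty$. Hence unboundedness is equivalent to the existence of some $\bv\in\R^p$ with $A\bv\le\mathbf{0}$ and $\bd^\top\bv<0$.

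Second, I would apply Farkas' lemma to this reduced question. Farkas' lemma states that exactly one of the following holds: either there exists $\bv\in\R^p$ with $A\bv\le\mathbf{0}$ and $\bd^\top\bv<0$, or there exists $\bla\ge\mathbf{0}$ with $A^\top\bla=-\bd$. Thus boundedness below of \eqref{eq:LSE_mul_cvx_gen} is equivalent to $-\bd=A^\top\bla$ for some $\bla\ge\mathbf{0}$, which is the desired characterization.

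To make both implications completely explicit, I would close with two short constructive arguments. For the ``if'' direction, assuming $-\bd=A^\top\bla$ with $\bla\ge\mathbf{0}$, I would bound the objective at any feasible $(\bxi,\bt)$ by
\begin{equation*}
\tfrac12\|\bt-\by\|_2^2+\bd^\top\bxi=\tfrac12\|\bt-\by\|_2^2-\bla^\top A\bxi\ge\tfrac12\|\bt-\by\|_2^2+\bla^\top B\bt-\bla^\top\bc,
\end{equation*}
using feasibility $A\bxi\le\bc-B\bt$ and $\bla\ge\mathbf{0}$. The right-hand side is a strictly convex quadratic in $\bt$ plus a linear term in $\bt$, and is independent of $\bxi$, so it is bounded below uniformly. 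For the ``only if'' direction, given $\bv$ from Farkas with $A\bv\le\mathbf{0}$ and $\bd^\top\bv<0$, I would take any feasible $(\bxi_0,\bt_0)$ (the problem is assumed feasible so that $\Proj_\bt(\Q)$ is nonempty) and note that $(\bxi_0+t\bv,\bt_0)$ remains feasible for all $t\ge 0$, with objective $\frac12\|\bt_0-\by\|_2^2+\bd^\top\bxi_0+t\bd^\top\bv\to-\infty$ as $t\to\infty$.

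The main subtlety, and the step that deserves the most care, is the reduction from general recession directions to those with $\bv_\bt=\mathbf{0}$: one must be sure the quadratic term genuinely dominates the linear perturbation for every $\bv_\bt\neq\mathbf{0}$ and every fixed $\bv_\bxi$. Once this reduction is in hand, the rest is a textbook application of Farkas' lemma plus a short Lagrangian-type inequality.
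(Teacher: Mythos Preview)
Your proposal is correct and follows essentially the same approach as the paper: Farkas' lemma for the ``only if'' direction (your $\bv$ is the paper's $-\bh$, with the same sign conventions adjusted) and the Lagrangian-type lower bound $\tfrac12\|\bt-\by\|_2^2-\bla^\top A\bxi\ge\tfrac12\|\bt-\by\|_2^2+\bla^\top B\bt-\bla^\top\bc$ for the ``if'' direction. The paper's proof simply omits your preliminary discussion of recession directions with $\bv_\bt\neq\mathbf{0}$, applying Farkas directly; your extra paragraph is harmless but not needed.
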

The proof of Lemma \ref{lem:bounded} is based on Farkas's lemma (see e.g.,~\citet[Corollary 22.3.1]{Rockafellar70book}) and is provided in Section~\ref{sec:proof_bounded} of the supplementary material. Based on the above lemma, for the rest of the paper, we will assume that $-\bd=A^\top\bu$ for some $\bu\geq\mathbf{0}$ so that \eqref{eq:LSE} is bounded. When $\bd=\mathbf{0}$ such an assumption trivially holds for $\bu=\mathbf{0}$. For applications with $\bd \neq \mathbf{0}$, e.g., additive model, generalized Lasso, and $\ell_\infty$-regularized group Lasso, we will show that this assumption always holds.

The divergence of $\hbt(\by)$, as the solution~\eqref{eq:LSE}, is characterized by the following theorem, which is the main result of the paper.
\begin{theorem}\label{thm:div}
	Suppose that $-\bd=A^\top\bu$ for some $\bu\geq\mathbf{0}$ whenever $\lambda=0$ in \eqref{eq:LSE}. For any
	$\by \in \R^n$, let $(\hbt(\by), \hbxi(\by)) $ be any solution for \eqref{eq:LSE} and let
	\begin{eqnarray}\label{eq:J_y_2}
	J_{\by}:=\{1 \leq i \leq m: \langle \ba_i, \hat{\bxi}(\by) \rangle +  \langle \bb_i, \hat{\bt}(\by) \rangle =c_i \},
	\end{eqnarray}
	and $A_{J_\by}$ and $B_{J_\by}$ be  the submatrices of $A$ and $B$ with rows in the set $J_\by$. Let $I_{\by} \subseteq J_\by$ be the index set of maximal independent rows of the matrix $[A_{J_\by}, B_{J_\by}]$, i.e., the set of vectors $\{[\ba_i^\top, \bb_i^\top], i \in I_\by\}$ are linearly independent.
	Then, the following statements hold:
	\begin{enumerate}
		\item[(i)] The optimal solution $(\hbt(\by), \hbxi(\by))$ of~\eqref{eq:LSE} has unique components $\hbt(\by)$. The components of $\hat \bt(\by)$ are almost differentiable in $\by$ and  $\nabla \hat \theta_i(\by)$  is an essentially bounded function for each $i=1,\ldots, n$.
		\item[(ii)] For a.e. $\by$,
		{\small \begin{equation}\label{eq:LSE_div}
			D(\by)=\begin{cases} n - \mathrm{trace}\left(B_{I_\by}^\top \left(B_{I_\by}B_{I_\by}^\top+ \frac{1}{\lambda} A_{I_\by} A_{I_\by}^\top\right)^{-1}B_{I_\by} \right), &\text{ if }\lambda>0,\\
			n- |I_\by| + \mathrm{rank}(A_{I_\by}),&\text{ if }\lambda=0,
			\end{cases}
			\end{equation}}
		and $\df(\hat\bt(\by))=\E[D(\by)]$ (note that the index set $I_\by$ is random).
	\end{enumerate}
\end{theorem}
First note that any solution $(\hbt(\by), \hbxi(\by)) $ of~\eqref{eq:LSE} depends on $\bd$ and so do $J_\by$ and $I_\by$. Hence, $D(\by)$ given in~\eqref{eq:LSE_div} depends on $\bd$ implicitly. To simplify notation, we suppress the dependence of $J_\by$, $I_\by$ and $D(\by)$ on $\bd$. {The divergence in \eqref{eq:LSE_div} holds for any given $\mathbf{d} \in \R^p$ and for every $\by \in \mathbb{R}^n$ expect for a measure-zero set in $\mathbb{R}^n$. The explicit form of this measure zero set is provided in our proof (see~\eqref{eq:y_bd_lem2} in the supplementary file for the case $\lambda=0$  and~\eqref{eq:y_bd_lem3} when $\lambda>0$).}

We also note that when $\lambda>0$, $B_{I_\by}B_{I_\by}^\top+ \frac{1}{\lambda} A_{I_\by} A_{I_\by}^\top$ is invertible. To see this observe that, from the definition of $I_{\by}$, the rows of $V := [\frac{1}{\sqrt{\lambda}}A_{I_\by}, B_{I_\by}]$ are linearly independent. Therefore, $B_{I_\by}B_{I_\by}^\top+ \frac{1}{\lambda} A_{I_\by} A_{I_\by}^\top
= V V^\top$
%{\small $
%B_{I_\by}B_{I_\by}^\top+ \frac{1}{\lambda} A_{I_\by} A_{I_\by}^\top
%=\left[\frac{1}{\sqrt{\lambda}}A_{I_\by}, B_{I_\by}\right]
%\left[
%\begin{array}{c}
%\frac{1}{\sqrt{\lambda}}A_{I_\by}^\top\\
%B_{I_\by}^\top
%\end{array}
%\right]
%$}
is invertible.
Further, as a simple sanity check of Theorem \ref{thm:div}, we show in Lemma~\ref{lem:SanityCheck} (see Section~\ref{sec:SanityCheck} of the supplementary file) that $D(\by)$, as defined in~\eqref{eq:LSE_div}, is always nonnegative.
A few important remarks are in order now.
{
\begin{remark}\label{rem:nontrivial}
\emph{
%When $\lambda>0$, the divergence of $\hbt(\by)$ as a component of the solution~\eqref{eq:LSE} can be also characterized using Lemma \ref{lem:aff_proj} and Proposition \ref{prop:div_poly}.
When $\lambda>0$,  we can define $\bd_{\lambda}:=\frac{-\bd}{\sqrt{\lambda}}$ and can reformulate~\eqref{eq:LSE} as a projection problem
\begin{eqnarray}\label{eq:reform_Lip_multi_conv_lambdanonzero}
(\hbt(\by,\bd_{\lambda}), \hat{\bg}(\by,\bd_{\lambda})) & = &  \argmin_{\bt, \bg}\frac{1}{2} \left\|(\bt,\bg)-(\by,\bd_{\lambda})\right\|_2^2\\
& \text{s.t.} &   \frac{1}{\sqrt{\lambda}}A\bg + B\bt \leq \bc. \nonumber
\end{eqnarray}
It is easy to verify that  $\hat{\bg}=\sqrt{\lambda}~\hat{\bxi}$ and that \eqref{eq:reform_Lip_multi_conv_lambdanonzero} is just an instance of \eqref{eq:Proj} in $\mathbb{R}^{p+n}$ by viewing $(\hbt,\hat{\bg})$, $(\by,\bd_{\lambda})$ and the feasible domain $\{(\bt,\bg) \in \R^{p+n} : \frac{1}{\sqrt{\lambda}}A \bg + B \bt \leq \bc\}$ in \eqref{eq:reform_Lip_multi_conv_lambdanonzero} as $\hbt$, $\by$ and $\mathcal{C}$ in \eqref{eq:Proj}, respectively. Hence, by applying Proposition \ref{prop:div_poly} to \eqref{eq:reform_Lip_multi_conv_lambdanonzero},  we can show that, for a.e.~$(\by,\bd_{\lambda}) \in \mathbb{R}^{p+n}$, there is a neighborhood $U$ of $(\by,\bd_{\lambda})$, such that for every $(\bz,\bb) \in U$,  the solution $(\hbt(\bz,\bb), \hat{\bg}(\bz,\bb))$ defined in \eqref{eq:reform_Lip_multi_conv_lambdanonzero} is the projection of $(\bz,\bb)$ to the affine space $\{(\bt, \bg):  \frac{1}{\sqrt{\lambda}}A_{I_\by}\bg + B_{I_\by}\bt = \bc_{I_\by} \}$ with $I_\by$ defined the same as in Theorem~\ref{thm:div}. In other words, for every $(\bz,\bb) \in U$,
$$
\left[
\begin{array}{c}
\hbt(\bz,\bb)\\
\hat{\bg}(\bz,\bb)
\end{array}
\right]
=(I-P)
\left[
\begin{array}{c}
\bz\\
\bb
\end{array}
\right],~
\text{ where } P=\left[
	\begin{array}{c}
	B_{I_\by}^\top\\
	\frac{1}{\sqrt{\lambda}}A_{I_\by}^\top
	\end{array}
	\right] \left(B_{I_\by}B_{I_\by}^\top+ \frac{1}{\lambda} A_{I_\by} A_{I_\by}^\top\right)^{-1}\left[B_{I_\by},\frac{1}{\sqrt{\lambda}}A_{I_\by}\right].
$$
Therefore, for a.e.~$(\by,\bd_{\lambda}) \in \mathbb{R}^{p+n}$, the matrix $I-P$ is the Jacobian matrix of $(\hbt(\by,\bd_{\lambda}), \hat{\bg}(\by,\bd_{\lambda}))$ and
we obtain \eqref{eq:LSE_div} for $\lambda>0$ by taking the trace of the $n \times n$ top-left block of $I-P$.
%corresponding to $\hbt(\bz)$ and $\bz$.
}

\emph{
Unfortunately, this argument cannot serve as a proof for Theorem~\ref{thm:div} when $\lambda>0$ as the above argument only holds for almost every $(\by,\bd_{\lambda})$ in $\mathbb{R}^{p+n}$ but \emph{not necessarily} for almost every $\by$ in $\mathbb{R}^n$ for a given $\bd_{\lambda}$. This is because the projection of a zero-measure set  in $\mathbb{R}^{p+n}$ (i.e., the set of $(\by,\bd_{\lambda})$'s) onto the space of $\by$ is not necessarily a zero-measure set in $\mathbb{R}^n$. But our main result in Theorem~\ref{thm:div} shows that \eqref{eq:LSE_div} holds for almost every $\by \in \mathbb{R}^n$ and any given $\bd_{\lambda}\in \mathbb{R}^p$. In Section \ref{sec:supp_nontrivial} in the supplementary material, we present a concrete example which shows that the  entire set of $(\by,\bd_{\lambda})$ with a given $\bd_{\lambda}$ falls into the measure-zero part on which the previous results from  \cite{Kato09DF} and \cite{TT12} fail.}
%the divergence of $\hbt(\by)$, which is a block of the divergence of $(\hbt(\by), \hat{\bg}(\by))$ in \eqref{eq:reform_Lip_multi_conv_lambdanonzero}, can be characterized using Proposition \ref{prop:div_poly}.
%we can apply Lemma \ref{lem:aff_proj} and Proposition \ref{prop:div_poly} to \eqref{eq:reform_Lip_multi_conv_lambdanonzero} and obtain the following corollary.
\end{remark}
}

\begin{remark}\label{rem:nontrivialzer0}
	\emph{
		When $\lambda=0$,
		%we can view $\hbt(\by)$ as a proximal mapping of $\by$ by reformulating Eq. \eqref{eq:LSE}.
		%More specifically, if we fix $\bt$, the optimization over $\bxi$  in \eqref{eq:LSE} is the linear program $\min\limits_{\bxi}\bd^\top\bxi\text{ s.t. }A\bxi  \leq \bc-B\bt$.
		using the strong duality of linear programming, we can reformulate \eqref{eq:LSE} and $\hbt(\by)$ as follows:
		%as a proximal mapping (Definition 1.22 in \cite{Rockafellar:11}) of $\by$
	}
	\begin{eqnarray}\label{eq:proximalmapping}
	\hbt(\by) & \in &  \argmin_{\bt}\frac{1}{2} \|\bt-\by\|_2^2 +g(\bt),
	\end{eqnarray}
	\emph{where $g(\bt)$ is a piece-wise linear convex function:}
	\begin{eqnarray}\label{eq:prox_term}
	g(\bt)&:=&\left\{
	\begin{array}{ll}
	\min\limits_{\bxi}\bd^\top\bxi	\text{ s.t. }A\bxi \leq \bc-B\bt &\text{ if  }\{\bxi|A\bxi \leq \bc-B\bt\}\neq\emptyset\\
	+\infty &\text{ if }\{\bxi|A\bxi \leq \bc-B\bt\}=\emptyset.
	\end{array}
	\right. \\
	&=&\left\{
	\begin{array}{ll}
	\max\limits_{\bu}(B\bt-\bc)^\top\bu	\text{ s.t. }A^\top\bu =-\bd, \bu\geq\mathbf{0} &\text{ if  }\{\bxi|A\bxi \leq \bc-B\bt\}\neq\emptyset\\
	+\infty &\text{ if }\{\bxi|A\bxi \leq \bc-B\bt\}=\emptyset.
	\end{array}
	\right.\nonumber
	\end{eqnarray}
	\emph{
		The formulation \eqref{eq:proximalmapping} means that $\hbt(\by)$ is the proximal mapping of $\by$ with a proximal term $g$ (Definition 1.22 in \cite{Rockafellar:11}). We note that Exercise 13.45 from \cite{Rockafellar:11} characterizes the generalized Jacobian of a proximal mapping, which can be a potential tool to derive $D(\by)$. However, due to the complicated form of the proximal term $g$ in \eqref{eq:prox_term}, it is not easy to directly apply their result to derive the explicit expression of the divergence in our Theorem \ref{thm:div}, and it requires to first introduce many new notions (e.g., second order generalized derivative for nonsmooth functions and graphical derivative) in variational analysis. On the other hand, our proof for the case of $\lambda = 0$ is more elementary and more consistent with the proof when $\lambda > 0$ --- both of them are based on a general local projection lemma (see Lemma \ref{lem:div} below).
	}
\end{remark}

\begin{remark}\label{rem:compute_Iy}
\emph{
The computation of the index set $J_\by$ is straightforward. Given a solution $\hat{\bxi}(\by)$ and  $\hat{\bt}(\by)$ from an optimization solver, we could easily check if $\langle \ba_i, \hat{\bxi}(\by) \rangle +  \langle \bb_i, \hat{\bt}(\by) \rangle$ equals $c_i$, for each $1 \leq i \leq m$. After obtaining $J_\by$, the index set $I_{\by}$ of maximal independent rows can be found by removing all the rows of  $[A_{J_\by}, B_{J_\by}]$ whose removal does not change the rank of the original matrix $[A_{J_\by}, B_{J_\by}]$. In particular, we start with an index set  $K=J_\by$. For each row index $k\in K$, if the rank of $[A_{K\backslash\{k\}}, B_{K\backslash\{k\}}]$ is the same as that of $[A_{K}, B_{K}]$, we remove $k$ from $K$. (Note that the rank can be computed easily by singular value decomposition or by directly applying the \emph{rank} function in Matlab or \emph{rankMatrix} function in R.) We repeat this procedure until no additional index in $K$ can be removed without reducing the rank of the matrix. The obtained index set $K$ is $I_\by$.
}
\end{remark}
\begin{remark}\label{rem:multiple}
\emph{
{When $\lambda=0$, it is possible that there exist multiple $\hbxi(\by)$'s satisfying \eqref{eq:LSE} and they correspond to different $J_{\by}$'s and $I_{\by}$'s; while when $\lambda>0$, $\hbxi(\by)$ is unique.} Even if $\hbxi(\by)$ and $J_{\by}$ are unique, there can still exist multiple maximal independent sets $I_{\by}$. However, according to our proof, for any given  $\hbxi(\by)$, $J_{\by}$ and $I_{\by}$, we show that $D(\by)$ equals the quantity on  the right hand side of \eqref{eq:LSE_div}. Note that $D(\by)$ is well-defined (see its definition in \eqref{eq:Div}), unique and does not depend on the choice of $\hbxi(\by)$,  $J_{\by}$ and $I_{\by}$.} %{\red Therefore, different maximal independent sets $I_\by$ will lead to the same value of  the right hand side of \eqref{eq:LSE_div} so that the computed divergence is unique.}
%However, according to our proof, Theorem \ref{thm:div} (in particular, \eqref{eq:LSE_div}) holds for any $J_\by$ and $I_{\by}$. Moreover, Theorem \ref{thm:div} will guarantee that different  maximal independent sets $I_{\by}$ must lead to the same value of  the left hand side of \eqref{eq:LSE_div} since $D(\by)$ (see its definition in \eqref{eq:Div}) is well-defined, unique and does not depend on the choice of $\hbxi(\by)$,  $J_{\by}$ and $I_{\by}$.
\end{remark}

The key tool to proving Theorem \ref{thm:div} is to establish the following lemma, which shows that for a.e.~$\by$, the solution of~\eqref{eq:LSE} is locally an affine projection \emph{with linear and quadratic perturbations}.
\begin{lemma}\label{lem:div}%\label{lem:div_mul_cvx_gen}
Suppose that $-\bd=A^\top\bu$ for some $\bu\geq\mathbf{0}$ whenever $\lambda=0$ in \eqref{eq:LSE}.  For any
  $\by \in \R^n$, let $(\hbt(\by), \hbxi(\by)) $ be any solution of~\eqref{eq:LSE} and let
  the index set $J_\by$ be as defined in \eqref{eq:J_y_2}. For a.e.~$\by \in \R^n$,
  \begin{eqnarray}\label{eq:eq_div_mul_cvx_gen}
    \hbt(\bz) = \widetilde{\bt}(\bz), \text{ for any }\bz\text{ in a neighborhood }U\text{ of }\by,
  \end{eqnarray}
  where $\widetilde{\bt}(\bz)$ is defined as the unique $\bt$-component of the optimal solution of the following optimization problem:
  \begin{eqnarray}\label{eq:aff_LSE}
    (\widetilde{\bt}(\bz),\widetilde{\bxi}(\bz))  & \in & \argmin_{\bt, \bxi}\frac{1}{2} \|\bt-\bz\|_2^2 +\bd^\top\bxi+ \frac{\lambda}{2}\|\bxi\|_2^2\\
      & &\;\;\mathrm{s.t.} \; A_{J_\by} \bxi + B_{J_\by} \bt = \bc_{J_\by}. \nonumber
  \end{eqnarray}
  %where $I_\by$ is the index set defined in Theorem \ref{thm:div_mul_cvx_gen}.
\end{lemma}

A rigorous proof of this lemma involves technical arguments from convex analysis, which will be presented in Section \ref{sec:proof_local_lemma} of the supplement.  %It is also worthwhile to note that it is easier to understand the ideas behind our proof via some geometric intuitions; see the graphical illustrations presented in Section \ref{sec:geo} of the supplementary file.
The proof of Theorem \ref{thm:div}, based on Lemma \ref{lem:div}, will be provided in Section \ref{sec:supp_theorem_div} of the supplementary file.

%With the key lemma (Lemma \ref{lem:div}), we are ready to give the proof of Theorem~\ref{thm:div}.

\section{DF of (Penalized) Convex Regression}
\label{sec:DFCvxReg}

One important application of Theorem \ref{thm:div} is in characterizing DF for the LSE in \emph{multivariate convex regression} (see e.g.,~\citet{SS11}). In particular, consider the nonparametric regression problem in~\eqref{eq:RegMdl2}  where $f:\R^d \to \R$ ($d > 1$) is a convex function and $\X := \{\bx_1,\ldots, \bx_n\}$ is the set of design points (with $n$ distinct elements) in $\R^d$. The goal is to estimate $\bt^* = (f(\bx_1),\ldots, f(\bx_n))$. Let $\mathcal{K}_{\text{conv}}$ be the set of all vector $\bt=(\theta_1, \ldots, \theta_n) \in \mathbb{R}^n$ for which there exists a convex function $\psi: \mathbb{R}^d \rightarrow \mathbb{R}$ such that $\psi(\bx_i)=\theta_i$ for $i=1,\ldots, n$. It can be shown that $\mathcal{K}_{\text{conv}}$ is a convex cone (see Lemma 2.3 of~\citet{SS11}). The multivariate convex LSE is defined as $\hat \bt(\by) :=\argmin_{\bt \in \mathcal{K}_{\text{conv}}} \frac{1}{2}\|\bt-\by\|_2^2$. In fact,  Lemma 2.2 from \citet{SS11} provides the following explicit characterization of $\mathcal{K}_{\text{conv}}$.
\begin{lemma}[\citet{SS11}]
\label{lem:dual_characterization}
For a vector $\bt \in \R^n$, we have $\bt \in \mathcal{K}_{\text{conv}}$ if and only if there exists a set of $n$ $d$-dimensional vectors $\bxi_1, \ldots, \bxi_n \in \R^d$ such that the following inequalities hold simultaneously:
\begin{eqnarray}\label{eq:mul_cvx_dual}
  \langle \bxi_j,  \bx_k- \bx_j \rangle \leq \theta_k - \theta_j, \;\;\;\mathrm{ for \;all } \;  j \ne k \in \{1, \ldots, n\}.
\end{eqnarray}
\end{lemma}
Lemma~\ref{lem:dual_characterization} is quite intuitive: since $f$ is a multivariate convex function, we have for any pair $\bx_k, \bx_j \in \X$,
\begin{equation}\label{eq:subgrad_convexity}
  f(\bx_k) - f(\bx_j) \geq \langle g(\bx_j), \bx_k - \bx_j \rangle,
\end{equation}
where $g(\bx_j) \in \partial f(\bx_j)$ is a subgradient of the convex function $f$ at $\bx_j$. Letting $\bxi_j = g(\bx_j)$, one can easily see the equivalence between \eqref{eq:subgrad_convexity} and   \eqref{eq:mul_cvx_dual}. Using Lemma \ref{lem:dual_characterization}, the LSE of multivariate convex regression can be formulated as the following optimization problem (see, e.g., \citet{K08}, \citet{SS11}, \citet{HD11} and \citet{LG12}):
{\small \begin{align}\label{eq:multi_cvx}
  (\hat \bt(\by), \hat \bxi(\by))  & =   \argmin_{\substack{\bt \in \R^n \\ \bxi=[\bxi_1^\top, \ldots, \bxi_n^\top]^\top \in \R^{nd}}} \frac{1}{2} \|\bt-\by\|_2^2  \\
                           &   \qquad \text{s.t.}  \quad \langle \bxi_j, \bx_k- \bx_j \rangle \leq \theta_k - \theta_j, \;\; \forall \;  j \ne k \in \{1, \ldots, n\}, \nonumber
\end{align}}
which is a standard linearly constrained quadratic program and can be solved by many off-the-shelf solvers (e.g., SDPT3 \citep{SDPT3}). Next we show that the above optimization problem can be reformulated as a special case of \eqref{eq:LSE} with properly chosen $A$, $B$ and $\bc=\mathbf{0}$, $\bd=\mathbf{0}$~and~$\lambda=0$.
\begin{proposition}\label{prop:multi_cvx_reform}
The optimization problem for multivariate convex  regression in \eqref{eq:multi_cvx} can be formulated as \eqref{eq:LSE_mul_cvx} with $p=nd$ and $\bxi=[\bxi_1^\top, \ldots, \bxi_n^\top]^\top \in \R^{nd}$. In this scenario, $A$ in \eqref{eq:LSE_mul_cvx} is a $[n(n-1)]\times nd$ matrix and each row of $A$ is indexed by a pair $r=(j,k)$ with $j \ne k \in \{1, \ldots, n\}$ and each column is indexed by a pair $c=(j',s)$ with $j'\in \{1, \ldots, n\}$ and $s\in \{1, \ldots, d\}$. Moreover, we partition $A$ into $[n(n-1)]\times n$ blocks with each block of size $1\times d$. %and each block
Let $A_{r,j'}$ be the block of $A$ with row $r=(j,k)$ and column $j' \in \{1,\ldots, n\}$. $A_{r,j'}$ is defined as $A_{r,j'} = \bx_k^\top-\bx_j^\top$ if $j=j'$ and $A_{r,j'} = \mathbf{0}^\top$ if $j\neq j'$.
%\begin{eqnarray*}
%A_{r,j'} :=\left\{
%\begin{array}{ll}
%\bx_k^\top-\bx_j^\top&\text{ if } j=j',\\
%\mathbf{0}^\top&\text{ if } j\neq j'.
%\end{array}\right.
%\end{eqnarray*}
The corresponding $B$ is a $[n(n-1)]\times n$ matrix and each row of $B$ is indexed by a pair $r=(j,k)$ with $j \ne k \in \{1, \ldots, n\}$ and each column is indexed by $c\in \{1, \ldots, n\}$. Let $B_{r,c}$ be the entry in row $r =(j,k)$ and column $c$ of the matrix $B$ defined as $B_{r,c} = 1$ if $c=j$, $B_{r,c} = -1$ if $c=k$, and $B_{r,c} =
0$ otherwise.
%\begin{eqnarray*}
%B_{r,c}:=\left\{
%\begin{array}{ll}
%1&\text{ if } c=j,\\
%-1&\text{ if } c=k,\\
%0&\text{ if } c\neq j,c\neq k.
%\end{array}\right.
%\end{eqnarray*}
The corresponding $\bc$ will be an all-zero vector in $\mathbb{R}^{n(n-1)}$.
\end{proposition}
The proof of Proposition \ref{prop:multi_cvx_reform} is  straightforward and thus omitted. Given the matrices
$A$ and $B$ defined in Proposition \ref{prop:multi_cvx_reform}, one can define the corresponding polyhedron $\Q$ of $(\bxi, \bt)$  in \eqref{eq:Q} and it is clear that $\mathcal{K}_{\text{conv}}=\Proj_{\bt}(\Q)$, which is a projected convex polyhedron. Given Proposition \ref{prop:multi_cvx_reform}, it is straightforward to apply Theorem~\ref{thm:div} (with $\bd=\mathbf{0}$ and $\lambda=0$) to calculate the DF of the LSE for multivariate convex regression.
\begin{corollary}\label{corr:multi_cvx_reform}
For multivariate convex LSE in \eqref{eq:multi_cvx}, let the set of tight constraints be $J_{\by}:=\{(j,k): \langle \hbxi_j, \bx_k- \bx_j \rangle =\widehat{\theta}_k - \widehat{\theta}_j\}$. Let $I_{\by} \subseteq J_\by$ be the index set of maximal independent rows of the matrix $[A_{J_{\by}}, B_{J_{\by}}]$, where $A$ and $B$ are defined in Proposition \ref{prop:multi_cvx_reform}. Then for a.e. $\by$, we have
$
  D(\by)= n- |I_\by| + \mathrm{rank}(A_{I_\by})$  and  $\mathrm{df} (\hbt(\by))= n-  \E[|I_\by|] + \E\left[\mathrm{rank}(A_{I_\by})\right]$.
\end{corollary}

The multivariate convex LSE described in~\eqref{eq:multi_cvx} tends to overfit the data, especially near the boundary of the convex hull of the design points --- the subgradients take large values near the boundary. Thus, we might want to regularize the convex LSE.  A natural way to achieve this is to impose bounds on the norm of the subgradients; see e.g.,~\cite{SM13},~\cite{Lim14}. In the penalized form this would lead to the following problem:
\begin{eqnarray}\label{eq:LSE_Lip}
  (\hbt(\by), \hbxi(\by)) & = &  \argmin_{\substack{\bt \in \R^n \\ \bxi=[\bxi_1^\top, \ldots, \bxi_n^\top]^\top \in \R^{nd}}} \frac{1}{2} \|\bt-\by\|_2^2 + \frac{\lambda}{2}\sum_{j=1}^n \|\bxi_j\|_2^2\\
            & & \;\; \text{s.t.} \;    \langle \bxi_j,  \bx_k- \bx_j \rangle \leq \theta_k - \theta_j \quad \forall \; j \neq k, \nonumber
\end{eqnarray}
which can be formulated as~\eqref{eq:LSE_Lip_multi_conv} with $p=nd$ and $\bxi=[\bxi_1^\top, \ldots, \bxi_n^\top]^\top \in \R^{nd}$, where $A$, $B$ and $\bc$ are defined in Proposition \ref{prop:multi_cvx_reform}.
The divergence of the penalized convex regression estimator $\hbt(\by)$ in \eqref{eq:LSE_Lip} can be easily  characterized by Theorem \ref{thm:div} (with $\bd=\mathbf{0}$ and $\lambda>0$).
\begin{corollary}
\label{thm:div_mul_cvx_LSE}
  For the penalized multivariate convex LSE described in~\eqref{eq:LSE_Lip}, let the set of tight constraints be $J_{\by}:=\{(j,k): \langle \hbxi_j, \bx_k- \bx_j \rangle =\widehat{\theta}_k - \widehat{\theta}_j\}$. Let $I_{\by} \subseteq J_\by$ be the index set of maximal independent rows of the matrix $[A_{J_{\by}}, B_{J_{\by}}]$, where $A$ and $B$ are defined in Proposition \ref{prop:multi_cvx_reform}. Then for a.e. $\by$, we have
$
D(\by)=n - \mathrm{trace}\left(B_{I_\by}^\top \left(B_{I_\by}B_{I_\by}^\top+ \frac{1}{\lambda} A_{I_\by} A_{I_\by}^\top\right)^{-1}B_{I_\by} \right)$ and $\mathrm{df} (\hbt(\by))= \E [D(\by)]$.
\end{corollary}

\section{DF of (Bounded) Isotonic Regression}\label{sec:DFIso}

Let us consider isotonic regression on a general partially ordered set;  see e.g.,~\citet[Chapter 1]{RWD88}.  Let $\X := \{x_1,\ldots, x_n\}$ be a set (with $n$ distinct elements) in a metric space with a {\it partial order}, i.e., there exists a binary relation $\lesssim$ over $\X$ that is reflexive ($x \lesssim x$ for all $x \in \X$), transitive ($u, v, w \in \X, \; u \lesssim v$ and $v \lesssim w$ imply $u \lesssim w$), and antisymmetric ($u , v  \in \X, \; u \lesssim v$ and $v \lesssim u$ imply $u  = v$). Consider~\eqref{eq:RegMdl2} where now the real-valued function $f$ is assumed to be {\it isotonic} with respect to the partial order $\lesssim$, i.e., any pair $u,v  \in \X$, $ u \lesssim v$ implies $f(u) \le f(v)$. This model can be expressed in the sequence form as~\eqref{eq:SeqMdl} by letting $\theta^*_i=f(x_i)$ for $i=1, \ldots, n$. To construct the LSE in this problem, we add \emph{isotonic} constraints on $\bt$, which are of the form $\theta_i \le \theta_j$ if $x_i \lesssim x_j$, for some $i,j \in \{1,\ldots, n\}$. As a special case, let us consider $\X \subset \mathbb{R}$ for the univariate isotonic regression. Assuming without loss of generality that  $x_1 \leq x_2 \leq \cdots \leq x_n$,  the isotonic constraint set on $\bt$ takes the form of the isotonic cone $\mathcal{M}$ (see~\eqref{eq:IsoReg}) and the LSE  is the projection $\hbt(\by)$ of $\by$ onto $\mathcal{M}$. For the ease of illustration, the isotonic constraints can be represented by an acyclic directed  graph $\widetilde{G}=(V, \widetilde{E})$  where $V=\{1, \ldots, n\}$ (corresponding to $\{\theta_i\}_{i=1}^n$) and the set of the directed edges is denoted by
\begin{equation}\label{eq:tilde_E}
\widetilde{E}=\{(i,j):  x_i \lesssim x_j\}.
\end{equation}
For the univariate isotonic cone $\mathcal{M}$, the edge set $\tilde{E}$ contains $n-1$ edges, where the $i$-th edge runs from node $\theta_i$ to $\theta_{i+1}$ for $i=1, \ldots, n-1$, i.e., $\tilde{E}=\{(i,{i+1}): i=1, \ldots, n-1\}$.

It is well-known that the projection $\hbt(\by)$ of $\by$ onto the isotonic constraint set suffers from the {\it spiking effect}, i.e., over-fitting near the boundary of the convex hull of the predictor(s) (see~\citet{Pal08} and~\citet{WS93}). However such monotonic relationships among variables arise naturally in many applications and this has lead to a recent surge of interest in regularized isotonic regression; see e.g.,~\citet{LRS12}, \citet{LR14}, and \citet{WMO15}. Probably the most natural form of regularization involves constraining the range of $\hbt(\by)$, i.e., $\max_i \hat{\theta}_i - \min_i \hat{\theta}_i$; this leads to {\it bounded isotonic regression}. More specifically, when the range of $f$ is known to be bounded (from above) by some $\gamma \geq 0$, we can impose this boundedness restriction of $f$ by adding the boundedness constraints and the corresponding bounded isotonic LSE can be defined as follows.

\begin{definition}\label{def:bounded_iso}
The bounded isotonic LSE (with boundedness parameter $\gamma$) is defined as the projection estimator $\hat \bt_{\gamma}(\by):=\argmin_{\btheta \in \C} \| \by - \btheta\|_2^2$,
where the constraint set is
{\begin{eqnarray}\label{eq:partial_iso_bounded}
\C :=  \Big\{\btheta \in \R^n: \theta_i \leq \theta_j \; \forall \, (i,j)\in \widetilde{E}, \; \theta_i\leq \theta_j+\gamma, i\in \max(V),j\in \min(V),i\neq j \Big\}.
\end{eqnarray}}
Here, $\max(V)$ and $\min(V)$ are the maximal and minimal sets of $V$ with respect to this partial order:
\[
\max(V)=\{i\in V: \widetilde{n}^+(i)=\emptyset\}  \quad \mbox{and} \quad \min(V)=\{i\in V: \widetilde{n}^-(i)=\emptyset\},
\]
where for any node $i$, $\widetilde{n}^+(i):=\{j \in V: (i,j)\in \widetilde{E} \}$ is the set of elements that are ``greater than $i$" with respect to the partial order (i.e., successors of $i$), and $\widetilde{n}^-(i):=\{j \in V: (j,i)\in \widetilde{E} \}$  is the set of elements that are ``smaller than $i$" (i.e., predecessors of $i$).
\end{definition}

In Definition~\ref{def:bounded_iso},  both $\max(V)$ and $\min(V)$ must be nonempty for any nonempty partially ordered set. This is because  $\widetilde{G}$ is an acyclic directed graph where there always exist nodes with no successor and nodes with no predecessor. We also note that $\max(V)$ and $\min(V)$ might overlap, for example, when there exist nodes that cannot be compared with any other nodes under the given partial order. For each $i\in \max(V)$ and $j\in \min(V)$ with $i\neq j$, we add a constraint $\theta_i\leq \theta_j+\gamma$ to impose the boundedness restriction on the range of $f$.

Similar to the unbounded case, we can represent the constraints in \eqref{eq:partial_iso_bounded} by a graph $G=(V, E)$ where $V=\{1,\ldots, n\}$ and
\[
E :=\widetilde{E} \cup \{(i,j): i \in \max(V), j \in \min(V), i \neq j\}.
\]
As a special case, for univariate bounded isotonic regression, the constraint set $\C$ in \eqref{eq:partial_iso_bounded} becomes
$\{\btheta \in \R^n: \theta_1 \le \cdots \le \theta_n,  \;\theta_n - \theta_1 \le \gamma \}$ and the corresponding edge set is  $E=\{(i,{i+1}), i=1, \ldots, n-1\} \cup \{(n,1)\}$.

To compute the DF of bounded isotonic LSE $\hat \bt_{\gamma}(\by)$,  first notice that the set $\C$ can be easily represented as a convex polyhedron of the form in \eqref{eq:CvxPoly}. We note that as compared to unbounded isotonic regression, the $\C$ in \eqref{eq:partial_iso_bounded} is a convex polyhedron rather than a polyhedral cone due to the additional boundedness constraints. Given the fact that bounded isotonic LSE is a projection estimator onto a convex polyhedron, Theorem \ref{thm:div} (with $\bd=\mathbf{0}$, $\lambda=0$ and $A=0$) can be used to compute its DF. Instead of directly applying Theorem \ref{thm:div} in its original form, we draw some interesting connections to graph theory, which also leads to a faster computation of the divergence. In particular,  let $\omega(G)$  denote the number of connected components of the undirected version of the graph $G=(V, E)$ (removing the directions of edges in $G$), i.e., the number of maximal connected subgraphs of $G$. The divergence of $\hat \bt_{\gamma}(\by)$ can be characterized using the number of connected components of a subgraph of $G$ as shown in the following proposition (see the proof in Section \ref{sec:proof_uni_bound_iso} in the supplement).

\begin{proposition}\label{prop:uni_bound_iso}
The bounded isotonic constraint set $\C$ defined in~\eqref{eq:partial_iso_bounded} is a convex polyhedron in the form of \eqref{eq:CvxPoly} where $m=|E|$ and $B\in\mathbb{R}^{|E|\times n}$ is defined as (the rows of $B$ are indexed by the edge set) $\vspace{-0.05in}$
\begin{eqnarray}\label{eq:A_iso_bounded_partial_order}
B_{e,i}=\left\{\begin{array}{ll}
1&\text{ if }e=(i,j)\in E\text{ for some } j\neq i\\
-1&\text{ if }e=(j,i)\in E\text{ for some } j\neq i\\
0&\text{ otherwise } \vspace{-0.05in}
\end{array}
\right.
\end{eqnarray}
and $\bc = (c_e)_{e=1}^{|E|} \in\mathbb{R}^{|E|}$ is defined as
\begin{eqnarray}\label{eq:b_iso_bounded_partial_order}
c_e=\left\{\begin{array}{ll}
\gamma&\text{ if }e=(i,j)\in E\text{ for } i \in \max(V), j \in \min(V)\\
0&\text{ otherwise}.
\end{array}
\right.
\end{eqnarray}
%And the bounded isotonic regression estimator $\hbt_{\gamma}(\by)=P_\C(\by)$ (see \eqref{eq:Proj}).
Let $B_e$ be the $e$-th row of $B$ and $J_\by:=\{e\in E: B_e \hat \bt_\gamma(\by) = c_e\}$. Further,  let $G_{J_\by}$  be the subgraph of $G$ with the edge set $J_\by$. The divergence of $\hat \bt_\gamma (\by)$ is  the number of connected components of $G_{J_\by}$ for a.e. $\by$, i.e.,
  $D(\by)=\omega(G_{J_\by})$, and therefore $\df(\hat \bt_\gamma(\by))= \E[\omega(G_{J_\by})]$.
\end{proposition}

%\begin{proof}[Proof of Proposition \ref{prop:uni_bound_iso}]
%One key observation is that the matrix $B$  used to define the bounded isotonic constraint set $\C$ in \eqref{eq:A_iso_bounded_partial_order} is the \emph{incidence matrix} of the graph $G$.  Recall that the incidence matrix of a directed graph has one column corresponding to each node  and one row for each edge. If an edge runs from  node $i$ to node $j$, the row corresponding to that edge has $+1$ in column $i$ and $-1$ in column $j$. And it is also straightforward to see that $B_{J_\by}$ is the incidence matrix of the subgraph $G_{J_\by}$.
%
%By Theorem \ref{thm:div} (note $A=0$), $D(\by)=n- |I_\by|= n-\mathrm{rank}(B_{J_\by})$. Since $B_{J_\by}$ is the incidence matrix of the graph $G_{J_\by}$, by a fundamental result from algebraic graph theory (see e.g., Proposition 4.3 from \citet{Biggs94AlgeGraph}), we have $\mathrm{rank}(B_{J_\by})=n-\omega(G_{J_\by})$, where $\omega(G_{J_\by})$ is the number of connected components of $G_{J_\by}$. Therefore, we have $D(\by)=n- \mathrm{rank}(B_{J_\by})= \omega(G_{J_\by}), $ which completes the proof of the proposition.
%\end{proof}

\begin{figure}[!t]
\centering%
\subfigure[t][Matrix $B$]{$%
\begin{matrix}
    B=\begin{pmatrix}%
        +1      & -1       & 0         & 0         & 0    \\
        0       & +1         & -1       & 0         & 0    \\
        0       & 0       & +1         & -1      & 0    \\
        0       & 0         & 0       & +1         & -1   \\
        -1  & 0    & 0    & 0    & +1   \\
    \end{pmatrix}\\
   ~
\end{matrix}$ \label{fig:A_matrix}
}\hspace{8mm}
\subfigure[t][Graph $G$]{
  \includegraphics[width=0.38\textwidth]{./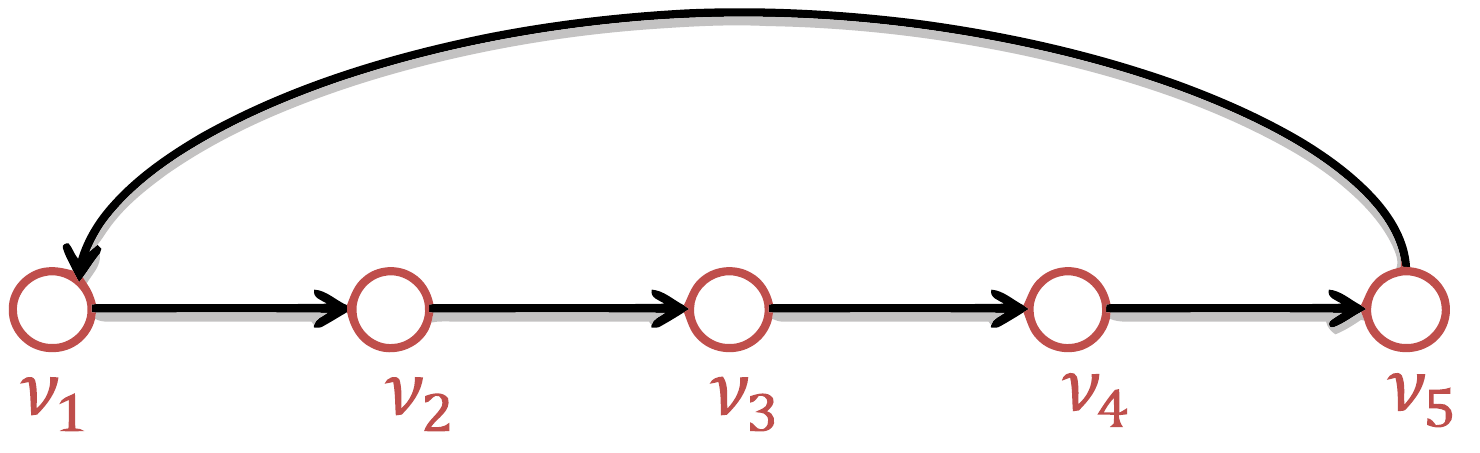}
	    \label{fig:G}
}\hspace{-3mm}
\caption{The matrix $B$ and the induced graph $G$.}
\label{fig:incidence_graph}
\end{figure}

\begin{figure}[!t]
\centering
\subfigure[t][Matrix $B_{J_\by}$]{$
\begin{matrix}
    B_{J_\by}=\begin{pmatrix}%
        +1      & -1       & 0         & 0         & 0    \\
        0       & 0       & +1         & -1      & 0    \\
        -1  & 0    & 0    & 0    & +1   \\
    \end{pmatrix}\\
   ~
\end{matrix}$
	    \label{fig:sub_A_matrix}
}\hspace{8mm}
\subfigure[t][Graph $G_{J_\by}$]{
  \includegraphics[width=0.38\textwidth]{./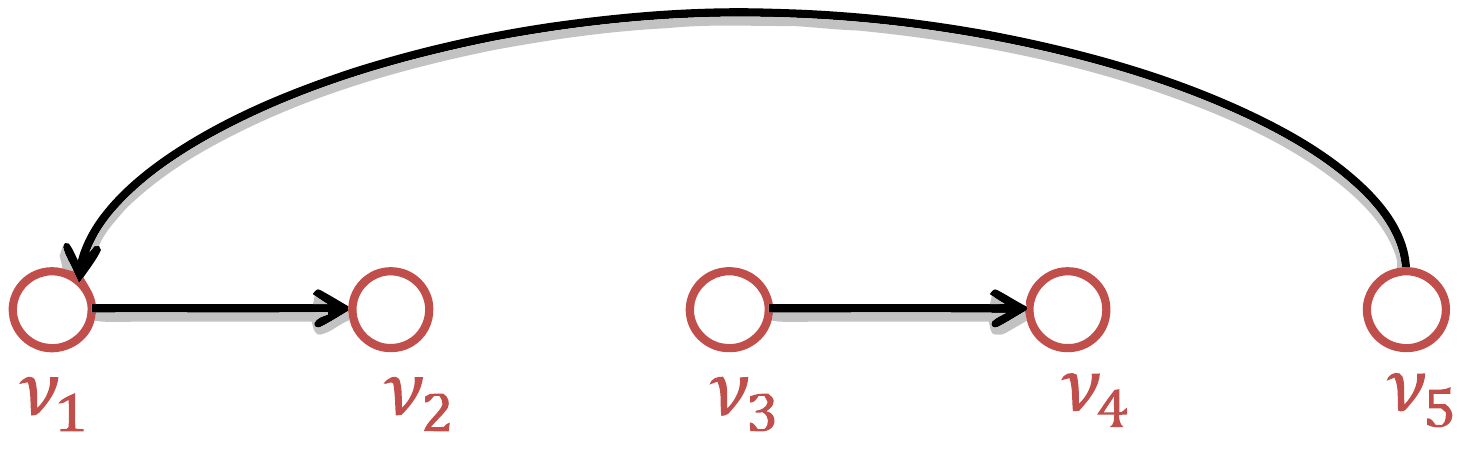}
	    \label{fig:sub_G}
}\hspace{-3mm}
\caption{The matrix $B_{J_\by}$ and the induced graph $G_{J_\by}$.}
\label{fig:sub_incidence_graph}
\end{figure}

The characterization of divergence in Proposition \ref{prop:uni_bound_iso} not only has interesting connections to graph theory but also leads to a computationally fast procedure to compute the divergence. In fact, it is easy to compute  $\omega(G_{J_\by})$ using either breadth-first or depth-first search in linear time in $n$, which is \emph{computationally much cheaper} than directly calculating the rank of $B_{J_\by}$ in Proposition \ref{prop:div_poly}. %To facilitate the understanding of Proposition \ref{prop:uni_bound_iso}, we also provide a toy example in Section \ref{sec:toy} in the supplementary material.
To facilitate the understanding of Proposition \ref{prop:uni_bound_iso},  we provide a toy example. Consider the following bounded isotonic constraint set with $n=5$:
\begin{equation}
\C =  \{\btheta \in \R^n: \theta_1 \le \ldots \le \theta_n, \;\mbox{and} \;\theta_n - \theta_1 \le \gamma \}.
\end{equation}
 The set $\C$ can be represented as $\C=\{\btheta \in \R^n: B \btheta \leq \bc\}$ where $B$ is shown in Figure \ref{fig:A_matrix} and $\bc$ only has one non-zero element at the $n$'th position, i.e., $c_n=\gamma$. The graph $G$ induced from $B$, which has only one connected component (i.e., $\omega(G)=1$), is shown in Figure~\ref{fig:G}.

Now suppose that we have $\hat \theta_{\gamma, 1}=\hat \theta_{\gamma, 2} < \hat \theta_{\gamma, 3}=\hat \theta_{\gamma, 4}<\hat \theta_{\gamma, 5}$ and  $\hat \theta_{\gamma, 5}=\hat \theta_{\gamma, 1} + \gamma$. Then  $J_\by=\{1,3,5\}$ and the corresponding $B_{J_\by}$ and $G_{J_\by}$ are presented in Figure \ref{fig:sub_incidence_graph}. From Figure \ref{fig:sub_incidence_graph}, $G_{J_\by}$ has 2 connected components $\{\theta_1, \theta_2, \theta_5\}$ and $\{\theta_3, \theta_4\}$ and thus $D(\by)=\omega(G_{J_\by})=2$.  It is of interest to compare this with the univariate unbounded isotonic regression example where the divergence of $\hat \bt_{\gamma}(\by)$ would be 3 (i.e., the number of distinct values of $\hat \theta_i$'s;  see Proposition 1 from \cite{MW00}) instead of 2.

 %Moreover, it is easy to see that for unbounded isotonic regression (i.e., $\gamma=+\infty$), the result in Example~\ref{ex:df_iso}, which shows that the divergence of $\hat \bt(\by)$  is the number of distinct values of $\hat{\theta}_i$'s, can be viewed as a simple consequence of Proposition \ref{prop:uni_bound_iso}; see Section \ref{sec:cor_uni} of the supplementary material for the details. %Also, for the illustration purpose, we provide a toy example for $n=5$ in Section \ref{sec:toy} in the supplementary material, where we visualize the graph $G$ and its subgraph $G_{J_\by}$ and characterize $D(\by)$ by $\omega(G_{J_\by})$.

Using exactly the same proof technique as that of Proposition \ref{prop:uni_bound_iso},  we can easily derive the following result for the DF of \emph{unbounded isotonic regression} on a partially ordered set. In particular,
recall the unbounded isotonic cone $\mathcal{M}=\{\bt \in \mathbb{R}^n : \theta_i \leq \theta_j, \forall (i,j) \in \widetilde{E}\}$ where $\widetilde{E}$ is defined in \eqref{eq:tilde_E} and the corresponding LSE $\hat{\bt}(\by)=\argmin_{\bt \in \mathcal{M}} \|\bt-\by\|_2^2$. The cone  $\mathcal{M}$ can be represented as $\mathcal{M}=\{\bt \in \mathbb{R}^n: B\bt \leq \mathbf{0}\}$, where $B \in \mathbb{R}^{|\widetilde{E}| \times n}$ is defined similarly as in \eqref{eq:A_iso_bounded_partial_order} (replacing $E$ in \eqref{eq:A_iso_bounded_partial_order} by $\widetilde{E}$).
Let $B_e$ be the $e$-th row of $B$, $J_\by:=\{e\in \widetilde{E}: B_e \hat \bt (\by) = b_e\}$ and  $\widetilde{G}_{J_\by}$  be the subgraph of $\widetilde{G}$ with the edge set $J_\by$. The divergence of $\hat \bt (\by)$ for unbounded isotonic regression is $D(\by)=\omega(\widetilde{G}_{J_\by})$, and therefore $\df(\hat \bt(\by))= \E[\omega(\widetilde{G}_{J_\by})]$.

In addition to characterizing the DF for general bounded isotonic regression, we also show a useful property of the divergence $D_\gamma(\by)$ in Theorem \ref{thm:iso_bounded_monotone} (where we make the dependence on the model complexity parameter $\gamma$ explicit). In particular, we prove that the divergence $D_\gamma(\by)$  (and thus the DF) is nondecreasing in $\gamma$. To show this we first present an important connection between the solution of bounded isotonic regression and that of unbounded isotonic regression (which can be viewed as a special case of bounded isotonic regression with $\gamma = + \infty$).  This result is of independent interest by itself.

We start with some notation. It is well known that the LSE for \emph{unbounded} isotonic regression $\hat\bt$ has a group-constant structure (here $\by$ is suppressed for notational simplicity). That is, there exists a partition $U_1,U_2,\dots,U_r$ of $V=\{1,\ldots, n\}$ (i.e., $U_s$'s are disjoint and $V=\bigcup_{s=1}^r U_s$) such that $\hat{\theta}_i=\bar\theta_s$ for some value $\bar\theta_s$ for each $i\in U_s$, for $1\leq s\leq r$. Moreover, without loss of generality, we assume that $\bar\theta_1<\bar\theta_2<\dots<\bar\theta_r$. Let $\hat \bt_{\gamma}$ be the LSE for bounded isotonic regression with the boundedness parameter $\gamma$. The next proposition shows that $\hat \bt_{\gamma}$  can be obtained by appropriately thresholding $\hat\bt$.

\begin{proposition}\label{prop:threshold_gen}
Let $|U_s|=k_s$ for $s=1,\dots,r$ and $H(L,\gamma)$ be a function on $\mathbb{R}^2$ defined as
\begin{eqnarray}
\label{eq:HL_gen}
H(L,\gamma) :=\sum_{s=1}^rk_s\left(L-\bar\theta_{ s}\right)_+ +
\sum_{s=1}^rk_s\left(L+\gamma-\bar\theta_{ s}\right)_-,
\end{eqnarray}
where $(x)_+=\max\{x,0\}$ and $(x)_-=\min\{x,0\}$. For any given $\gamma$ with $\bar\theta_{ r}-\bar\theta_{ 1}\geq\gamma\geq0$,  $H(L,\gamma)$ is a continuous and strictly increasing function of $L$. Moreover,  $\lim_{L\rightarrow-\infty}H(L,\gamma)=-\infty$ and $\lim_{L\rightarrow+\infty}H(L,\gamma)=+\infty$ so that there exists a unique $L_{\gamma}$ satisfying $H(L_{\gamma},\gamma)=0$. Then, we have
\begin{equation}\label{eq:thresh}
\hat{\theta}_{\gamma, i}=\max(L_{\gamma},\min(L_{\gamma}+\gamma,\bar\theta_{ s})),\text{ for all }i\in U_s.
\end{equation}
Moreover, $L_{\gamma}$ is nonincreasing in $\gamma$.
\end{proposition}

Proposition \ref{prop:threshold_gen} also provides an efficient way to compute the LSE for bounded isotonic regression. In particular, one can first compute $\hat\bt$ by solving the corresponding unbounded isotonic regression, which can be efficiently computed by using existing off-the-shelf solvers (e.g., SDPT3 \citep{SDPT3}). Given $\hat\bt$, one obtains the values of $\bar\theta_{ s}$  and $k_s$ for $s=1,\dots,r$, which are necessary for constructing the function in \eqref{eq:HL_gen}. If $\gamma>\bar\theta_{ r}-\bar\theta_{ 1}$, the boundedness constraint will be non-effective and $\hat \bt_\gamma = \hat \bt$. On the other hand, if $\bar\theta_{ r}-\bar\theta_{ 1}\geq\gamma\geq0$, since $H(L,\gamma)$ is a continuous and strictly increasing function of $L$, one can use \emph{bisection search} to compute $L_{\gamma}$ such that $H(L_{\gamma},\gamma)=0$. Then by \eqref{eq:thresh}, we threshold $\hat \bt$ to obtain $\hat \bt_\gamma $: for each $U_s$, if $\bar{\theta}_s < L_\gamma$, $\hat{\theta}_{\gamma, i} = L_\gamma$ for all $i \in U_s$; if $\bar{\theta}_s > L_\gamma+\gamma$, $\hat{\theta}_{\gamma, i} = L_\gamma + \gamma $ for all $i \in U_s$; otherwise $\hat{\theta}_{\gamma, i}$ is set to $\bar{\theta}_s$ for all  $i \in U_s$.

The key to the proof of the above result is to find appropriate values of dual variables such that the primal solutions in \eqref{eq:thresh} and dual solutions together satisfy the KKT condition of $\min_{\btheta \in \C} \| \by - \btheta\|_2^2$ with $\C$ in \eqref{eq:partial_iso_bounded}. We achieve this by designing a \emph{transportation problem}, which is a classical problem in operations research (see, e.g., Chapter 14 in \citet{Dantzig:59}). The dual solutions are constructed based on the solution of such a transportation problem. Please refer to the proof in Section \ref{sec:proof_DFIso} in the supplementary material for details.

%We also note that  when $\gamma \geq \bar\theta_{r}-\bar\theta_{ 1}$, the boundedness constraint will be non-effective and $\hat \bt_\gamma = \hat \bt$.
 %To better understand Proposition \ref{prop:threshold_gen}, let us take the univariate bounded isotonic regression  with $\C$ in \eqref{eq:iso_bounded} as an example. It is well known that the projection estimator for unbounded isotonic regression $\hat{\theta}_i=\min_{v \geq i} \max_{u \leq i}  \bar{y}_{u,v}$ where $\bar{y}_{u,v}=\frac{1}{v-u+1} \sum_{j=u}^v y_j$ (see Chapter 1 in \citet{RWD88}), which  has a group-constant structure. That is, there exists indices $0=t_0< t_1< t_2<\cdots< t_r=n$ and $r$ increasing ``levels" $\bar{\theta}_{ 1}<\bar{\theta}_{ 2}<\cdots<\bar{\theta}_{ r}$ such that $\hat{\theta}_i = \bar{\theta}_s$  for any  $t_{s-1}<i \leq t_s$. For a given $\gamma \leq \bar\theta_{ r}-\bar\theta_{ 1}$, Proposition \ref{prop:threshold_gen} shows that $\hat{\bt}_\gamma$ is obtained by thresholding the levels below $L_\gamma$ to $L_\gamma$ and levels above $L_\gamma+\gamma$ to $L_\gamma+\gamma$.
Combining Proposition \ref{prop:threshold_gen} and Proposition \ref{prop:uni_bound_iso},  we obtain the following theorem which shows the monotonicity of DF in terms of the boundedness parameter $\gamma$ in bounded isotonic regression (see Section \ref{sec:supp_monotone} in the supplementary material for the proof).
%that DF is an appropriate measure of model complexity for bounded isotonic regression.
\begin{theorem}\label{thm:iso_bounded_monotone}
For any given $\by \in \R^n$ the divergence of $\hat \bt_{\gamma} (\by)$ is nondecreasing in $\gamma$. This implies that $\df(\hat\bt_\gamma(\by))$ is nondecreasing in $\gamma$.
\end{theorem}

%As we explained in the second to the last paragraph in the introduction, the monotonicity property of DF in Theorem \ref{thm:iso_bounded_monotone} suggests that DF is an appropriate measure of model complexity.

\section{Additive TV Regression and Other Applications}
\label{sec:other_app}
In this section we apply our main result to derive the DF for additive TV regression (see Example 3 in the Introduction) and $\ell_\infty$-regularized group Lasso. Moreover, our main result (Theorem \ref{thm:div}) also yields, as special cases, known results on DF of many popular estimators, e.g., \emph{Lasso and generalized Lasso}, \emph{linear regression}, and \emph{ridge regression}. Due to space constraints, we illustrate these applications in Section \ref{sec:recover_other} of the supplementary file; the proofs of the results in this section are also provided in Section~\ref{sec:proof_other_app}.

\subsection{Additive Generalized TV Regression}\label{sec:AddMdl}
For each response $y_i$ and input $\mathbf{x}_i=(x_{i1}, \ldots, x_{id})$, where $1 \leq i \leq n$,  the additive model assumes that $\E(y_i | \mathbf{x}_i) = \sum_{j=1}^d f_j(x_{ij})$. Let $\theta_{ji}^*=f_j(x_{ij})$ and $\bt_j^*=(\theta_{j1}, \ldots, \theta_{jn})$, where it is typically assumed that each $\bt_j$ has zero mean (i.e., $\mathbf{1}^\top  \bt_j=0$). \cite{Petersen:16} proposed the following additive TV regularizer. Let $D \in \R^{(n-1) \times n}$ be the discrete  first derivative matrix (i.e., the $i$-th row of $D$ only contains two non-zero elements: $D_{i,i}=1$ and $D_{i,i+1}=-1$) and $P_j \in \R^{n \times n}$  be the permutation matrix that orders the $j$-th feature from least to greatest. The estimation of $\{\bt_j^*\}_{j=1}^d$ in an additive TV regularized regression takes the form$\vspace{-0.08in}$:
\begin{eqnarray*}
\{\hat{\theta}_0, \{\hat{\bt}_j\}_{j=1}^d \}= & \argmin_{\{\bt_j\}_{j=1}^d} & \frac{1}{2} \Big\|\by -\sum_{j=1}^d \bt_j - \theta_0 \mathbf{1} \Big\|_2^2  + \tau \sum_{j=1}^d \|DP_j \bt_j\|_1 \\
                         & & \text{s.t.} \qquad \mathbf{1}^\top  \bt_j=0, \quad 1 \leq j \leq d. \vspace{-0.08in}
\end{eqnarray*}
The penalty $\|DP_j \bt_j\|_1$ encourages $\bt_j$ to be piecewise constant with a small number of jumps, depending on the regularization $\tau$. % {\red We note that we do not include the intercept $\theta_0$ for the ease of presentation (one can always center $\by$ in practice).}
In fact, instead of using the discrete first derivative matrix $D$, we could impose a higher order smoothness for each component function $f_j$. More precisely, one can use a higher order discrete difference matrix $D_j$ for each  $f_j$; in the sequel we will consider this more general setup. For example, the second order differencing matrix produces piecewise affine fits, with a few number of kink points. The specific form of  higher order discrete difference matrix  is given in Eq.~(41) of~\cite{T14}. Let us denote $D_j P_j$ by $Q_j \in \R^{n_j \times n}$ for notational simplicity, and we consider the following \emph{additive generalized TV regression}: $\vspace{-0.08in}$
\begin{eqnarray}\label{eq:fused_add}
\{\hat{\theta}_0, \{\hat{\bt}_j\}_{j=1}^d \} = & \argmin_{\{\bt_j\}_{j=1}^d} & \frac{1}{2} \Big\|\by -\sum_{j=1}^d \bt_j - \theta_0 \mathbf{1} \Big\|_2^2  + \tau \sum_{j=1}^d \|Q_j \bt_j\|_1 \\
                         & & \text{s.t.} \qquad \mathbf{1}^\top  \bt_j=0, \quad 1 \leq j \leq d. \vspace{-0.08in}\nonumber
\end{eqnarray}
%\xnote{Bodhi: could you try to add more explanations on the $D_j$ matrix and make some comparisons with the result in \cite{Petersen:16}?}
%where $Q_j\in\mathbb{R}^{q_j\times n}$.

Let the $\hbt(\by):=\sum_{j=1}^d \hbt_j(\by) + \hat\theta_0(\by) \mathbf{1}$ be the estimated function values at the design points. To characterize its divergence, we rewrite the optimization problem in \eqref{eq:fused_add} as
{\small \begin{eqnarray}\label{eq:lasso_gen_new1}
  (\hbt(\by), \{\hbt_j(\by)\}_{j=1}^d,\hat\theta_0(\by),\{\hat\bg_j(\by)\}_{j=1}^d) & \in &\argmin_{\bt, \bt_j,\theta_0,\bg_j } \frac{1}{2}\|\bt-\by\|_2^2+\sum_{j=1}^d\tau\mathbf{1}^\top\bg_j \\
                          & & \;\;\;  \text{s.t.}  \;\; \bt-\sum_{j=1}^d \bt_j - \theta_0 \mathbf{1} \leq \mathbf{0}, \;\; -\bt+\sum_{j=1}^d \bt_j + \theta_0 \mathbf{1}  \leq \mathbf{0} \nonumber\\
                          & & \;\;\;  \;\;\;\;   \;\; \;Q_j\bt_j-\bg_j  \leq \mathbf{0}, \;\; -Q_j\bt_j-\bg_j \leq \mathbf{0}\nonumber\\
                          & & \;\;\;  \;\;\;\;   \;\; \;\mathbf{1}^\top  \bt_j\leq0, \;\; -\mathbf{1}^\top  \bt_j\leq0,\quad 1 \leq j \leq d. \nonumber
\end{eqnarray}}
With some algebraic manipulations, we show that the optimization in \eqref{eq:lasso_gen_new1} is a special case of \eqref{eq:LSE} with a linear perturbation term $\mathbf{d}^\top\bxi$ and $\lambda=0$ (in particular, in the form of \eqref{eq:linear_per}); see the proof in the supplementary file for the details. We then apply Theorem~\ref{thm:div} to obtain the following result on the DF for $\hbt(\by)$. In our proof, we also verify that the condition in Theorem~\ref{thm:div} (i.e., $-\mathbf{d}= A^\top \mathbf{u}$ for some $\mathbf{u} \geq \mathbf{0}$) indeed holds.

\begin{proposition}\label{prop:fused_add}
For the estimator $\hbt(\by)=\sum_{j=1}^d \hbt_j(\by) + \hat\theta_0(\by) \mathbf{1}$ in  \eqref{eq:fused_add}, the divergence of $\hbt(\by)$ is,
\[
D(\by)=\mathrm{dim}(\mathrm{span}\{\mathbf{1}_{n\times1},\mathrm{ker}(K_1),\dots,\mathrm{ker}(K_d)\}),
\]
where, for $j=1,\dots,d$,  $K_j=\begin{pmatrix}Q_0^j\\\mathbf{1}_{1\times n}\end{pmatrix}$,
$Q_0^j$ is the sub-matrix of $Q_j$ consisting of rows $\mathbf{q}_{ji}$ ($1\leq i\leq n_j$) of $Q_j$ such that $\mathbf{q}_{ji}^\top\hbt_j(\by)=0$ and
$\mathrm{ker}(K_j) :=\{\bx\in\mathbb{R}^n : Q_0^j\bx=\mathbf{0}\text{ and }\mathbf{1}_{1\times n}\bx=0\}$ is the kernel of $K_j=\begin{pmatrix}Q_0^j\\\mathbf{1}_{1\times n}\end{pmatrix}$. Further, $\mathrm{df} (\hbt(\by))=\E(D(\by))$.
\end{proposition}

\begin{remark}\label{rem:compute_NullSpace}
\emph{
For each $j$, the matrix $K_j$ can be easily constructed by checking if $\mathbf{q}_{ji}^\top\hbt_j(\by)=0$ for $1\leq i\leq n_j$. After obtaining $K_j$, the basis for the null space $\mathrm{ker}(K_j)$ can be easily computed by transforming $K_j$ into the reduced row echelon form using Gaussian elimination (note that one can use the \emph{null} function in Matlab or the \emph{Null} function in R to compute the basis of $\mathrm{ker}(K_j)$). Then, we construct a matrix using the basis of $\mathrm{ker}(K_j)$ for each $j$ and $\mathbf{1}_{n\times1}$ as its column so that $D(\by)$ can be computed as the rank of this matrix.
}
\end{remark}

\subsection{$\ell_\infty$-regularized Group Lasso}\label{sec:grouplasso}
%{\red Using the similar proof technique in Proposition \ref{prop:fused_add}, we could also obtain the DF for $\ell_\infty$-regularized group Lasso.}
 Let $\mathbb{G}=\{\mathcal{G}_1,\mathcal{G}_2,\dots,\mathcal{G}_l\}$ be a partition of $\{1,2,\dots,d\}$. Each element $\mathcal{G}\in\mathbb{G}$ represents a group of variables. %and any pairs of elements do not overlap.
The $\ell_\infty$-regularized group Lasso estimator can be formulated as the following optimization problem \citep{Zhao:09Group,Negaban:2011}:
\begin{equation}\label{eq:grouplasso_gen}
\hbbt(\by)\in\argmin_{\bbt\in\mathbb{R}^d}\frac{1}{2}\|\by-X\bbt\|_2^2+\tau\sum_{\mathcal{G}\in \mathbb{G}}\|\bbt_{\mathcal{G}}\|_\infty,
\end{equation}
where $\bbt_{\mathcal{G}}$ is the sub-vector of $\bbt$ consisting of the coordinates indexed by the elements in $\mathcal{G}$. We can easily see that \eqref{eq:grouplasso_gen} is a special case of the optimization problem~\eqref{eq:LSE}. In fact, by introducing the variable $\bg\in\mathbb{R}^{l}$ and letting $\bt=X\bbt$, \eqref{eq:grouplasso_gen} can be equivalently reformulated as
%\begin{equation}\label{eq:grouplasso_gen_equiv}
% (\hat\bbt(\by),\hat\bg(\by))\in\argmin_{-\gamma_j\mathbf{1}_{|\mathcal{G}_j|}\leq \bbt_{\mathcal{G}_j}\leq\gamma_j\mathbf{1}_{|\mathcal{G}_j|}}\frac{1}{2}\|\by-X\bbt\|_2^2
% +\tau\mathbf{1}^\top\bg.
%\end{equation}
%Letting $\bt=X\bbt$, the formulation in \eqref{eq:grouplasso_gen_equiv} is further equivalent to
{\small \begin{eqnarray}\label{eq:grouplasso_gen_new}
  (\hbt(\by), \hat\bbt(\by),\hat\bg(\by)) & \in &\argmin_{\bt, \bbt,\bg } \frac{1}{2}\|\bt-\by\|_2^2+\tau\mathbf{1}^\top\bg \\
                          & & \;\;\;  \text{s.t.}  \;\; X\bbt - \bt \leq \mathbf{0}, \;\; -X\bbt + \bt \leq \mathbf{0} \nonumber\\
                          & & \;\;\;  \;\;\;\;   \;\; \;\bbt_{\mathcal{G}_j}-\gamma_j\mathbf{1}_{|\mathcal{G}_j|}  \leq \mathbf{0}, \;\; -\bbt_{\mathcal{G}_j}-\gamma_j\mathbf{1}_{|\mathcal{G}_j|} \leq \mathbf{0}.\nonumber
\end{eqnarray}}
By setting $\bxi=(\bbt^\top,\bg^\top)^\top$ and defining $E$ as the $d\times l$ matrix with $E_{ij}=1$ if $i\in\mathcal{G}_j$ and $E_{ij}=0$ otherwise,~\eqref{eq:grouplasso_gen_new} is a special case of \eqref{eq:LSE} with
{\small \begin{equation}\label{eq:groupLasso_A_B}
  \bd=(\mathbf{0}_{1 \times d}, \tau \mathbf{1}_{1 \times l})^\top, \; \lambda=0, \; A=\begin{pmatrix}
  X & \mathbf{0}_{n \times l} \\
  -X & \mathbf{0}_{n \times l} \\
  I_d  & - E  \\
  -I_d  & - E
\end{pmatrix}, \; B=\begin{pmatrix}
  -I_{n}  \\
   I_{n}  \\
   \mathbf{0}_{d \times n}   \\
   \mathbf{0}_{d \times n}
\end{pmatrix}, \; \bc=\mathbf{0}.
\end{equation}}
In the next corollary, we characterize the DF of the $\ell_\infty$-regularized group Lasso estimator using Theorem~\ref{thm:div}. %In its proof, given in the supplementary file, we will verify the assumption in Theorem \ref{thm:div} (i.e., $-\mathbf{d}= A^\top\mathbf{u}$ for some $\mathbf{u} \geq \mathbf{0}$).

\begin{corollary}\label{thm:grouplasso_gen}
In the $\ell_\infty$-regularized group Lasso problem described in \eqref{eq:grouplasso_gen} and \eqref{eq:grouplasso_gen_new}, for a.e.~$\by\in\mathbb{R}^n$, $\mathrm{df}(\hat\bt(\by))=\mathrm{df} (X\hat\bbt(\by))=\E[\mathrm{rank}(X_{J_0^c})],$
where
\[
J_0=\Big\{i \in \{1,\ldots, d\}:i\in\mathcal{G}_j, \hat\beta_i(\by)=\|\hat\bbt_{\mathcal{G}_j}(\by)\|_\infty \; \text{ for some } j\in\{1,2,\dots,l\}\Big\},
\]
and $J_0^c$ is the complement set of $J_0$ and $X_{J_0^c}$ consists of the columns of $X$ indexed by $J_0^c$.
\end{corollary}

\section{Application: SURE and the Choice of Tuning Parameters}\label{sec:SURE}

Consider the formulation of the problem posited in~\eqref{eq:SeqMdl}. For notational simplicity, we will use $\lambda$ to denote the tuning parameter in the regularized/constrained LSE $\hbt_\lambda(\by)$ (we highlight the dependence of $\hbt(\by)$ on $\lambda$ in this section). For example, in bounded isotonic regression the tuning parameter is the choice of the range of $\bt$ (i.e., the parameter $\gamma$ in \eqref{eq:partial_iso_bounded}); in penalized convex regression (see~\eqref{eq:LSE_Lip}) the estimator depends on the tuning parameter $\lambda$ on the norm of the subgradients.

%Let us consider the example of multivariate isotonic regression.   Suppose that $\X := \{x_1,\ldots, x_n\} \subset \R^d$, $d \ge 1$, and consider the partial order $\lesssim$ where $u :=(u_1,\ldots, u_d) \lesssim  (v_1,\ldots, v_d)=: v$ if and only if $u_i \le v_i$ for all $i=1,\ldots, d.$ The regression problem can be expressed in the sequence form as in~\eqref{eq:SeqMdl} where $\bt \in \C$, and $\C$ is defined in~\eqref{eq:ConePO}.

%It is well known that $\hbt(\by)$, the projection of $\by$ onto $\C$, suffers from the spiking effect, i.e., the range of $\hbt(\by)$, $\max \hbt_i - \min \hbt_i$,  takes large values (see~\citet{Pal08} and~\citet{WS93}). As a remedy to this over-fitting we can define $\C_{\lambda}=\C \cap \{\btheta: \max \theta_i - \min \theta_i \leq \lambda \}$ and consider the following problem:
%%\begin{equation}\label{eq:PenalPO}
%%	\hbt(\by) = \arg \min_{\btheta \in \C} \|\by - \btheta\|^2 + \lambda (\max \theta_i - \min \theta_i), \qquad \mbox{for } \lambda \ge 0.
%%\end{equation}
%\begin{equation}\label{eq:PenalPO}
%	\hbt(\by) = \arg \min_{\btheta \in \C_\lambda} \|\by - \btheta\|^2
%\end{equation}

In this section we use SURE to choose the tuning parameter $\lambda$. Let
\begin{equation}\label{eq:L_n}
	L_n(\lambda) = \| \hbt_\lambda(\by) - \bt^*\|^2_2
\end{equation}
denote the loss in estimating $\bt^*$ by $\hbt_\lambda(\by)$.  We would ideally like to choose $\lambda$ by minimizing $L_n(\cdot)$. Let %\begin{equation}\label{eq:starLambda}
$\lambda^* := \arg \min_{\lambda \ge 0} L_n(\lambda).$
%\end{equation}
We note that $\lambda^*$ is a random quantity as $L_n(\lambda)$ is random. Of course, we cannot compute $\lambda^*$ as we do not know $\bt^*$. However we can minimize an (unbiased) estimator of $L_n$, assuming $\sigma$ is known, as described below. Let
\begin{equation}\label{eq:U_n}
	U_n(\lambda) := \|\by -  \hbt_\lambda(\by)\|^2_2 + 2 \sigma^2 D(\hbt_\lambda(\by)) - n \sigma^2,
\end{equation}
where $D(\hbt_\lambda(\by))$ denotes the divergence of $ \hbt_\lambda(\by)$. It is well known that for all $\lambda \ge 0$, $\E[U_n(\lambda)] = \E[L_n(\lambda)]$; see \citet{S81} (also see Proposition 2 of \citet{MW00}). The quantity $U_n$ in \eqref{eq:U_n} is usually called the SURE. Let
\begin{equation}\label{eq:SURE_tuned}
\hat \lambda := \arg \min_{\lambda \ge 0} U_n(\lambda)
\end{equation}
be the minimizer of $U_n(\lambda)$, which can be computed from the data (if $\sigma^2$ is assumed known). Note that here we would need to compute the divergence of $ \hbt_\lambda(\by)$, which we can calculate using the results in the previous sections.

We empirically study the behavior of the ratio $L_n(\hat \lambda)/L_n(\lambda^*)$ for bounded isotonic regression and penalized convex regression. We also compare the performance of different tuning parameter selection methods --- SURE and cross-validation --- including the no-tuning parameter approach (e.g., the standard unbounded isotonic regression and un-penalized convex regression) for these two problems.

{In Sections~\ref{sec:exp_iso} and~\ref{sec:exp_cvx} we provide simulation studies when the true value of the noise variance $\sigma^2$ is assumed known for SURE. When $\sigma^2$ is known, the SURE method significantly outperforms its competitors. However, we note that the CV method does not require any knowledge of $\sigma^2$. In Section \ref{sec:exp_sigma}, we estimate  $\sigma^2$ using an approach proposed in~\cite{MW00}. In this case, the performance of SURE and CV are comparable but CV is computationally more expensive than SURE.
%Note that we have used the known value of $\sigma$ for calculating the SURE ratio in our simulation studies, which may not be available in a real application. As error variance estimation is a very well-studied problem in nonparametric regression and there are several methods already available in the statistical literature (see e.g., \citet{DetteEtAl98}, \citet{KG02}, \citet{MSW03}, \citet{MunkEtAl05} and the references therein) we do not discuss this issue further. In practice any of these above methods could be used to estimate $\sigma^2$. On the other hand, the CV method does not require any knowledge of the noise level $\sigma$; although it is computationally more expensive than SURE.
}

 %The ratio $L_n(\hat \lambda)/L_n(\lambda^*)$ is always greater than 1, and we expect it to be close to $1$ if the tuning parameter selection method performs well. From our experimental results, the SURE ratio is always smaller than the ratio achieved by other methods and concentrates near 1. This suggests that SURE is doing a very good job in selecting the tuning parameter in shape-restricted estimation problems.

\subsection{Bounded Isotonic Regression}
\label{sec:exp_iso}

%\begin{figure}[!t]
%\centering
%  \includegraphics[width=0.6\textwidth]{./figs/div_n_100_p_2.eps}
%  \caption{Comparison between the \emph{calculated DF} using Proposition \ref{prop:uni_bound_iso} and the \emph{true DF} using the definition in~\eqref{eq:df_cov}.}
%  \label{fig:df_comp}
%\end{figure}

\begin{figure}[!t]
        \centering
        \subfigure[b][$d=2$]{
                \includegraphics[width=0.40\textwidth]{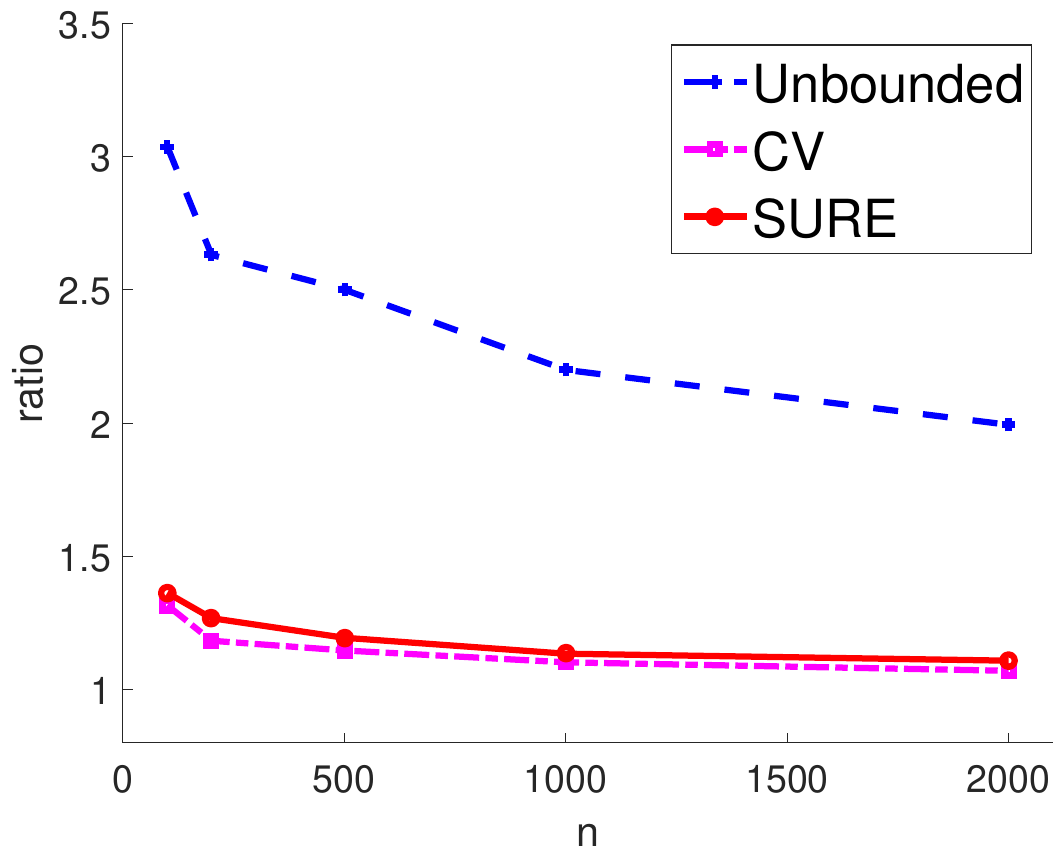}
                \label{fig:iso_ratio_d_2}
                }
        \subfigure[b][$d=5$]{
                \includegraphics[width=0.40\textwidth]{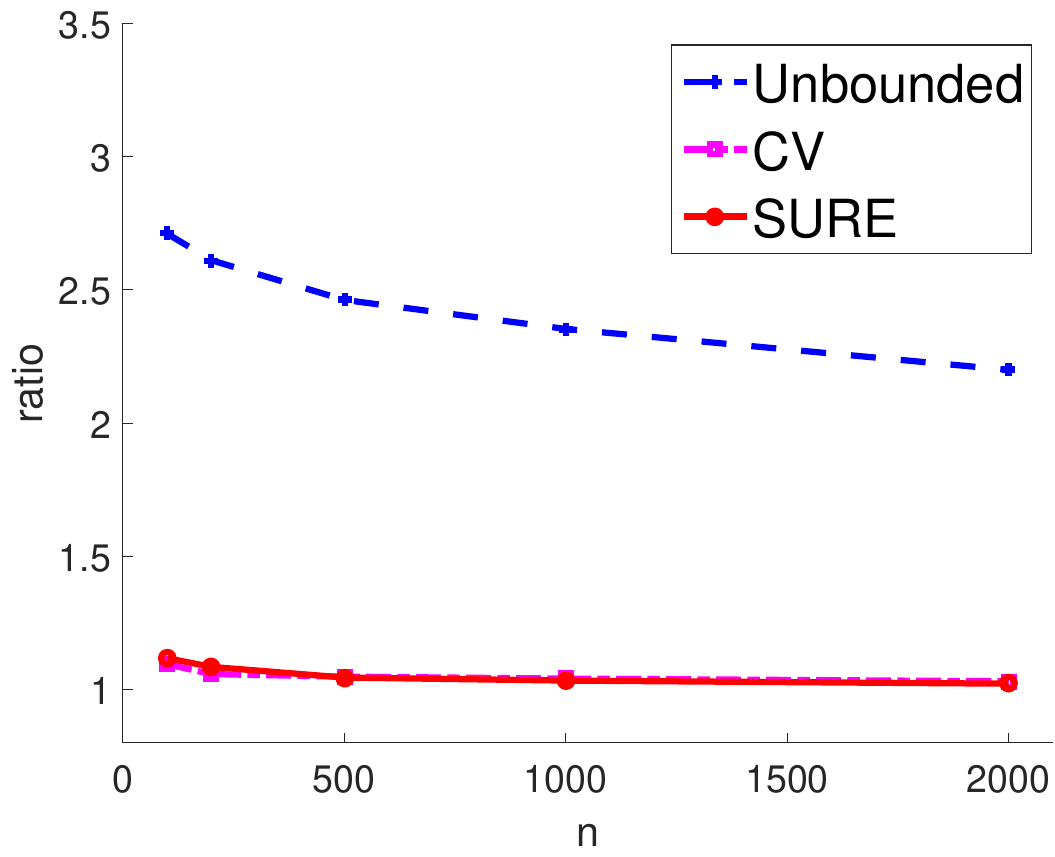}
                \label{fig:iso_ratio_d_5}
                }\\%
        \subfigure[b][$d=7$]{
                \includegraphics[width=0.40\textwidth]{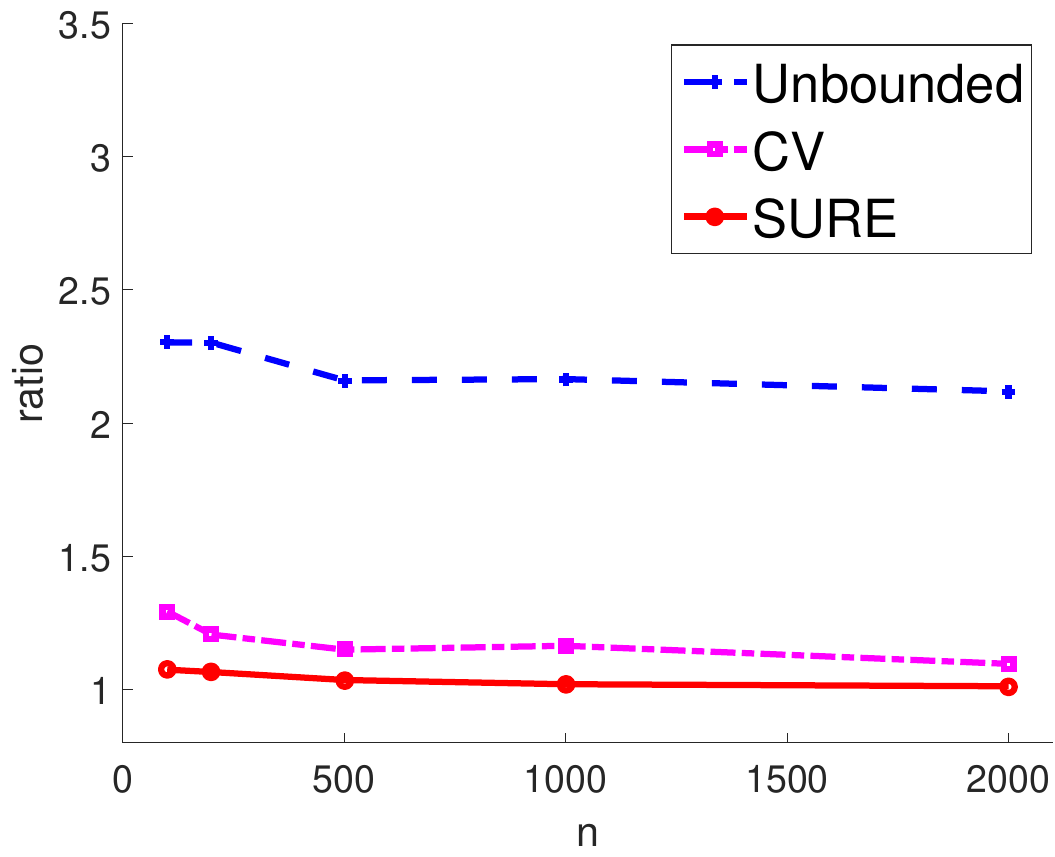}
                \label{fig:iso_ratio_d_7}
                }\
         \subfigure[b][$d=10$]{
                \includegraphics[width=0.40\textwidth]{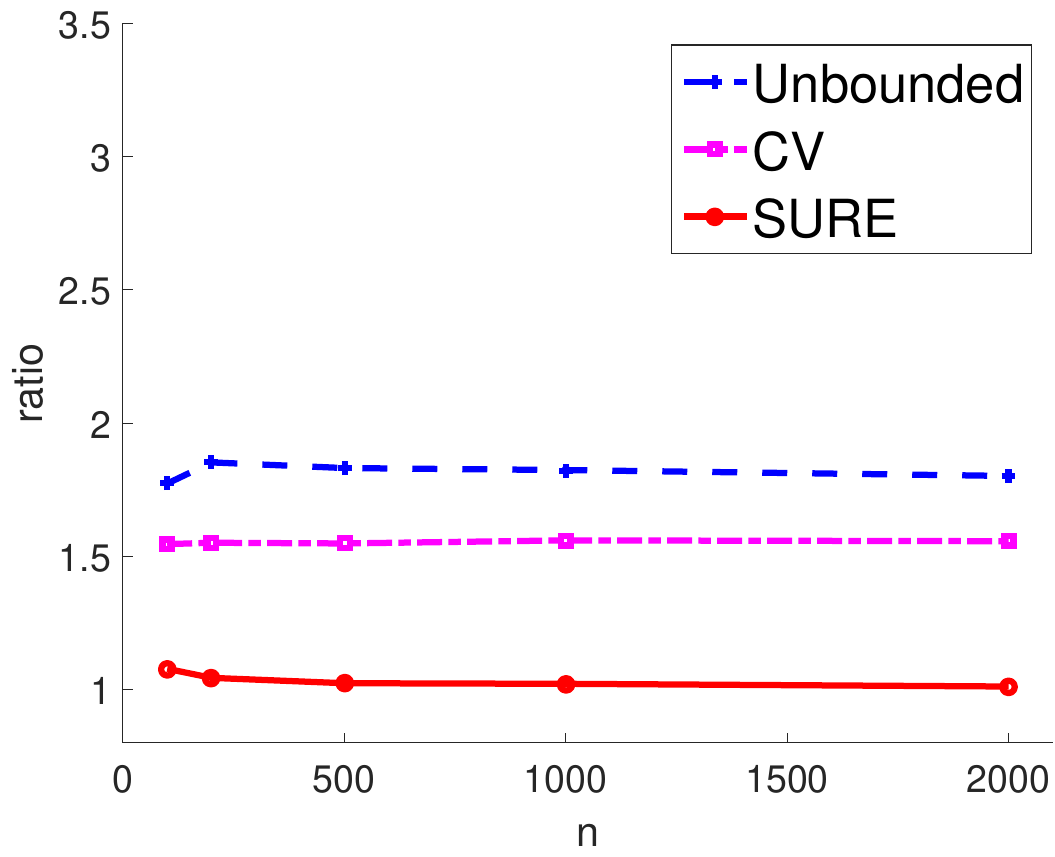}
                \label{fig:iso_ratio_d_10}
                }
        \caption{Comparison between the unbounded ratio, the CV ratio and the SURE ratio for isotonic regression.}
        \label{fig:iso_ratio} \vspace{-2mm}
\end{figure}
We generate $n$ i.i.d.~design points $\bx_{i} \sim \text{Unif}[0,1]^d$, for $i=1,\ldots,n$. We set the regression function $f: \mathbb{R}^d \rightarrow \mathbb{R}$ to be $f(\bx)=\|\bx\|_2^2$. Recall that $\bt^* = (f(\bx_1),\ldots, f(\bx_n))$, which is a bounded vector (since $\|\bx\|_2^2 \leq d$) and satisfies $\theta_i^* \leq \theta_j^*$ whenever $\bx_i \leq \bx_j$. We generate the response $y_i$, for $i=1,\ldots,n$, according to model~\eqref{eq:RegMdl2} with $\sigma^2=1$.

Since the true regression function $f$ is a bounded isotonic function, we estimate $\bt^*$ by minimizing $\|\bt-\by\|_2^2$ subject to the following constraints. For each pair $(i,j)$, we put an isotonic constraint $\theta_i \leq \theta_j$ whenever $\bx_i \leq \bx_j$. We further add one additional \emph{boundedness constraint} $\max \theta_i -\min \theta_i \leq \lambda$, where $\lambda$ is the tuning parameter (i.e., the parameter $\gamma$ in \eqref{eq:partial_iso_bounded}). For each given $\lambda$, we obtain the LSE $\hat \bt_\lambda(\by)$.

We demonstrate the performance of the selected parameter $\widehat{\lambda}$ using SURE. In particular, we compute the ratio $L_n(\widehat{\lambda})/L_n(\lambda^*)$, where $\widehat{\lambda}$ is selected by \eqref{eq:SURE_tuned} (we call this the {\it SURE ratio}). % where $L_n$ is the squared loss defined in~\eqref{eq:L_n}, $\widehat{\lambda}$ is the parameter selected via SURE in \eqref{eq:hatLambda}, and $\lambda^*$ is oracle tuning parameter in \eqref{eq:starLambda}.
We compare the SURE ratio to the so-called \emph{CV ratio}, where the boundedness parameter is selected by 5-fold cross-validation. We note that when implementing the CV method, for a given training set $\mathcal{T}_{\text{tr}}$, the estimated function value at a point $\bx$ is set to $\hat{f}(\bx) := \min_{\bx_i \in \mathcal{T}_{\text{tr}}: \bx_i \geq \bx} \hat{\theta}_{\lambda, i}$, where $\hat{\theta}_{\lambda, i}$ the estimated function value at the training data point $\bx_i$ obtained from the bounded isotonic LSE. Such a way of extending the estimated function values (on the training set) to new data points ensures that the extended function is monotone and bounded; this extension has also been used by other authors (see e.g., \citet{Sabyasachi15}). We also compare the performance of the bounded isotonic LSE with the unbounded LSE where we do not include the boundedness constraint $\max \theta_i -\min \theta_i \leq \lambda$ (or equivalently, set $\lambda=+\infty$ and compute ${L_n(\infty)}/{L_n(\lambda^*)}$). %In particular, we calculate the ratio between the loss from unbounded isotonic regression and the oracle loss, i.e., ${L_n(\infty)}/{L_n(\lambda^*)}$ (we call this the {\it unbounded ratio}), and compare it to the SURE ratio $L_n(\widehat{\lambda})/L_n(\lambda^*)$.

We set $d=2, 5, 7, 10$ and for each fixed $d$, we vary the sample size $n=100, 200, 500, 1000, 2000$ and compute the SURE, CV and unbounded ratios over 100 independent replications and plot the results in Figure~\ref{fig:iso_ratio}.  %In Section \ref{sec:add_iso} of the supplementary material we provide similar plots for $d=5$ and $d=7$.
From Figure \ref{fig:iso_ratio} one can see that the SURE ratios are, in general, much smaller than the unbounded ratios, illustrating the usefulness of including the boundedness constraint in isotonic regression. When the dimension is very small (e.g., $d=2$)  the CV ratio slightly outperforms the SURE ratio; while for larger $d$ (e.g., $d=7$ or $d=10$)  the SURE based method significantly outperforms the CV approach. Moreover, for larger sample sizes $n$, the SURE ratios are close to 1 indicating that the bounded LSE tuned via SURE performs as good as the bounded LSE with oracle tuning.

%Further, we also observe that as $n$ increases, the standard deviations of both the unbounded ratio and the SURE ratio decreases, in most cases. Also, as expected, the need for regularization (penalization) is more apparent as we increase the dimension $d$ of the problem.

 %From Figure~\ref{fig:iso_ratio} one might conjecture that $L_n(\widehat{\lambda})/L_n(\lambda^*) \rightarrow_p 1$ as $n \to \infty$. This would suggest that the SURE estimator can indeed produce an optimal choice of the tuning parameter.

\subsection{Penalized Multivariate Convex Regression}
\label{sec:exp_cvx}

\begin{figure}[!t]
        \centering
        \subfigure[b][$n=100$, $d=4$]{
                \includegraphics[width=0.40\textwidth]{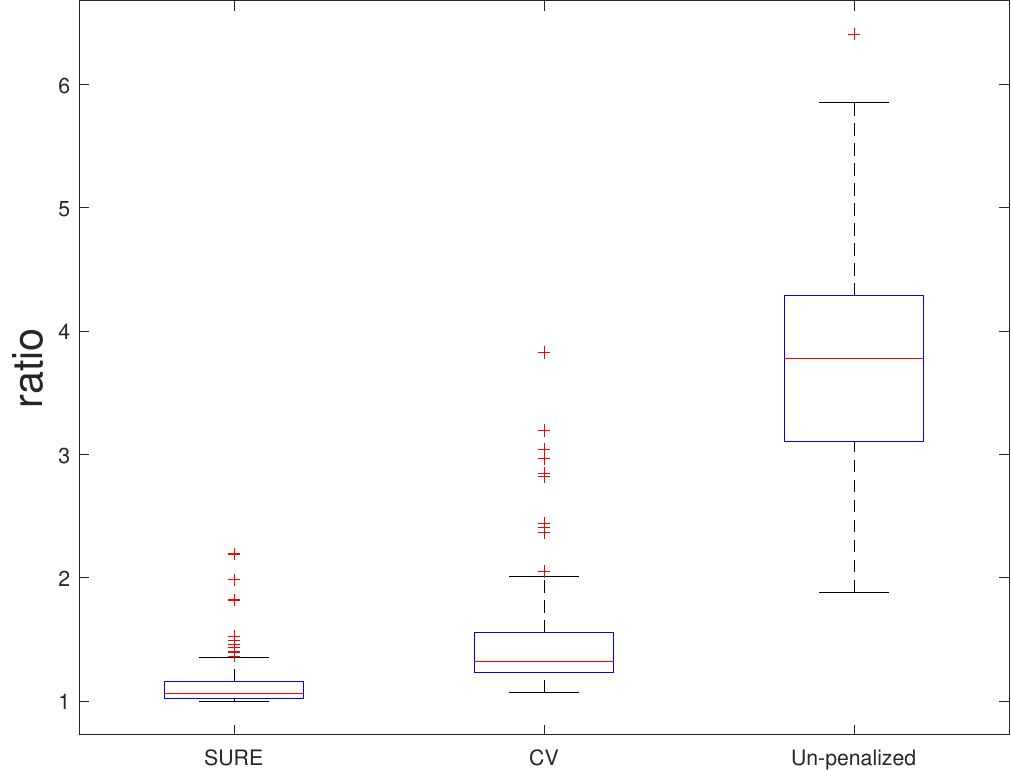}
                \label{fig:cvx_n_100_d_2}
                }
        \subfigure[b][$n=500$, $d=4$]{
                \includegraphics[width=0.40\textwidth]{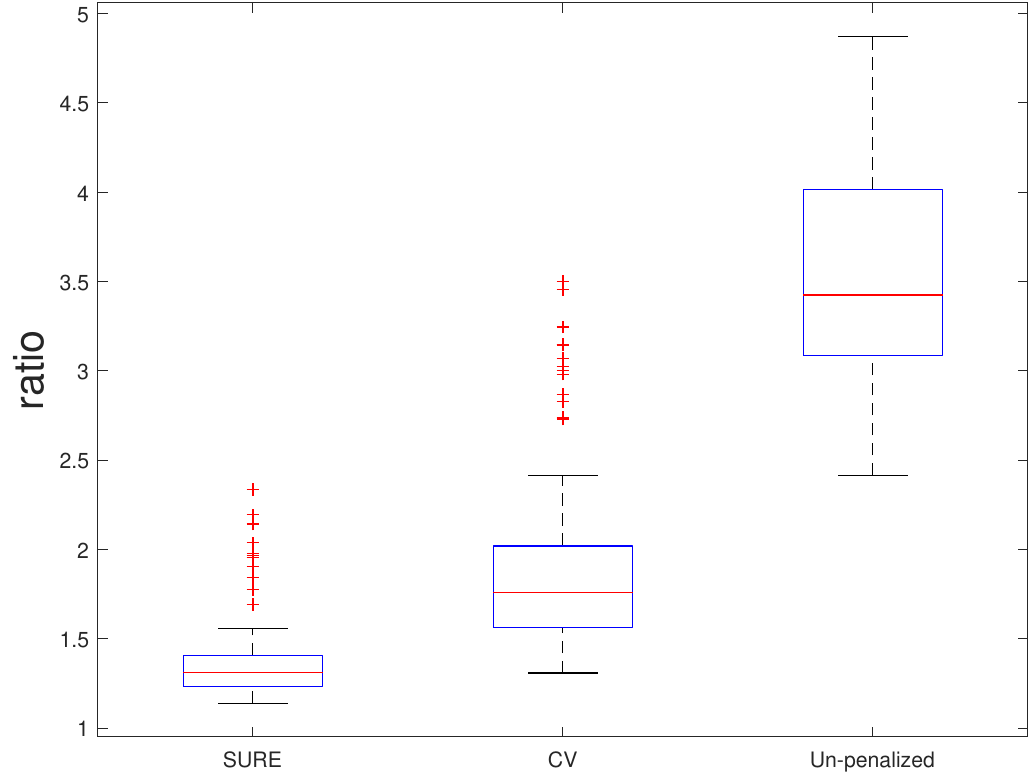}
                \label{fig:cvx_n_100_d_4}
                }\\
         \subfigure[b][$n=100$, $d=10$]{
                \includegraphics[width=0.40\textwidth]{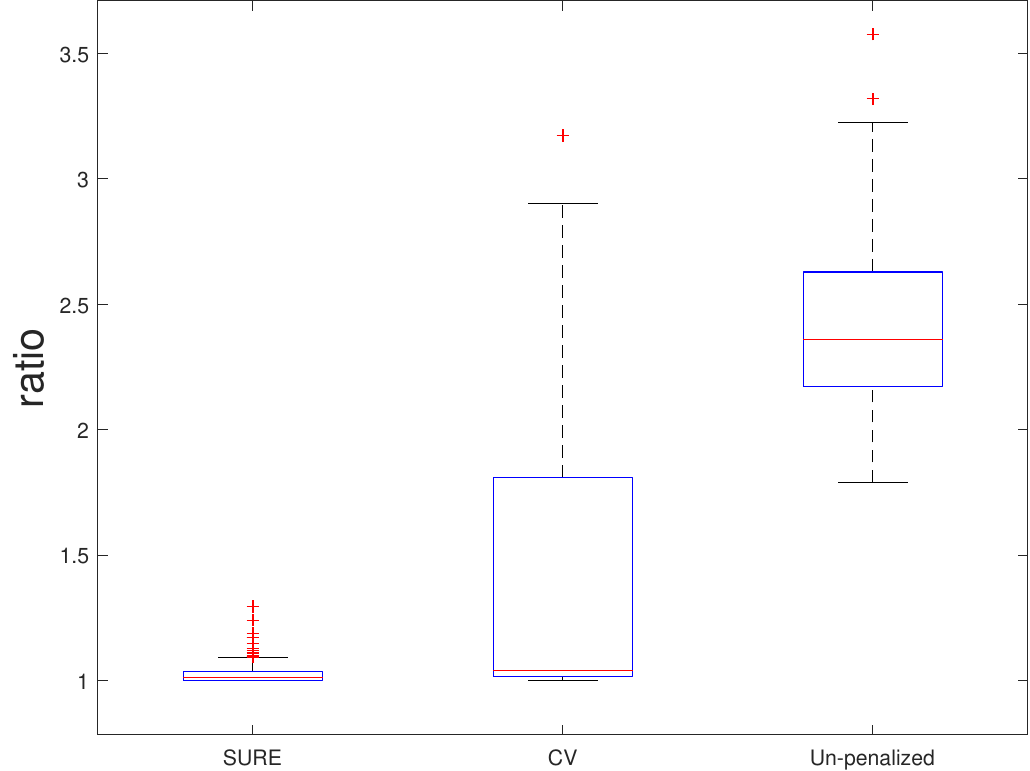}
                \label{fig:cvx_n_500_d_4}
                }\
         \subfigure[b][$n=500$, $d=10$]{
                \includegraphics[width=0.40\textwidth]{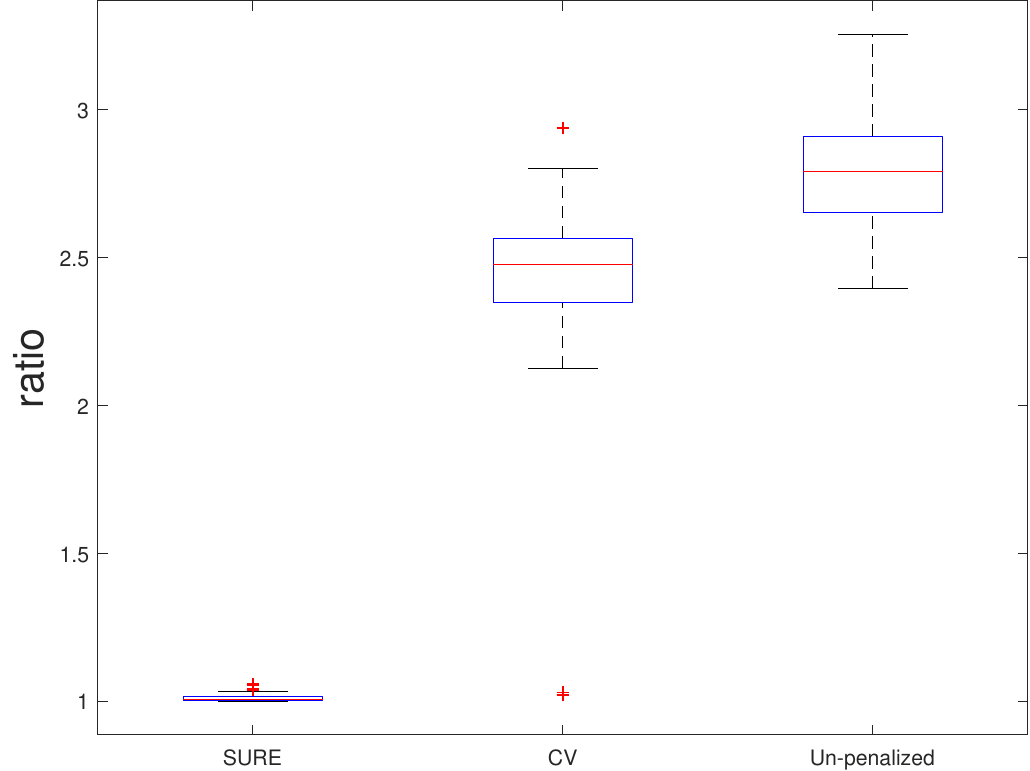}
                \label{fig:cvx_n_500_d_10}
                }
        \caption{Boxplots of the SURE ratio, the CV ratio and un-penalized ratio (from left to right) for multivariate convex regression.}
        \label{fig:cvx_ratio}\vspace{-2mm}
\end{figure}

We generate $n$ i.i.d.~design points $\bx_{i} \sim \text{Unif}[-1,1]^d$, for $i=1,\ldots,n$. We set the convex regression function $f: \mathbb{R}^d \rightarrow \mathbb{R}$ to be $f(\bx)=\|\bx\|_2^2$, which is symmetric around $\mathbf{0}$. We generate the response $y_i$, for $i=1,\ldots,n$, according to model~\eqref{eq:RegMdl2} with $\sigma=0.5$. Let $\bt^* = (f(\bx_1),\ldots, f(\bx_n))$. We estimate $\bt^*$ by solving the penalized multivariate convex regression problem described in~\eqref{eq:LSE_Lip} using the SDPT3 package \citep{SDPT3}. %CVX package \citep{cvx} %(with the SeDuMi optimization software; see http://cvxr.com/cvx/ for more information)
%to compute the penalized multivariate convex regression estimator, defined in~\eqref{eq:LSE_Lip}. %Although ADMM is one of the most scalable methods for solving \eqref{eq:LSE_Lip} in modern optimization literature, it still cannot be easily scaled-up since the problem in \eqref{eq:LSE_Lip} is very challenging with a large number of constraints and decision variables (i.e., $n(d+1)$ decision variables along with $n(n-1)$ constraints). Therefore, in our experiment, we only consider some smaller $n$ and $d$ for the illustration of the performance of SURE. It will be an interesting future work to develop fast and scalable optimization approach specialized for  multivariate convex regression.
We note that since the optimization problem for penalized multivariate convex regression (in \eqref{eq:LSE_Lip}) has a lot of constraints and many variables (i.e., $n(n-1)$ constraints and $nd$ variables), we only consider smaller sample sizes ($n$) in our simulation experiments. Nevertheless, a smaller $n$ is still sufficient to demonstrate the superior performance of the estimator tuned by minimizing SURE.  In particular, we consider $d=4$ and $10$, $n=100$ and $500$, and compute the SURE ratio $L_n(\widehat{\lambda})/L_n(\lambda^*)$, where $\widehat{\lambda}$ is defined as in  \eqref{eq:SURE_tuned}. We compare the SURE ratio to the CV ratio, where $\lambda$ is selected by 5-fold cross-validation. We note that when implementing the CV method, for a given training set $\mathcal{T}_{\text{tr}}$, the estimated function value at any $\bx$ is set to
\begin{equation}\label{eq:linear_interp}
\hat{f}(\bx) = \max_{\bx_i \in \mathcal{T}_{\text{tr}}} \left( \hat{\theta}_{\lambda, i} + (\bx- \bx_i)^\top \hat{\bxi}_{\lambda, i} \right),
\end{equation}
where $\hat{\theta}_{\lambda, i}$ and $\hat{\bxi}_{\lambda, i}$ are solutions of the penalized multivariate convex regression problem in \eqref{eq:LSE_Lip}. The constructed $\hat{f}: \mathbb{R}^d \rightarrow \mathbb{R}$ is clearly a (piecewise affine) convex function; see Section 6.5.5 in \cite{Boyd:2004}. We also include the ``un-penalized ratio" $L_n(0)/L_n(\lambda^*)$ as a competitor, i.e., the ratio between the loss obtained from the un-penalized multivariate convex regression estimator as defined in \eqref{eq:multi_cvx} and the oracle loss.  %We also conduct simulation studies for other settings of $d$ and $n$ and present the results in Section \ref{sec:add_cvx} in the supplementary material due to space constraints.

We present the results in the form of boxplots in Figure \ref{fig:cvx_ratio}, obtained from 100 independent replicates of $\by$ (fixing the design variables). We observe that penalized multivariate convex regression, with the regularization parameter tuned by SURE, has better performance. As we had inferred from Figure~\ref{fig:iso_ratio}, Figure~\ref{fig:cvx_ratio} also shows that the SURE ratios are much smaller than both the CV ratios and un-penalized ratios and their difference is more pronounced as the dimension $d$ increases. Further, the SURE ratio concentrates near 1 suggesting that SURE is doing a very good job in selecting the tuning parameter. %\xnote{shortened here!}

\subsection{SURE Without the Knowledge of $\sigma^2$}
\label{sec:exp_sigma}

\begin{table}[!t]
\centering
\caption{Comparison of the different tuning parameter selection methods for isotonic regression: the unbounded ratio, the CV ratio, the SURE ratio with known $\sigma^2$, and the SURE ratio with estimated $\hat{\sigma^2}$. The standard errors are provided in parenthesis.}
\label{tab:CV_iso}
{\small
\vspace{0.5cm}
  \begin{tabular}{|c c p{2.5cm} p{2.5cm} p{3cm} p{3cm}|} \hline
    $n$ & $d$            & Unbounded & CV & SURE  known $\sigma^2$  & SURE est $\hat{\sigma^2}$ \\ \hline \hline
    \multirow{3}{*}{100} & 2 & 3.09 (0.86) & 1.28 (0.23) & 1.27 (0.22) & 1.28 (0.23)  \\
                         & 5 & 2.66 (0.37) & 1.12 (0.11) & 1.11 (0.14) & 1.47 (0.15)  \\
                         & 10 & 1.76 (0.25) & 1.55 (0.17) & 1.09 (0.11) & 1.62 (0.17) \\\hline
    \multirow{3}{*}{1000} & 2 & 2.42 (0.50) & 1.07 (0.10) & 1.10 (0.12) & 1.22 (0.15) \\
                          & 5 & 2.35 (0.18) & 1.04  (0.03) & 1.03 (0.05) & 1.04 (0.06) \\
                          & 10 & 1.80 (0.07) & 1.55 (0.05)  & 1.02 (0.02) & 1.48 (0.04) \\ \hline
  \end{tabular}
}
\end{table}

\begin{table}[!t]
\centering
\caption{Comparison of the different tuning parameter selection methods for convex regression: the un-penalized ratio, the CV ratio, the SURE ratio with known $\sigma^2$, and the SURE ratio with estimated $\hat{\sigma^2}$. The standard errors are provided in parenthesis.}
\label{tab:cvx}
{\small
\vspace{0.5cm}
  \begin{tabular}{|c c p{2.5cm} p{2.5cm} p{3cm} p{3cm}|} \hline
    $n$ & $d$ & Un-penalized & CV &  SURE  known $\sigma^2$  & SURE est $\hat{\sigma^2}$ \\ \hline \hline
    \multirow{3}{*}{100} & 2 & 2.74 (1.12) & 1.68 (0.52) & 1.35 (0.32) & 1.46 (0.39)  \\
                         & 3 & 3.22 (0.86) & 1.42 (0.30) & 1.12 (0.22) &  1.15 (0.23)  \\
                         & 5 & 3.62 (0.53) & 1.14 (0.25) & 1.04 (0.15) & 1.30  (0.18)  \\\hline
    \multirow{3}{*}{500} & 2 & 2.77 (0.98) & 1.20 (0.32) & 1.07 (0.11) & 1.22 (0.12) \\
                          & 3 & 3.47 (0.74) & 1.51 (0.29) & 1.38 (0.08) & 1.49 (0.08) \\
                          & 5 & 3.91 (0.50) & 1.40 (0.18) & 1.05 (0.05) & 1.05  (0.06) \\ \hline
  \end{tabular}
  }
\end{table}

{In this section, we assume that the noise variance $\sigma^2$ in unknown. To estimate $\sigma^2$ we adopt a method proposed in~\cite{MW00} and then apply SURE with the estimated $\sigma^2$. In particular, we first obtain an initial estimator $\hat{\bt}$ using unbounded isotonic regression (or un-penalized convex regression) and then estimate $\sigma^2$ by $\hat{\sigma^2}=\frac{\|\hat{\bt}-\by\|_2^2}{n-2D(\by)}$, where $D(\by)$ is the divergence of the initial estimator $\hat{\bt}$. The rationale for this choice comes from~\citet[Corollary 1]{MW00} where the authors study (unbiased) estimators for $\sigma^2$ in the setup of~\eqref{eq:SeqMdl}. The averaged ratios $L_n(\widehat{\lambda})/L_n(\lambda^*)$ over 100 independent runs for different tuning parameter selection methods are provided in Table~\ref{tab:CV_iso} (for isotonic regression) and Table~\ref{tab:cvx} (for convex regression).  %One can see that  the SURE with  true $\sigma$ significantly outperforms other methods and SURE with $\hat{\sigma}$ and the CV method are comparable.
For convex regression, the SURE with unknown ${\sigma^2}$ outperforms CV in most cases, whereas  for isotonic regression CV performs better in some cases. Moreover, we point out the SURE is computationally more efficient than CV. In particular, 5-fold CV needs to solve five optimization problems for each value of the tuning parameter; thus the SURE method is about five times faster. Moreover, the standard errors of SURE are comparable to those errors of the CV method, and are smaller than the errors for the unbounded and un-penalized cases.
}

\newtheorem*{rep@theorem}{\rep@title}
\newcommand{\newreptheorem}[2]{%
	\newenvironment{rep#1}[1]{%
		\def\rep@title{#2 \ref{##1}}%
		\begin{rep@theorem}}%
		{\end{rep@theorem}}}
\makeatother
\newreptheorem{theorem}{Theorem}
\newreptheorem{lemma}{Lemma}
\newreptheorem{corollary}{Corollary}
\newreptheorem{proposition}{Proposition}

\newcounter{scnt}[section]
\renewcommand{\thercnt}{(\roman{scnt})}
\setcounter{equation}{50}
\numberwithin{figure}{section}
\renewcommand\thesection{\Alph{section}}

\def\spacingset#1{\renewcommand{\baselinestretch}%
	{#1}} \spacingset{1.4}

%%%%%%%%%%%%%%%%%%%%%%%%%%%%%%%%%%%%%%%%%%%%%%%%%%%%%%%%%%%%%%%%%%%%%%%%%%%%%%
\newpage
\if1\blind
{
	\title{\bf Supplement to On Degrees of Freedom of Projection Estimators with Applications to Multivariate Nonparametric Regression}
	\author{Xi Chen \hspace{.2cm}\\
		Stern School of Business, New York University \\
		and \\
		Qihang Lin \\ %\thanks{Xi would like to thank Google faculty research award for supporting this work.} \\
		Tippie College of Business, University of Iowa \\
		and \\
		Bodhisattva Sen \footnote{Supported by NSF Grant DMS-1150435} \\
		Department of Statistics, Columbia University
	}
	\date{}
	\maketitle
} \fi

\if0\blind
{
	\bigskip
	\bigskip
	\bigskip
	\begin{center}
		{\LARGE\bf  Supplement to On Degrees of Freedom of Projection Estimators with Applications to Multivariate Nonparametric Regression}
	\end{center}
	\medskip
} \fi

The supplementary material is organized as follows:
\begin{enumerate}
	\item In Section \ref{sec:convex}, we provide the necessary background on convex analysis, which will be heavily used in our proofs.
	
	% \item In Section \ref{sec:uni}, as warm-up exercises, we provide the DF of univariate isotonic regression and the DF of univariate convex regression are derived in Section \ref{sec:uni}.
	
	%   we provide the proofs of the results in the background section, including the proofs of Lemma \ref{lem:aff_hull} (in Section \ref{sec:proof_aff_lem}), Lemma \ref{lem:project_NF} (in Section \ref{sec:proof_lem_proj}). We also provide definitions and graphical illustrations of concepts in convex analysis that are used in our proofs (in Section \ref{sec:def_conv}). %Further,
	
	\item In Section \ref{sec:supp_main}, we provide some results used in the proof of our main theorem --- Theorem \ref{thm:div}. In particular, we provide proofs of Lemma \ref{lem:bounded} (in Section \ref{sec:proof_bounded}), Lemma \ref{lem:div} (in Section \ref{sec:proof_local_lemma}), and Theorem \ref{thm:div} (in Section \ref{sec:supp_theorem_div}). A simple sanity check for Theorem~\ref{thm:div} is given in Section~\ref{sec:SanityCheck}.
	%We also provide the statement of implicit function theorem (Lemma \ref{thm:implicit}) for the purpose of completeness.
	
	Moreover,  we provide a concrete example to highlight the difference between our result Theorem \ref{thm:div} for  the $\lambda>0$ case and the previous results on the divergence of projection estimators (see Section \ref{sec:supp_nontrivial}).
	
	\item In Section \ref{sec:supp_DFIso}, we provide proofs of the results for (bounded) isotonic regression, including the proofs of Proposition \ref{prop:uni_bound_iso} (in Section  \ref{sec:proof_uni_bound_iso}), Proposition \ref{prop:threshold_gen} (in Section  \ref{sec:proof_DFIso}), and Theorem \ref{thm:iso_bounded_monotone} (in Section \ref{sec:supp_monotone}). % In Section \ref{sec:toy}, we provide a toy example to illustrate  Proposition \ref{prop:uni_bound_iso} for computing the DF of (bounded) isotonic regression.
	
	\item In Section \ref{sec:proof_other_app}, we provide the proofs of Proposition \ref{prop:fused_add} (DF for additive models; see Section \ref{sec:proof_additive}) and Corollary \ref{thm:grouplasso_gen} (DF for generalized group Lasso; see Section \ref{sec:proof_group}). In Section \ref{sec:recover_other}, we apply our general theorem to recover several well-known results on the DF including  Lasso, generalized Lasso, linear regression, and ridge regression.
	
	%  \item In Section \ref{sec:exp_sigma}, we provide additional simulation studies on SURE tuning with estimated $\sigma^2$.
	%  of Corollaries \ref{thm:div_lasso_gen} -- \ref{thm:div_ridge}, which give the DFs for other applications such as generalized Lasso, $\ell_\infty$-regularized group Lasso, and ridge regression.
	
	%\item In Section \ref{sec:exp}, we provide simulation studies on (bounded) isotonic regression and (penalized) convex regression to demonstrate the superiority of the SURE-based tuning parameter selection procedure.
\end{enumerate}

\section{Background Knowledge on Convex Analysis}
\label{sec:convex}

We start with some definitions and notations. We denote by $\langle \cdot, \cdot \rangle$ the usual inner product in Euclidean spaces. Recall that a set $\C\subseteq \mathbb{R}^n$ is a \emph{convex polyhedron} if it can be represented as in~\eqref{eq:CvxPoly} for some known matrix $B:=[\bb_1, \ldots, \bb_m]^\top \in \mathbb{R}^{m \times n}$ and a vector $\mathbf{c} := [c_1,\ldots, c_m]^\top \in \R^{m \times 1}$. When $\mathbf{c}=\mathbf{0}$, it becomes a \emph{polyhedral cone} (denoted by $\mathcal{K}$), which is the intersection of finitely many halfspaces that contain the origin and can be represented as,
\begin{eqnarray}  \label{eq:CvxCone}
\mathcal{K} = \{\btheta \in \R^n: B \btheta \leq \mathbf{0}\}.
\end{eqnarray}

%A vector $\mathbf{v}$ is called a \emph{ray} of a polyhedron $\C$ if, for any $\bt \in \C$ and any $\lambda>0$, $\bt+\lambda \mathbf{v} \in \C$. A ray $\mathbf{v}$ of $\C$ is an \emph{extreme ray} if, given $\lambda\in(0,1)$ and rays $\mathbf{v}_1$ and $\mathbf{v}_2$ of $\C$, $\mathbf{v}=\lambda \mathbf{v}_1+(1-\lambda)\mathbf{v}_2$ only if $\mathbf{v}_1=\mathbf{v}_2=\mathbf{v}$. In other words, an extreme ray cannot be represented as a convex combination of other rays.

A finite collection of vectors $\bt_1,\bt_2,\dots,\bt_k\in\mathbb{R}^n$ is \emph{affinely independent} if the only unique solution to the equality system $\sum_{i=1}^k\alpha_i\bt_i=0$ and $\sum_{i=1}^k\alpha_i=0$ is $\alpha_i=0$, for $i=1,2,\dots,k$. The \emph{dimension}  of $\C$ (denoted by $\text{dim}(\C)$) is the maximum number of affinely independent points in $\C$ minus one. We say that $\C$ has \emph{full dimension} if $\text{dim}(\C)=n$. The \emph{affine hull} of $\C$, denoted by $\text{aff}(\C)$, is the \emph{affine space} consisting of all affine combinations of elements of $\C$, i.e.,
$
\text{aff}(\C):=\left\{\sum_{i=1}^k\alpha_i\bt_i : k>0,\bt_i\in \C,\alpha_i \in \R,\sum_{i=1}^k\alpha_i=1\right\}.
$
Note that $\C$ has full dimension if and only if $\text{aff}(\C)=\mathbb{R}^n$.

For a given convex polyhedron $\C$ in the form of  \eqref{eq:CvxPoly},
a nonempty subset $F \subseteq \C$ is called a \emph{face} of $\C$ if there exists $J\subseteq \{1,2,\dots,m\}$ so that $\vspace{-0.07in}$
\begin{equation}\label{eq:F}
F=\{\bt \in \C: \langle \bb_i,\btheta \rangle = c_i,  \; \forall \, i \in J \}. \vspace{-0.07in}
\end{equation}
%Note that the same face can be defined by different $J$'s.
A point $\bt\in\C$ can belong to more than one face. The smallest face of $\C$ containing $\bt$, in the sense of set inclusion, is called the \emph{minimal face containing} $\bt$. The following lemma characterizes the affine hull of a face of a polyhedron.

\begin{lemma}\label{lem:aff_hull}
	For any face $F$ of $\C$ in \eqref{eq:CvxPoly}, let $J_F=\{ i \in \{1, \ldots, m\}: \langle \bb_i,\btheta \rangle = c_i, \; \forall \; \bt \in F\}$. Then the affine hull of $F$ can be represented as
	$
	\mathrm{aff}(F)=\left\{\bt\in\mathbb{R}^n: \langle \bb_i,\btheta \rangle = c_i, \; \forall \, i \in J_F \right\}.
	$
\end{lemma}

\begin{proof}[Proof of Lemma~\ref{lem:aff_hull}]
	Suppose that $\bt \in\text{aff}(F)$, i.e., $\btheta=\sum_{j=1}^k\alpha_j\btheta_j$ where $k>0$, $\btheta_j\in F$, $\alpha_j\in\mathbb{R}$ and $\sum_{j=1}^k\alpha_j=1$. For any $i \in J_F $, $\langle \bb_i,\btheta \rangle=\sum_{j=1}^k\alpha_j\langle \bb_i,\btheta_j \rangle=\sum_{j=1}^k\alpha_jc_i=c_i$. Therefore, the inclusion $\subseteq$ follows.
	
	Suppose $\btheta$ satisfies $\langle \bb_i,\btheta \rangle = c_i$ for all $i \in J_F$. We claim that there exists $\btheta'\in F$ such that $\langle \bb_i,\btheta' \rangle < c_i$ for all $i \in J_F^c$. In fact, by the definition of maximal index set $J_F$, there exists $\btheta_i\in F$ for each $i \in J_F^c$ such that $\langle \bb_i,\btheta_i \rangle < c_i$. Then, $\btheta'$ can be chosen as $(\sum_{i\in J_F^c}\btheta_i)/|J_F^c|\in F$. If $\btheta=\btheta'$, $\btheta$ belongs to $F\subseteq \text{aff}(F)$.  If $\btheta\neq\btheta'$, there exists a sufficiently small $\epsilon>0$ such that $\btheta_{\epsilon}:=\epsilon\btheta+(1-\epsilon)\btheta'$ satisfies $\langle \bb_i,\btheta_{\epsilon} \rangle = c_i$ for all $i \in J_F$ and  $\langle \bb_i,\btheta_{\epsilon} \rangle \leq c_i$ for all $i \in J_F^c$. Hence, $\btheta_{\epsilon}\in F$ which implies that $\btheta=\btheta_{\epsilon}/\epsilon+(\epsilon-1)\btheta'/\epsilon\in \text{aff}(F) $.  Therefore, the inclusion $\supseteq$ follows.
\end{proof}

\begin{figure}[!t]
	\centering
	\includegraphics[width=0.4\textwidth]{./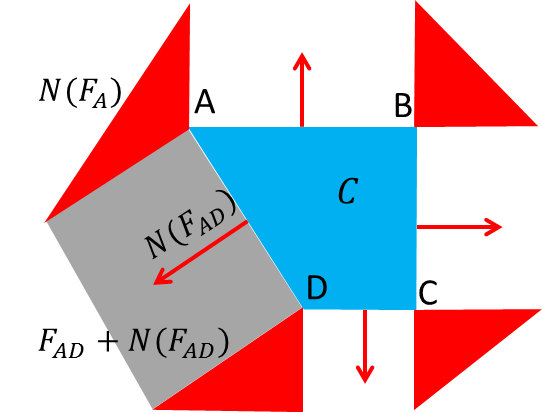}
	\caption{Illustration of the normal cones of a polyhedron: The four vertices of the polyhedron $\mathcal{C}$ are denoted by $A$, $B$, $C$ and $D$, respectively. We denote each face of $\mathcal{C}$ by its vertices, e.g., $F_{AD}$ denotes the line segment connecting $A$ and $D$ (one-dimensional face) while $F_A$ denotes the vertex $A$ (zero-dimensional face). The normal cone of all one-dimensional faces have been depicted by the red arrows while the normal cone of all zero-dimensional faces are depicted by the red conic regions. The grey area corresponds to $F_{AD}+N(F_{AD})$.}
	\label{fig:NormalCone}
\end{figure}

The \emph{normal cone} associated with a face $F$ is defined as
\begin{equation}\label{eq:normal_cone}
N(F):=\left\{\bh\in\mathbb{R}^{n}: F\subseteq \argmax_{\btheta\in \C} \bh^\top\btheta\right\}.
\end{equation}
From a geometric perspective, the normal cone of $F$ is the set of directions in $\mathbb{R}^n$ that are perpendicular to $F$ and point outward from $\C$ (see an illustration in Figure \ref{fig:NormalCone}). In this paper, we will often deal with the polyhedron $F+N(F)=\{\bt + \bh: \bt \in F, \bh \in N(F)\}$, which consists of all points in $\mathbb{R}^n$ that can be reached by moving a point in $F$ along a direction in $N(F)$. As a consequence, the projection of a point in $F+N(F)$ onto $\mathcal{C}$ will lie on the face $F$ of $\mathcal{C}$, which is stated as the following lemma.
\begin{lemma}\label{lem:project_NF}
	Let $F$ be a face of $\C$. For any $\bz \in F+N(F)$, $P_\C(\bz) \in F$, where the operator $P_\C(\cdot)$ is defined in \eqref{eq:Proj}.
	%$\hat\bx=\argmin_{\by\in\mathcal{C}}\frac{1}{2}\|\by-\bx\|_2^2$. We have $\hat\bx\in F$.
\end{lemma}

\begin{proof}[Proof of Lemma~\ref{lem:project_NF}]
	Since $\bz \in F+N(F)$, there exist $\bz'\in F$ and $\bh\in N(F)$ such that $\bz=\bz'+\bh$. Since $\hat\bz := P_C(\bz)$ is the optimal solution of $\min_{\btheta\in\C}\|\btheta-\bz\|_2^2$, by the optimality condition (see e.g.,~\citet[Proposition 4.7.1]{bertsekas03}), we have
	\[
	\langle \hat\bz-\bz ,\bt-\hat\bz\rangle = \langle \hat\bz-\bz'-\bh ,\bt -\hat\bz \rangle \geq0
	\]
	for any $\bt\in\C$. Choosing $\bt=\bz'$ in the inequality above, we have
	\[
	\langle \bh ,\hat\bz-\bz'\rangle\geq \|\hat\bz-\bz'\|_2^2.
	\]
	As $\bh\in N(F)$, $\bz'\in F\subseteq \argmax_{\btheta\in \C} \bh^\top\bt$, which implies $\langle\bh ,\hat\bz-\bz'\rangle\leq 0$, again appealing to the optimality condition. This, together with the above display implies $\hat\bz=\bz'\in F$.
\end{proof}

%Given a convex polyhedron $\mathcal{C}$, the following standard concepts from convex analysis will also be used throughout the paper: (1) \emph{interior} of $\C$, denoted by $\text{int}(\C)$; (2) \emph{boundary} $\text{bd}(\C)$ of $\C$; (3)  the \emph{relative interior} $\text{relint}(\C)$ of $\C$ (defined as the interior within $\text{aff}(\C)$); (4) the \emph{relative boundary} $\text{relbd}(\C)$ of $\C$ (defined as its boundary within $\text{aff}(\C)$). For the purpose of completeness, we include the definitions of these concepts in Section \ref{sec:def_conv} of the supplementary material and the reader may refer to \cite{Conv:01} for more details.

%If $F_J$ is nonempty, it is called a face of $C$. Note that the same face can be defined by different $J$'s. %Among all $J$'s defining $F$, we call the maximal one in the sense of set inclusion the \emph{maximal index set} of $F$, which is unique.
%it is the maximal set in the sense of inclusion

In additional to the normal cone, some other useful concepts from convex analysis are defined in the following. Given a convex polyhedron $\mathcal{C}$,  the \emph{interior} of $\C$, denoted by $\text{int}(\C)$, is defined as
$$
\text{int}(\C):= \left\{ \bt \in \C : \exists \,\epsilon > 0 \mbox{ such that } B_\epsilon(\bt) \subseteq \C \right\},
$$
where $B_\epsilon(\bt)=\{\mathbf{x} \in\mathbb{R}^n:\|\mathbf{x}-\bt\|_2\leq\epsilon\}$ is the Euclidean ball of radius $\epsilon$ centered at $\bt$. The \emph{boundary} $\text{bd}(\C)$ of $\C$ is defined as
$$
\text{bd}(\C):= \left\{ \bt\in\mathbb{R}^n : \forall \; \epsilon > 0, \; \C \cap B_\epsilon(\bt)\neq\emptyset \text{ and}\left(\mathbb{R}^n \backslash\C\right) \cap B_\epsilon(\bt)\neq\emptyset\right\}.
$$
The \emph{relative interior} $\text{relint}(\C)$ of $\C$ is defined as its interior within $\text{aff}(\C)$, i.e.,
$$
\text{relint}(\C):= \left\{ \bt \in \C : \exists \,\epsilon > 0 \mbox{ such that } B_\epsilon(\bt) \cap \operatorname{aff}(\C) \subseteq \C \right\}.
$$
Similarly, the \emph{relative boundary} $\text{relbd}(\C)$ of $\C$ is defined as its boundary within $\text{aff}(\C)$, i.e.,
$$
\text{relbd}(\C):= \left\{ \bt \in \operatorname{aff}(\C)  : \forall\epsilon > 0, \C \cap B_\epsilon(\bt)\neq\emptyset \text{ and}\left(\text{aff}(\C)\backslash\C\right) \cap B_\epsilon(\bt)\neq\emptyset\right\}.
$$

Consider a polyhedron of a higher dimension defined in \eqref{eq:Q}. Similar to \eqref{eq:F}, the face of $\Q$ is
a nonempty subset $F \subseteq \Q$ if there exists $J\subseteq \{1,2,\dots,m\}$ so that
\begin{equation}\label{eq:F_Q}
F=\{(\bxi,\bt) \in \Q: \langle \ba_i,\bxi \rangle +\langle \bb_i,\btheta \rangle = c_i,  \; \forall \, i \in J \}.
\end{equation}
%Given a set $\Q\subset\mathbb{R}^{p+n}$, each element in $\Q$ can be represented as $(\bxi^\top,\bt^\top)^\top$ where $\bxi \in\mathbb{R}^p$ and $\bt\in\mathbb{R}^n$.
The \emph{projected polyhedron} of $\Q$ onto the subspace of $\bt$ is defined in~\eqref{eq:proj_C} which is also a polyhedron.
We also note that although $\Proj_{\bt}(\Q)$ is a polyhedron, it is usually not easy to express it explicitly as a set of inequalities as in \eqref{eq:CvxPoly}.
%It can be shown that (see e.g.,~\citet{Balas98}, \citet{balas05})
%\begin{equation}\label{eq:proj_Q_general}
%\Proj_{\bt}(\Q)=\left\{\bt\in\mathbb{R}^n: \bv^\top B\bt\leq \bv^\top\bc, \;\forall\, \bv\in\text{extr}(W)\right\},
%\end{equation}
%where $W=\{\bv\in\mathbb{R}^m:\bv^\top A=0,\bv\geq0\}$ and $\text{extr}(W)$ is the set of extreme rays of $W$. Since $W$ is a polyhedral cone, which contains only finitely many extreme rays, $\Proj_{\bt}(\Q)$ in \eqref{eq:proj_Q_general} is defined by finitely many inequalities and is thus a polyhedron.
%
In addition to the projected polyhedron, we also introduce the restricted polyhedron as follows. The \emph{restriction} of $\Q$ on the space of $\bt$ at point $\bxi$ is defined as
\begin{equation}\label{eq:restriction}
R_{\bxi}(\Q):=\{\bt\in\mathbb{R}^n: (\bxi ,\bt ) \in \Q \},
\end{equation}
which is also a polyhedron. When $\bxi=\mathbf{0}$, we will omit $\bxi$ in the subscript and denote the restriction of $\Q$ at the point $\mathbf{0}$ by $R(\Q)$. The restriction of a polyhedron is not necessarily the same as the projection of it, even when $\bxi=\mathbf{0}$; see Figure~\ref{fig:PR}  for a visual illustration of the difference between $\Proj_{\bt}(\Q)$ and $R_{\bxi}(\Q)$.

\begin{figure}[!t]
	\centering
	\includegraphics[width=0.3\textwidth]{./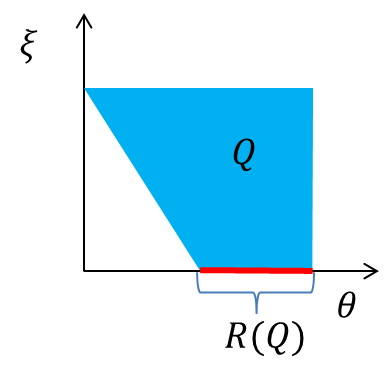}
	\includegraphics[width=0.3\textwidth]{./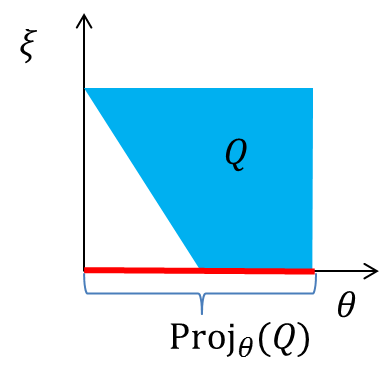}
	\caption{An illustration of the difference between projection and restriction, where both $\bxi$ and $\bt$ are one dimensional. The restriction of $\Q$ on $\bt$  when $\bxi=\mathbf{0}$ is depicted by the red line segment in the figure on the left while the projection on $\bt$ is marked by the red line segment in the figure on the right. This example is taken from~\citet{balas05}.}
	\label{fig:PR}\vspace{-3mm}
\end{figure}

\section{Proof of Results and Additional Material for Section \ref{sec:main}}
\label{sec:supp_main}

\subsection{Proof of Lemma \ref{lem:bounded}}
\label{sec:proof_bounded}
Let us recall the objective function,
\begin{eqnarray}\label{eq:supp_LSE}
(\hbt(\by), \hbxi(\by)) & \in &  \argmin_{\bt, \bxi}\frac{1}{2} \|\bt-\by\|_2^2 + \bd^\top\bxi+ \frac{\lambda}{2}\|\bxi\|_2^2\\
& & \;\;\; \text{s.t.} \;   A\bxi + B\bt \leq \bc. \nonumber
\end{eqnarray}

\begin{replemma}{lem:bounded}
	When $\lambda=0$, the optimization problem in~\eqref{eq:supp_LSE} has a bounded optimal value if and only if $-\bd=A^\top\bu$ for some $\bu\geq\mathbf{0}$.
\end{replemma}
\begin{proof}[Proof of Lemma \ref{lem:bounded}]
	Suppose  $-\bd=A^\top\bu$ for some $\bu\geq\mathbf{0}$. For any $(\bt, \bxi)$ satisfying $ A \bxi + B \bt \leq \bc$, the objective value of \eqref{eq:supp_LSE} is bounded from below as
	$$
	\frac{1}{2}\|\bt-\by\|_2^2+\bd^\top\bxi = \frac{1}{2}\|\bt-\by\|_2^2-\bu^\top A\bxi\geq\frac{1}{2}\|\bt-\by\|_2^2-\bu^\top(\bc-B \bt).
	$$
	As a strongly convex quadratic function of $\bt$, $\frac{1}{2}\|\bt-\by\|_2^2-\bu^\top(\bc-B \bt)$ is always bounded from below for any $\bt$. So is $\frac{1}{2}\|\bt-\by\|_2^2+\bd^\top\bxi$.
	
	Suppose $-\bd\neq A^\top\bu$ for any $\bu\geq\mathbf{0}$. According to Farkas's lemma (see e.g.,~\citet[Corollary 22.3.1]{Rockafellar70book}), there exists $\bh\in \mathbb{R}^p$ such that $A\bh\geq\mathbf{0}$ and $-\bd^\top\bh<0$. Given any feasible solution $(\bxi,\bt)$ for \eqref{eq:supp_LSE}, $(\bxi-t\bh,\bt)$ will also be a feasible solution for any $t\geq0$, whose objective value is
	$$
	\frac{1}{2}\|\bt-\by\|_2^2+\bd^\top(\bxi-t\bh)=\frac{1}{2}\|\bt-\by\|_2^2+\bd^\top\bxi-t\bd^\top\bh,
	$$
	which approaches $-\infty$ as $t$ increases to infinity. Therefore, \eqref{eq:supp_LSE} will not have a bounded optimal value.
\end{proof}

\subsection{Proof of Lemma \ref{lem:div}}
\label{sec:proof_local_lemma}

In this section, we provide the proof of our key technical lemma --- Lemma \ref{lem:div}.
\begin{replemma}{lem:div}%\label{lem:div_mul_cvx_gen}
	Suppose $-\bd=A^\top\bu$ for some $\bu\geq\mathbf{0}$ whenever $\lambda=0$ in \eqref{eq:LSE}.  For any
	$\by \in \R^n$, let $(\hbt(\by), \hbxi(\by)) $ be any solution for \eqref{eq:LSE} and let
	the index set $J_\by$ be as defined in \eqref{eq:J_y_2}. For a.e.~$\by \in \R^n$,
	\begin{eqnarray}\label{eq:eq_div_mul_cvx_gen}
	\hbt(\bz) = \widetilde{\bt}(\bz), \text{ for any }\bz\text{ in a neighborhood }U\text{ of }\by,
	\end{eqnarray}
	where $\widetilde{\bt}(\bz)$ is defined as the unique $\bt$-component of the optimal solution of the following optimization problem:
	\begin{eqnarray}\label{eq:aff_LSE}
	(\widetilde{\bt}(\bz),\widetilde{\bxi}(\bz))  & \in & \argmin_{\bt, \bxi}\frac{1}{2} \|\bt-\bz\|_2^2 +\bd^\top\bxi+ \frac{\lambda}{2}\|\bxi\|_2^2\\
	& &\;\;\mathrm{s.t.} \; A_{J_\by} \bxi + B_{J_\by} \bt = \bc_{J_\by}. \nonumber
	\end{eqnarray}
	%where $I_\by$ is the index set defined in Theorem \ref{thm:div_mul_cvx_gen}.
\end{replemma}

We first introduce the following lemma.

\begin{lemma}
	\label{lem:minimal_face_relint}
	Suppose that $\Q$ is a convex polyhedron in $\R^{p +n}$ defined as \eqref{eq:Q} and $(\hbxi,\hbt)\in \Q$. Let
	$
	J:=\{1 \leq i \leq m: \langle \ba_i, \hbxi \rangle +  \langle \bb_i, \hbt \rangle =c_i \}.
	$
	Then, $(\hbxi,\hbt)\in\text{relint}(F)$, where
	$
	F=\{(\bxi,\bt) \in\mathbb{R}^{p+n} : A_J\bxi+B_J\bt=\bc_J,~A_J\bxi+B_J\bt\leq\bc_J\}.
	$
\end{lemma}

\begin{proof}[Proof of Lemma \ref{lem:minimal_face_relint}]
	Let $J^c$ be the complement set of $J$, namely, $J^c:=\{1,2,\dots, m\}\backslash J$. By the defining of $J$, we have
	$A_{J^c}\hbxi+B_{J^c}\hbt<\bc_{J^c}$ so that there exists a small enough $\epsilon>0$ such that  $A_{J^c}\bxi+B_{J^c}\bt<\bc_{J^c}$ for any $(\bxi,\bt)\in B_{\epsilon}(\hbxi,\hbt)$. According to Lemma~\ref{lem:aff_hull},
	$$
	\text{aff}(F)=\{(\bxi,\bt)\in\mathbb{R}^{p+n} : A_{J}\bxi+B_{J}\bt=\bc_{J}\}
	$$
	so that $B_\epsilon(\hbxi,\hbt) \cap \operatorname{aff}(F) \subseteq F$. Hence, by definition, $(\hbxi,\hbt)\in \text{relint}(F)$.
\end{proof}

We are now ready to prove Lemma~\ref{lem:div}.
\begin{proof}[Proof of Lemma~\ref{lem:div}]
	Since  $-\bd=A^\top\bu$ for some $\bu\geq\mathbf{0}$ whenever $\lambda=0$ in \eqref{eq:LSE}, the optimization problem in~\eqref{eq:LSE} has a bounded optimal value for any $\by$ according to Lemma~\ref{lem:bounded} and hence $(\hbt(\by), \hbxi(\by)) $ is well-defined.
	
	Before we prove this lemma, we first provide the KKT conditions of the minimization problem \eqref{eq:LSE}. Let $\hat\bu\in \mathbb{R}^m$ be the Lagrange multiplier for the $m$ constraints in \eqref{eq:LSE} and $J_\by$ be as defined in \eqref{eq:J_y_2}. Note that $(\hbt(\by)$, $\hbxi(\by))$ and $\hat\bu$ must satisfy
	\begin{eqnarray}\label{eq:lang_system}
	\hbt(\by)-\by+B_{J_{\by}}^\top\hat\bu_{J_{\by}}=0,&&\lambda\hbxi(\by)+\bd+A_{J_{\by}}^\top\hat\bu_{J_{\by}}=0,\\
	A_{J_{\by}}\hbxi(\by)+B_{J_{\by}}\hbt(\by)=\bc_{J_{\by}},&&A_{J_{\by}^c}\hbxi(\by)+B_{J_{\by}^c}\hbt(\by)\leq \bc_{J_{\by}^c}, \nonumber \\
	\widehat{\bu}_{J_{\by}}\geq\mathbf{0},&& \widehat{\bu}_{J_{\by}^c}=\mathbf{0},\nonumber
	\end{eqnarray}
	where $\widehat{\bu}_{J_{\by}}$ and $\widehat{\bu}_{J_{\by}^c}$ are sub-vectors of $\hat\bu$ indexed by $J_{\by}$ and $J_{\by}^c$, respectively. We prove this lemma in two cases: $\lambda=0$ and $\lambda>0$.
	
	\noindent \textbf{Case 1: $\lambda=0$.} Given any face $F$ of $\Q$, $\Proj_{\bt}(F)+R_{-\bd}(N(F))$ is itself a polyhedron in $\mathbb{R}^n$ so that its boundary $\text{bd}(\Proj_{\bt}(F)+R_{-\bd}(N(F)))$ is a measure zero set in $\mathbb{R}^n$. Since $\Q$ has finitely many faces, the set
	\begin{equation}\label{eq:y_bd_lem2}
	\underset{F\text{ is a face of }\Q}{\bigcup}\text{bd}\bigg(\Proj_{\bt}(F)+R_{-\bd}(N(F))\bigg)
	\end{equation}
	has measure zero in $\mathbb{R}^n$. Therefore, to prove this lemma, it suffices to show that, for any $\by$ not in \eqref{eq:y_bd_lem2}, there is an associated neighborhood $U$ of $\by$ such that $\hbt(\bz) = \widetilde{\bt}(\bz)$ for every $\bz \in U$.
	%the union of $\text{bd}(\Proj_{\bt}(F)+R_{-\bd}(N(F)))$ for all faces $F$ of $\Q$ has measure zero. Therefore, it suffices to prove that the conclusion of Lemma \ref{lem:div_mul_cvx_gen} holds for any $\by\in\mathbb{R}^n$ such that
	
	Suppose that $\by$ is not in \eqref{eq:y_bd_lem2}. Let $(\hbxi(\by),\hbt(\by))$ be any solution of~\eqref{eq:LSE}. We consider the face of $\Q$ defined as
	\begin{align}
	\label{eq:Fby}
	F_{\by}=\{(\bxi,\bt) \in\mathbb{R}^{p+n} : A_{J_{\by}}\bxi+B_{J_{\by}}\bt=\bc_{J_{\by}},~A_{J_{\by}^c}\bxi+B_{J_{\by}^c}\bt\leq\bc_{J_{\by}^c}\},
	\end{align}
	where $J_{\by}^c$ is the complement set of $J_{\by}$. According to Lemma~\ref{lem:minimal_face_relint}, we have $(\hbxi(\by),\hbt(\by))\in \text{relint}(F_{\by})$.
	%Note that $F_{\by}$ is the minimal (in the sense of set inclusion) face that contains $(\hbxi(\by),\hbt(\by))$.

	Next we want to show that $\by\in \Proj_{\bt}(F_{\by})+R_{-\bd}(N(F_{\by}))$. Consider the following linear optimization problem
	$$
	\max_{(\bxi, \btheta)\in \Q} \langle -\bd, \bxi \rangle+\langle \by-\hbt(\by), \bt \rangle.
	$$
	Its KKT conditions suggest that $(\bxi, \btheta)$ is its optimal solution if and only if there exists a Lagrange multiplier $\bu\in \mathbb{R}^m$ such that
	\begin{eqnarray}
	\label{eq:KKTLP}
	\bt(\by)-\by+B^\top\bu=0,&&\bd+A^\top\bu=0,\\
	A\bxi+B\bt\leq\bc,&&\bu\geq0\nonumber \\
	(\langle\ba_i,\bxi\rangle+\langle\bb_i,\bt\rangle-c_i)u_i=0,&&\forall \; i=1,2,\dots m.\nonumber
	\end{eqnarray}
	However, according to the KKT conditions \eqref{eq:lang_system} of \eqref{eq:LSE} with $\lambda=0$ and the definition of $J_{\by}$ and $F_{\by}$, if we choose $\bu=\hat\bu$, all the conditions in \eqref{eq:KKTLP} hold for any $(\bxi,\bt)\in F_{\by}$, which imply $F_{\by}\subseteq\argmax_{(\bxi, \btheta)\in \Q} \; \langle -\bd, \bxi \rangle+\langle \by-\hbt(\by), \bt \rangle.$
	From the definition of a normal cone, we have $(-\bd,\by-\hbt(\by))\in N(F_{\by})$, and thus, $\by-\hbt(\by)\in R_{-\bd}(N(F_{\by}))$. Hence, we have $\by=(\by-\hbt(\by))+\hbt(\by)\in \Proj_{\bt}(F_{\by})+R_{-\bd}(N(F_{\by}))$.
	
	Because $\by$ is not in \eqref{eq:y_bd_lem2}, $\Proj_{\bt}(F_{\by})+R_{-\bd}(N(F_{\by}))$ must have a full dimension and contain $\by$ in its interior. Therefore, there exists a neighborhood $U$ of $\by$ contained in $\text{int}(\Proj_{\bt}(F_{\by})+R_{-\bd}(N(F_{\by})))$ such that, for any $\bz \in U$, there exist $(\bar{\bxi}(\bz) ,\bar{\bt}(\bz) ) \in F_{\by}$ with $\bz-\bar{\bt}(\bz)\in R_{-\bd}(N(F_{\by}))$. This follows from the fact that, if $\bz \in U\subset\text{int}(\Proj_{\bt}(F_{\by})+R_{-\bd}(N(F_{\by})))$, $\bz$ can be expressed as $\bz = \bar{\bt}(\bz) + (\bz - \bar{\bt}(\bz))$ where $ \bar{\bt}(\bz) \in \Proj_{\bt}(F_{\by})$ and $\bz - \bar{\bt}(\bz) \in R_{-\bd}(N(F_{\by}))$. Now from the definition of $\Proj_{\bt}(F_{\by})$, there exists $\bar{\bxi}(\bz)$ such that $(\bar{\bxi}(\bz) ,\bar{\bt}(\bz) ) \in F_{\by}$.   If there exist multiple qualified $\bar{\bxi}(\bz)$, we choose the one that minimizes $\|\bar{\bxi}(\bz)-\hbxi(\by)\|_2^2$.
	
	Since $\bz - \bar{\bt}(\bz) \in R_{-\bd}(N(F_{\by}))$, by the definition of $R_{-\bd}(N(F_{\by}))$, we have
	$(-\bd,\bz - \bar{\bt}(\bz)) \in N(F_{\by})$, which further implies
	\[
	F_{\by}\subseteq\argmax_{(\bxi, \btheta)\in \Q} \langle -\bd, \bxi \rangle+\langle \bz - \bar{\bt}(\bz), \bt \rangle,
	\]
	by the definition of $N(F_{\by})$. Since $(\bar{\bxi}(\bz) ,\bar{\bt}(\bz) ) \in F_{\by}$, we have
	\[
	(\bar{\bxi}(\bz) ,\bar{\bt}(\bz) )\in\argmax_{(\bxi, \btheta)\in \Q} \langle -\bd, \bxi \rangle+\langle \bz - \bar{\bt}(\bz), \bt \rangle
	\]
	which is equivalent to $\left\langle-\bd, \bxi\right\rangle+\left\langle\bz-\bar{\bt}(\bz), \bt\right\rangle\leq\left\langle-\bd, \bar{\bxi}(\bz)\right\rangle+\left\langle\bz-\bar{\bt}(\bz), \bar{\bt}(\bz)\right\rangle,$
	for any $(\bxi,\bt)\in \Q$. This  implies $\left\langle\bd, \bxi-\bar{\bxi}(\bz)\right\rangle+\left\langle\bar{\bt}(\bz)-\bz, \bt-\bar{\bt}(\bz)\right\rangle\geq0,
	$ for any $(\bxi,\bt)\in \Q$, which, by the optimality conditions (see e.g.,~\citet[Proposition 4.7.1]{bertsekas03}), shows that $(\bar{\bt}(\bz), \bar{\bxi}(\bz))$ is an optimal solution of \eqref{eq:LSE} with $\lambda=0$.
	%$$\min_{(\bxi,\bt)\in \Q}\frac{1}{2}\|\bt-\bz\|_2^2+\bd^\top\bxi.$$
	
	Due to the uniqueness of the $\bt$-component of the optimal solution of \eqref{eq:LSE}, we have $\hbt(\bz)=\bar{\bt}(\bz)\in\Proj_{\bt}(F_{\by})$ and we can set $\hbxi(\bz)=\bar{\bxi}(\bz)$ as well. Recall the facts that $(\hbxi(\by),\hbt(\by))\in \text{relint}(F_{\by})$, $(\hbxi(\bz),\hbt(\bz))=(\bar{\bxi}(\bz),\bar{\bt}(\bz))\in F_{\by}$, and $\bar{\bxi}(\bz)$ minimizes $\|\bar{\bxi}(\bz)-\hbxi(\by)\|_2^2$ among all qualified $\bar{\bxi}(\bz)$'s. By the continuity of $\bar{\bxi}(\cdot)$ and $\bar{\bt}(\cdot)$,  we can guarantee that $(\hbxi(\bz),\hbt(\bz))\in\text{relint}(F_{\by})$ for any $\bz \in U$, if $U$ is small enough.

	Next, we show that, for all $\bz \in U$,
	\begin{eqnarray}\label{eq:key_lem2_gen}
	\argmin_{(\bxi,\bt)\in \Q } \frac{1}{2}\|\bt-\bz\|_2^2+\bd^\top\bxi & \supseteq & \argmin_{(\bxi,\bt)\in F_{\by} } \frac{1}{2}\|\bt-\bz\|_2^2+\bd^\top\bxi \nonumber\\
	& =& \argmin_{(\bxi,\bt)\in \text{aff}(F_{\by}) } \frac{1}{2}\|\bt-\bz\|_2^2+\bd^\top\bxi.
	\end{eqnarray}
	The first equality of the above display follows from the fact that $(\hbxi(\bz),\hbt(\bz))=(\bar{\bxi}(\bz),\bar{\bt}(\bz))\in F_{\by}$ for any $\bz\in U$. %More precisely, by the fact that $(\hbxi(\bz),\hbt(\bz))\in\argmin_{(\bxi,\bt) \in \Q} \frac{1}{2}\|\bt-\bz\|_2^2+\bd^\top\bxi$, $$ \frac{1}{2}\|\hbt(\bz)-\bz\|_2^2+\bd^\top\hbxi(\bz) \leq \min_{(\bxi,\bt) \in F_{\by}} \frac{1}{2}\|\bt-\bz\|_2^2+\bd^\top\bxi. $$ Further, since  $(\hbxi(\bz)^\top,\hbt(\bz)^\top)^\top\in F_{\by}$, $$ \frac{1}{2}\|\hbt(\bz)-\bz\|_2^2 +\bd^\top\hbxi(\bz)= \min_{(\bxi,\bt) \in F_{\by}} \frac{1}{2}\|\bt-\bz\|_2^2+\bd^\top\bxi, $$ which establishes the first inequality.
	We prove the second equality by contradiction. Suppose that the equality does not hold for some $\bz \in U$. Then, there must exist $(\bxi',\bt')\in  \text{aff}(F_{\by})\backslash F_{\by}$ such that $\frac{1}{2}\|\bt'-\bz\|_2^2+\bd^\top\bxi' <\frac{1}{2}\|\hbt(\bz)-\bz\|_2^2+\bd^\top\hbxi(\bz)$. Because $(\hbxi(\bz),\hbt(\bz))\in \text{relint}(F_{\by})$, there exists a small enough $\alpha>0$ such that $\alpha(\bt', \bxi')+(1-\alpha)(\hbt(\bz), \hbxi(\bz))\in F_{\by}$ and, by convexity,
	\begin{eqnarray*}
		&&\frac{1}{2}\|\alpha\bt'+(1-\alpha)\hbt(\bz)-\bz\|_2^2+\bd^\top(\alpha\bxi'+(1-\alpha)\hbxi(\bz))\\
		&\leq&\alpha\left[\frac{1}{2}\|\bt'-\bz\|_2^2+\bd^\top\bxi'\right]+(1-\alpha)\left[\frac{1}{2}\|\hbt(\bz)-\bz\|_2^2+\bd^\top\hbxi(\bz)\right]\\
		&<&\frac{1}{2}\|\hbt(\bz)-\bz\|_2^2+\bd^\top\hbxi(\bz),
	\end{eqnarray*}
	which leads to a contradiction to the optimality of $(\hbxi(\bz),\hbt(\bz))$ in the first equality in \eqref{eq:key_lem2_gen}. Therefore, we must have $(\hbxi(\bz),\hbt(\bz))\in\argmin_{(\bxi,\bt)\in \text{aff}(F_{\by}) } \frac{1}{2}\|\bt-\bz\|_2^2+\bd^\top\bxi$. Since $\text{aff}(F_{\by})=\{(\bxi,\bt)\in\mathbb{R}^{p+n}:A_{J_{\by}}\bxi+B_{J_{\by}}\bt=\bc_{J_{\by}}\}$ due to Lemma~\ref{lem:aff_hull}, Lemma \ref{lem:div} follows when $\lambda=0$. \newline
	
	\noindent \textbf{Case 2: $\lambda>0$.} Note that it suffices to prove Lemma~\ref{lem:div} in the special case where $\lambda=1$ and $\bd=\mathbf{0}$. The case where $\lambda\neq1$ or $\bd\neq\mathbf{0}$ can be reduced to the case with $\lambda=1$ by letting $\bg= \sqrt{\lambda} \bxi+\bd/\sqrt{\lambda}$ and reformulating the problem~\eqref{eq:LSE} as
	\begin{eqnarray}\label{eq:reform_Lip_multi_conv}
	(\hbt(\by), \hat{\bg}(\by)) & = &  \argmin_{\bt, \bg}\frac{1}{2} \|\bt-\by\|_2^2+\frac{1}{2} \|\bg\|_2^2 \\
	& \text{s.t.} &   \frac{1}{\sqrt{\lambda}}A\bg + B\bt \leq \bc+\frac{1}{\lambda}A\bd. \nonumber
	\end{eqnarray}

	Given any face $F$ of $\Q$, $R(F+N(F))$ is itself a polyhedron in $\mathbb{R}^n$ so that its boundary $\text{bd}(R(F+N(F)))$ is a measure zero set in $\mathbb{R}^n$. Since $\Q$ has finitely many faces, the set
	\begin{equation}\label{eq:y_bd_lem3}
	\underset{F\text{ is a face of }\Q}{\bigcup}\text{bd}\bigg(R(F+N(F))\bigg)
	\end{equation}
	is a measure zero set in $\mathbb{R}^n$. Therefore, to prove Lemma \ref{lem:div} when $\lambda=1$ and $\bd=\mathbf{0}$, it suffices to prove that, for any $\by\in\mathbb{R}^n$ not in the set \eqref{eq:y_bd_lem3}, there is an associated neighborhood $U$ of $\by$ such that for every $\bz \in U$, $ \hbt(\bz) = \widetilde{\bt}(\bz)$.
	
	For $\by$ not in the set \eqref{eq:y_bd_lem3}, let $\hbt(\by)$ and $\hbxi(\by)$ be defined as in \eqref{eq:LSE} and $J_{\by}$ be defined as in \eqref{eq:J_y_2}. We consider a face $F_{\by}$ of $\Q$ defined as in \eqref{eq:Fby}.
	When $\lambda=1$ and $\bd=\mathbf{0}$, \eqref{eq:LSE} represents a projection of $(\textbf{0},\by)$ onto $\Q$. By a similar argument to Case 1 based on the KKT conditions \eqref{eq:lang_system} of \eqref{eq:LSE}, we can show $(\hbxi(\by),\hbt(\by))\in F_{\by}$ and $(-\hbxi(\by),\by-\hbt(\by))\in N(F_{\by})$, which further implies $(\textbf{0},\by)\in F_{\by}+N(F_{\by})$ and $\by\in R(F_{\by}+N(F_{\by}))$.
	
	%$\lambda=1$ and $\bd=\mathbf{0}$

	Because $\by$ is not in \eqref{eq:y_bd_lem3}, $R(F_{\by}+N(F_{\by}))$ must have a full dimension and contain $\by$ in its interior. Therefore, there exists a neighborhood $U$ of $\by$ such that, for every $\bz \in U$, we have $(\textbf{0},\bz)\in F_{\by}+N(F_{\by})$, $(\hbxi(\bz),\hbt(\bz))\in F_{\by}$ and  $(-\hbxi(\bz),\bz-\hbt(\bz))\in N(F_{\by})$.
	
	We claim that $U$ above can be further chosen such that, for every $\bz \in U$, $(\hbxi(\bz),\hbt(\bz))\in \text{relint}(F_{\by})$. If not, there exists a sequence of $\{\bz_k\}_{k \ge 1} \subseteq R(F_{\by}+N(F_{\by}))$ converging to $\by$ but $(\hbxi(\bz_k),\hbt(\bz_k))\in \text{relbd}(F_{\by})$ for all $k$. Because $(\hbxi(\cdot),\hbt(\cdot))$ is a continuous mapping and $\text{relbd}(F_{\by})$ is a closed set, we have $(\hbxi(\by),\hbt(\by))\in \text{relbd}(F_{\by})$, contradicting with the fact that $(\hbxi(\by),\hbt(\by))\in \text{relint}(F_{\by})$. Thus, $(\hbxi(\bz),\hbt(\bz))\in \text{relint}(F_{\by})$ for all $\bz\in U$.
	
	Next we show that for all $\bz\in U$,
	\begin{eqnarray*}
		\argmin_{(\bt, \bxi)\in \Q}\frac{1}{2}\|\bt-\bz\|_2^2 + \frac{1}{2}\|\bxi\|_2^2
		&= &\argmin_{(\bt, \bxi)\in F_{\by} } \frac{1}{2}\|\bt-\bz\|_2^2 + \frac{1}{2}\|\bxi\|_2^2 \\
		&= &\argmin_{(\bt, \bxi)\in \text{aff}(F_{\by}) } \frac{1}{2}\|\bt-\bz\|_2^2 + \frac{1}{2}\|\bxi\|_2^2.
	\end{eqnarray*}
	The first equality holds because $(\hbxi(\bz),\hbt(\bz))\in F_{\by}\subseteq \Q$. Suppose that the second equality does not hold. Then there must exist $(\bt', \bxi')\in  \text{aff}(F_{\by})\backslash F_{\by}$ such that  $\|\bt'-\bz\|_2^2+\|\bxi'\|_2^2 <\|\hbt(\bz)-\bz\|_2^2+\|\hbxi(\bz)\|_2^2$. However, since $(\hbt(\bz), \hbxi(\bz))$ is an interior point of $F_{\by}$, there exists a small enough $\alpha>0$ such that $\alpha(\bt', \bxi')+(1-\alpha)(\hbt(\bz), \hbxi(\bz))\in F_{\by}$  and \[\|\alpha\bt'+(1-\alpha)\hbt(\bz)-\bz\|_2^2+\|\alpha\bxi'+(1-\alpha)\hbxi(\bz)\|_2^2 <\|\hbt(\bz)-\bz\|_2^2+\|\hbxi(\bz)\|_2^2,
	\]
	which leads to a contradiction.
	%Hence, we have showed \[(\hbxi(\bz)^\top,\hbt(\bz)^\top)^\top=\argmin_{(\bt, \bxi)\in \text{aff}(F_{\by}) } \|\bt-\bz\|_2^2 + \|\bxi\|_2^2.\]
	According to Lemma~\ref{lem:aff_hull}, $\text{aff}(F_{\by})=\{(\bxi,\bt)\in\mathbb{R}^{p+n}:A_{J_\by} \bxi + B_{J_\by} \bt = \bc_{J_\by} \}$,  which means that $(\hbt(\bz), \hbxi(\bz))$ is an optimal solution of \eqref{eq:aff_LSE} when $\lambda=1$ and $\bd=\mathbf{0}$. As a result, $\hbt(\bz)=\widetilde{\bt}(\bz)$ for each $\bz\in U$, by the uniqueness of the optimal solution of \eqref{eq:aff_LSE}. Then Lemma \ref{lem:div} has been proved $\lambda >0$.
\end{proof}

\subsection{Proof of  Theorem~\ref{thm:div}}
\label{sec:supp_theorem_div}

\begin{reptheorem}{thm:div}
	Suppose $-\bd=A^\top\bu$ for some $\bu\geq\mathbf{0}$ whenever $\lambda=0$ in \eqref{eq:LSE}. For any
	$\by \in \R^n$, let $(\hbt(\by), \hbxi(\by)) $ be any solution for \eqref{eq:LSE} and let
	\begin{eqnarray}\label{eq:J_y_2}
	J_{\by}:=\{1 \leq i \leq m: \langle \ba_i, \hat{\bxi}(\by) \rangle +  \langle \bb_i, \hat{\bt}(\by) \rangle =c_i \},
	\end{eqnarray}
	and $A_{J_\by}$ and $B_{J_\by}$ be  the submatrices of $A$ and $B$ with rows in the set $J_\by$. Let $I_{\by} \subseteq J_\by$ be the index set of maximal independent rows of the matrix $[A_{J_\by}, B_{J_\by}]$, i.e., the set of vectors $\{[\ba_i^\top, \bb_i^\top], i \in I_\by\}$ are independent.
	Then, the following statements hold:
	\begin{enumerate}
		\item[(i)] The optimal solution $(\hbt(\by), \hbxi(\by))$ of~\eqref{eq:LSE} has unique components $\hbt(\by)$. The components of $\hat \bt(\by)$ are almost differentiable in $\by$ and  $\nabla \hat \theta_i(\by)$  is an essentially bounded function for each $i=1,\ldots, n$.
		\item[(ii)] For a.e. $\by$,
		{\small \begin{equation}\label{eq:LSE_div}
			D(\by)=\begin{cases} n - \mathrm{trace}\left(B_{I_\by}^\top \left(B_{I_\by}B_{I_\by}^\top+ \frac{1}{\lambda} A_{I_\by} A_{I_\by}^\top\right)^{-1}B_{I_\by} \right), &\text{ if }\lambda>0,\\
			n- |I_\by| + \mathrm{rank}(A_{I_\by}),&\text{ if }\lambda=0,
			\end{cases}
			\end{equation}}
		and $\df(\hat\bt(\by))=\E[D(\by)]$ (note that the index set $I_\by$ is random).
	\end{enumerate}
\end{reptheorem}

For the ease of presentation, we provide the proofs for part (i) and part (ii) of Theorem \ref{thm:div} separately.

\begin{proof}[Proof of Part (i) of  Theorem~\ref{thm:div}]
	Since  $-\bd=A^\top\bu$ for some $\bu\geq\mathbf{0}$ whenever $\lambda=0$ in \eqref{eq:supp_LSE}, the optimization problem in~\eqref{eq:supp_LSE} has a bounded optimal value for any $\by$ according to Lemma~\ref{lem:bounded} so that $(\hbt(\by), \hbxi(\by)) $ is well defined.
	
	The uniqueness of $\hbt(\by)$ can be easily shown via a strong convexity argument. For the simplicity of notations, we define
	$$
	g(\bxi)=\bd^\top\bxi+ \frac{\lambda}{2}\|\bxi\|_2^2.
	$$
	Assume that there are two distinct optimal solutions to~\eqref{eq:supp_LSE},  $(\bt_1(\by), \bxi_1(\by))$ and $(\bt_2(\by), \bxi_2(\by))$. Then, the solution $((\bt_1(\by)+\bt_2(\by))/2$, $(\bxi_1(\by)+\bxi_2(\by))/2)$ is a feasible solution with strictly smaller objective value, i.e.,
	\begin{eqnarray*}
		&&\frac{1}{2}\left\|\frac{\bt_1(\by)+\bt_2(\by)}{2} -\by \right\|_2^2+g\left(\frac{\bxi_1(\by)+\bxi_2(\by)}{2}\right) \\
		&<& \frac{1}{4} \left\|\bt_1(\by)-\by \right\|_2^2+ \frac{1}{2}g(\bxi_1(\by))+\frac{1}{4} \left\|\bt_2(\by)-\by \right\|_2^2+\frac{1}{2}g(\bxi_2(\by)),
	\end{eqnarray*}
	which contradicts the optimality of $(\bt_1(\by), \bxi_1(\by))$ and $(\bt_2(\by), \bxi_2(\by))$.
	
	The almost differentiability of $\hat \bt(\by)$ and the essential boundedness of $\nabla \hat \theta_i$ can be proved by a scheme similar to the proof of Proposition 1 in \citet{MW00}. In particular, it suffices to prove that $\hbt(\by)$ is Lipschitz continuous, namely, $\|\hbt(\by_1)-\hbt(\by_2)\|_2\leq\|\by_1-\by_2\|_2$, which further implies the almost differentiability of $\hat \bt(\by)$ by Rademacher's theorem (\citet{Herbertbook}). According to the optimality condition of~\eqref{eq:supp_LSE}, we have
	\begin{eqnarray*}
		\left\langle\by_1-\hbt(\by_1),\hbt(\by_2)-\hbt(\by_1)\right\rangle-\left\langle\nabla g(\hbxi(\by_1)),\hbxi(\by_2)-\hbxi(\by_1)\right\rangle\leq0,\\
		\left\langle\by_2-\hbt(\by_2),\hbt(\by_1)-\hbt(\by_2)\right\rangle-\left\langle\nabla g(\hbxi(\by_2)),\hbxi(\by_1)-\hbxi(\by_2)\right\rangle\leq0.
	\end{eqnarray*}
	Adding these two inequalities leads to
	\begin{eqnarray*}
		&&\left\langle\by_1-\by_2-(\hbt(\by_1)-\hbt(\by_2)),\hbt(\by_2)-\hbt(\by_1)\right\rangle\\
		&&+\left\langle\nabla g(\hbxi(\by_2))-\nabla g(\hbxi(\by_1)),\hbxi(\by_2)-\hbxi(\by_1)\right\rangle\leq0.
	\end{eqnarray*}
	Since $g(\cdot)$ is convex so that $\nabla g(\cdot)$ is monotone, we have
	$$
	\left\langle\nabla g(\hbxi(\by_2))-\nabla g(\hbxi(\by_1)),\hbxi(\by_2)-\hbxi(\by_1)\right\rangle\geq0
	$$
	which implies
	\begin{eqnarray*}
		\|\hbt(\by_1)-\hbt(\by_2)\|_2^2&\leq&\left\langle\by_2-\by_1,\hbt(\by_2)-\hbt(\by_1)\right\rangle\\
		&\leq&\|\by_2-\by_1\|_2\|\hbt(\by_2)-\hbt(\by_1)\|_2,
	\end{eqnarray*}
	and thus $\|\hbt(\by_1)-\hbt(\by_2)\|_2\leq\|\by_1-\by_2\|$.
\end{proof}

\begin{proof}[Proof of Part (ii) of Theorem~\ref{thm:div}]
	%The proof of part (i) is similar to the proof of the analogous statement in Proposition \ref{prop:div_poly} and thus omitted. Here, we show that part (ii)  holds.
	Lemma \ref{lem:div} implies that for a.e.~$\by \in \R^n$, $D(\by)= \nabla_\by \hbt(\by) =  \nabla_\bz  \widetilde{\bt}(\bz)\Big|_{\bz=\by},$
	where $\widetilde{\bt}(\bz)$ is defined in \eqref{eq:aff_LSE}. By the definition of $I_{\by}$, we have $$\{(\bxi,\bt)\in\mathbb{R}^{p+n}:A_{J_{\by}}\bxi+B_{J_{\by}}\bt=\bc_{J_{\by}}\}
	=\{(\bxi,\bt)\in\mathbb{R}^{p+n}:A_{I_{\by}}\bxi+B_{I_{\by}}\bt=\bc_{I_{\by}}\}$$
	so that $(\widetilde{\bt}(\bz), \widetilde{\bxi}(\bz))$ in \eqref{eq:aff_LSE} can be equivalently defined as
	\begin{eqnarray}\label{eq:aff_LSE_I}
	(\widetilde{\bt}(\bz),\widetilde{\bxi}(\bz))  & = & \argmin_{\bt, \bxi}\frac{1}{2} \|\bt-\bz\|_2^2 +\bd^\top\bxi+ \frac{\lambda}{2}\|\bxi\|_2^2\\
	& &\;\;\mathrm{s.t.} \; A_{I_\by} \bxi + B_{I_\by} \bt = \bc_{I_\by}. \nonumber
	\end{eqnarray}
	
	%We choose $\bz=\by$ in \eqref{eq:aff_LSE_I}.
	According to the optimality conditions of~\eqref{eq:aff_LSE_I}, there exists a Lagrange multiplier $\widetilde\bu(\bz)\in \mathbb{R}^{|I_{\by}|}$ such that,
	\begin{eqnarray}
	\label{eq:lang_system_eq1}
	\widetilde{\bt}(\bz)-\bz+B_{I_{\by}}^\top\widetilde\bu(\bz)&=&\mathbf{0},\\
	\label{eq:lang_system_eq2}
	\lambda\widetilde{\bxi}(\bz)+\bd+A_{I_{\by}}^\top\widetilde\bu(\bz)&=&\mathbf{0},\\
	\label{eq:lang_system_eq3}
	A_{I_{\by}}\widetilde{\bxi}(\bz)+B_{I_{\by}}\widetilde{\bt}(\bz)&=&\bc_{I_{\by}}.
	\end{eqnarray}
	We then prove the result in two cases: $\lambda=0$ and $\lambda>0$.
	
	\textbf{Case 1: $\lambda=0$.} We define $K$ as a matrix whose columns form a set of basis for the linear space $\mathrm{ker}(A_{I_{\by}}^\top)$ in $\mathbb{R}^{|I_{\by}|}$. Hence, $K$ is a matrix of order $|I_{\by}|\times (|I_{\by}|-\mathrm{rank}(A_{I_{\by}}^\top))$. Because, when $\bz\in U$ (the neighborhood of $\by$), \eqref{eq:aff_LSE_I} has the same objective value as \eqref{eq:LSE} which has a bounded value (according to Lemma~\ref{lem:bounded}), we have $-\bd=A_{I_{\by}}^\top\bar\bu$ for some $\bar\bu$. Note that \eqref{eq:lang_system_eq2} shows that $-\bd=A_{I_{\by}}^\top\widetilde\bu(\bz)$, which implies that $\widetilde\bu(\bz)-\bar\bu\in\mathrm{ker}(A_{I_{\by}}^\top)$. Therefore, there exists $\bv(\bz)\in\mathbb{R}^{|I_{\by}|-\mathrm{rank}(A_{I_{\by}}^\top)}$ such that $\widetilde\bu(\bz)=\bar\bu+K\bv(\bz)$.
	Then, using~\eqref{eq:lang_system_eq1}, we have
	\begin{eqnarray}
	\label{eq:lang_system_eq4}
	\widetilde{\bt}(\bz)=\bz-B_{I_{\by}}^\top(\bar\bu+K\bv(\bz)).
	\end{eqnarray}
	
	From the definition of $K$, multiplying $K^\top$ to both sides of \eqref{eq:lang_system_eq3}, and using the previous display, we have
	\begin{eqnarray}
	\label{eq:lang_system_eq5}
	K^\top\bc_{I_{\by}}&=&K^\top A_{I_{\by}}\widetilde{\bxi}(\bz)+K^\top B_{I_{\by}}\widetilde{\bt}(\bz)\\\nonumber
	&=&K^\top B_{I_{\by}}\widetilde{\bt}(\bz)\\\nonumber
	&=&K^\top B_{I_{\by}}(\bz-B_{I_{\by}}^\top(\bar\bu+K\bv(\bz)))\\\nonumber
	&=&K^\top B_{I_{\by}}\bz-K^\top B_{I_{\by}}B_{I_{\by}}^\top \bar\bu-K^\top B_{I_{\by}}B_{I_{\by}}^\top K\bv(\bz).
	\end{eqnarray}
	
	We claim that $K^\top B_{I_{\by}}B_{I_{\by}}^\top K$ is invertible. Suppose otherwise. Then there exists a non-zero vector $\bar\bv\in\mathbb{R}^{|I_{\by}|-\mathrm{rank}(A_{I_{\by}}^\top)}$ such that $\bar\bv^\top K^\top B_{I_{\by}}B_{I_{\by}}^\top K\bar\bv=0$, which implies $B_{I_{\by}}^\top K\bar\bv=\mathbf{0}$. By the definition of $K$, $A_{I_{\by}}^\top K\bar\bv=\mathbf{0}$ also. Note that $K\bar\bv$ must be non-zero as the columns of $K$ are linearly independent. However, this means that $\bar\bv^\top K^\top[A_{I_\by}, B_{I_\by}]=\mathbf{0}$, contradicting  the fact that $I_{\by}$ is chosen so that the rows of the matrix $[A_{I_\by}, B_{I_\by}]$ are independent. Therefore, $K^\top B_{I_{\by}}B_{I_{\by}}^\top K$ must be invertible so that \eqref{eq:lang_system_eq5} implies
	\begin{align*}
	\bv(\bz)
	=&\left(K^\top B_{I_{\by}}B_{I_{\by}}^\top K\right)^{-1} [K^\top B_{I_{\by}}\by  - K^\top\bc_{I_{\by}}
	- K^\top B_{I_{\by}}B_{I_{\by}}^\top\bar\bu]\label{eq:lang_system_eq6}.
	\end{align*}
	Plugging in $\bv(\bz)$ into \eqref{eq:lang_system_eq4}, we have
	\begin{eqnarray}
	\label{eq:lang_system_eq7}
	\widetilde{\bt}(\bz)=\by-B_{I_{\by}}^\top K\left(K^\top B_{I_{\by}}B_{I_{\by}}^\top K\right)^{-1}K^\top B_{I_{\by}}\bz+\bc',
	\end{eqnarray}
	where $\bc'$ is a constant vector not depending on $\bz$. Therefore,
	\begin{eqnarray*}
		D(\by)= \nabla_\by \hbt(\by) = \nabla_\bz  \widetilde{\bt}(\bz)\Big|_{\bz=\by}
		&=&\text{trace}\left(I_n-B_{I_{\by}}^\top K\left(K^\top B_{I_{\by}}B_{I_{\by}}^\top K\right)^{-1}K^\top B_{I_{\by}}\right)\\
		&=&n-(|I_{\by}|-\mathrm{rank}(A_{I_{\by}}^\top)),
		%  &=&n-\mathrm{rank}([A_{I_\by}, B_{I_\by}]) + \mathrm{rank}(A_{I_\by}).
	\end{eqnarray*}
	which completes the proof in this case.
	
	\textbf{Case 2: $\lambda>0$.} In this case, as opposed to the proof of Case 1, %we will not first derive a closed form like \eqref{eq:lang_system_eq7} for $\widetilde{\bt}(\by)$. Instead,
	we will directly characterize $\nabla_\by \hbt_\lambda(\by)$ by applying the implicit function theorem to the equality system of KKT conditions \eqref{eq:lang_system_eq1}, \eqref{eq:lang_system_eq2} and \eqref{eq:lang_system_eq3}. For the purpose of completeness, we show the implicit function theorem here.

	\begin{lemma}[Implicit function theorem]\label{thm:implicit}
		Let $F:U\rightarrow\mathbb{R}^{n_2}$ be defined in a neighborhood $U\subseteq\mathbb{R}^{n_1+n_2}$ of $(\bu_0,\bv_0)\in\mathbb{R}^{n_1+n_2}$. Suppose that $F$ is continuously differentiable, satisfies $F(\bu_0,\bv_0)=0$, and $\nabla_{\bv} F(\bu_0,\bv_0)$ is an $n_2\times n_2$ invertible matrix. Then there exists a neighborhood $U_{\bu_0}\subseteq\mathbb{R}^{n_1}$ of $\bu_0$ and a continuously differentiable function $f(\bu): \R^{n_1} \rightarrow \R^{n_2}$ such that
		$
		F(\bu,\bv)=0\Longleftrightarrow \bv=f(\bu),
		$
		for any $\bu \in U_{\bu_0}$ and
		\begin{equation}
		\label{eq:der_implicit}
		\nabla f(\bu)=-\left[\nabla_{\bv}F(\bu,f(\bu))\right]^{-1}[\nabla_{\bu}F(\bu,f(\bu))].
		\end{equation}
	\end{lemma}
	
	To characterize the divergence of $\widetilde{\bt}_\lambda(\by)$  we view $(\widetilde{\bt}_\lambda(\by),\widetilde{\bxi}_\lambda(\by))$ and $\by$ in \eqref{eq:aff_LSE} as $\bu$ and $\bv$ in Lemma~\ref{thm:implicit}, respectively, and let $F(\bt,\bxi,\bv)=F(\bu,\bv)=0$ be the KKT conditions of~\eqref{eq:aff_LSE}. Hence, $\widetilde{\bt}_\lambda(\by)$ can be viewed as the implicit function induced by this KKT system whose derivative can be characterized by \eqref{eq:der_implicit}. Note that, we cannot directly apply the implicit function theorem to  the KKT conditions of \eqref{eq:LSE_Lip_multi_conv} because the corresponding KKT conditions involve inequalities and cannot be represented as a system of equalities of the form $F(\bu,\bv)=0$. This shows the necessity of Lemma \ref{lem:div} which establishes the local equivalence between~\eqref{eq:aff_LSE} and~\eqref{eq:supp_LSE}. It is worthy to note that our proof technique of using implicit function theorem   to derive DF can be a general tool with potential applications to other (shape-restricted) regression problems.

	Now, we formally present the proof using Lemma \ref{thm:implicit}. We use $J_1$ and $J_2$ respectively to represent the Jacobian matrices of the equations in the KKT conditions \eqref{eq:lang_system_eq1}, \eqref{eq:lang_system_eq2} and \eqref{eq:lang_system_eq3} with respect to $(\widetilde{\bt}(\bz),\widetilde{\bxi}(\bz),\widetilde\bu(\bz))$  and  with respect to $\bz$. Then, $J_1$ and $J_2$
	have the following forms:
	\begin{eqnarray}\label{eq:Jacob}
	J_1 = \begin{pmatrix}
	I_n & 0  & B_{I_\by}^\top \\
	0   & \lambda I_{p}  & A_{I_\by}^\top \\
	B_{I_\by}  & A_{I_\by}            & 0
	\end{pmatrix},
	\quad
	J_2 = \begin{pmatrix}
	-I_n  \\
	0  \\
	0
	\end{pmatrix}.
	\end{eqnarray}
	Let $\bw=(\widetilde{\bt}(\bz),\widetilde{\bxi}(\bz),\widetilde\bu(\bz)) \in \R^{n+nd+|I|}$. The implicit function theorem implies that
	$$
	\left[\frac{\partial w_i}{\partial z_j}\right]_{ij}=-J_1^{-1}J_2,
	$$
	which further implies that the Jacobian matrix of $\widetilde{\bt}(\by)$ is $-\left( [J_1^{-1}J_2](1:n,1:n) \right)$  and
	\begin{eqnarray}\label{eq:implict_func}
	D(\by)= \nabla_\by \hbt_\lambda(\by) =\nabla_{\bz}\tbt_\lambda(\bz)\Big|_{\bz=\by}=- \mathrm{tr}\left( [J_1^{-1}J_2](1:n,1:n) \right),
	\end{eqnarray}
	where $[J_1^{-1}J_2](1:n,1:n)$ denotes the top-left $n \times n$ sub-matrix of $J_1^{-1}J_2$.
	
	Due to the special structure of $J_1$ in \eqref{eq:Jacob}, its inversion can be computed analytically. In particular, let $D_{I_\by}=B_{I_\by}B_{I_\by}^\top + \frac{1}{\lambda}A_{I_\by}A_{I_\by}^\top$. We note that $D_{I_\by}$ is an invertible matrix since the matrix $[A_{I_\by}, B_{I_\by}]$ has full row rank. The inversion of  $J_1$ takes the following form:
	{\small \begin{eqnarray*}\label{eq:inv_J}
			J_1^{-1}= \begin{pmatrix}
				\begin{pmatrix}
					I_n & 0\\
					0   &  I_{p}/\lambda
				\end{pmatrix}-\begin{pmatrix}
					B_{I_\by}^\top \\ A_{I_\by}^\top /\lambda
				\end{pmatrix}D_I^{-1}\begin{pmatrix}
					B_{I_\by}, A_{I_\by}/\lambda
				\end{pmatrix}  & \begin{pmatrix}
					B_{I_\by}^\top \\ A_{I_\by}^\top /\lambda
				\end{pmatrix} D_I^{-1} \\
				D_I^{-1} \begin{pmatrix}
					B_{I_\by}^\top \\ A_{I_\by}^\top /\lambda
				\end{pmatrix} & 0
			\end{pmatrix}.
	\end{eqnarray*}}
	By plugging in the above formula for the inverse of $J_1$ in~\eqref{eq:implict_func}, we obtain the
	Jacobian matrix of $\widetilde{\bt}(\by)$, which is
	\begin{eqnarray}\label{eq:Jacobianthetay}
	-\left( [J_1^{-1}J_2](1:n,1:n) \right)=I_n-B_{I_\by}^\top \left(B_{I_\by}B_{I_\by}^\top+ \frac{1}{\lambda} A_{I_\by} A_{I_\by}^\top\right)^{-1}B_{I_\by},
	\end{eqnarray}
	and the divergence in \eqref{eq:LSE_div} when $\lambda>0$, which completes the proof.
	
\end{proof}

\subsection{A sanity check for Theorem~\ref{thm:div}}
\label{sec:SanityCheck}
\begin{lemma}\label{lem:SanityCheck}
	$D(\by) \geq 0$ where $D(\by)$ is defined in~\eqref{eq:LSE_div}.
\end{lemma}
\begin{proof}
	Recall the equation \eqref{eq:LSE_div}:
	{\small \begin{equation*}
		D(\by)=\begin{cases} n - \mathrm{trace}\left(B_{I_\by}^\top \left(B_{I_\by}B_{I_\by}^\top+ \frac{1}{\lambda} A_{I_\by} A_{I_\by}^\top\right)^{-1}B_{I_\by} \right), &\text{ if }\lambda>0,\\
		n- |I_\by| + \mathrm{rank}(A_{I_\by}),&\text{ if }\lambda=0.
		\end{cases}
		\end{equation*}
	}
	
	When $\lambda=0$, since $B_{I_\by}$ only has $n$ columns, we have
	$
	|I_\by|=\mathrm{rank}([A_{I_\by}, B_{I_\by}]) \leq n + \mathrm{rank}(A_{I_\by}),
	$
	which implies that $D(\by) \geq 0$. %When $\lambda=0$, for any given $\by$, although $\hbt(\by)$ is unique, $\hbxi(\by)$ is not unique so that the index sets $J_{\by}$ and $I_\by$ defined in Theorem \ref{thm:div} are not necessarily unique. However, as we explained in the previous paragraph, for a fixed $\by$, the value $|I_\by| -\mathrm{rank}(A_{I_\by})$ is always unique and thus any $I_\by$ can be used to compute the divergence.

	When $\lambda>0$, for any vector $\bx$,
	\begin{eqnarray*}
		&&\bx^\top B_{I_\by}^\top \left(B_{I_\by}B_{I_\by}^\top+ \frac{1}{\lambda} A_{I_\by} A_{I_\by}^\top\right)^{-1}B_{I_\by}\bx\\
		&=&[\mathbf{0}^\top~\bx^\top]\underbrace{\left[
			\begin{array}{c}
				\frac{1}{\sqrt{\lambda}}A_{I_\by}^\top\\
				B_{I_\by}^\top
			\end{array}
			\right] \left(B_{I_\by}B_{I_\by}^\top+ \frac{1}{\lambda} A_{I_\by} A_{I_\by}^\top\right)^{-1}\left[\frac{1}{\sqrt{\lambda}}A_{I_\by}, B_{I_\by}\right]}_{P}\left[
		\begin{array}{c}
			\mathbf{0}\\
			\bx
		\end{array}
		\right]\\
		&\leq&\|\bx\|_2^2,
	\end{eqnarray*}
	where the last inequality holds as $P$ is a projection matrix. This indicates that all the eigenvalues of the matrix $\widetilde{P} := B_{I_\by}^\top \left(B_{I_\by}B_{I_\by}^\top+ \frac{1}{\lambda} A_{I_\by} A_{I_\by}^\top\right)^{-1}B_{I_\by}$ are between 0 and 1, which further implies
	\begin{equation*}
	\mathrm{trace}(\widetilde{P}) \leq \mathrm{rank}(\widetilde{P}) \leq \mathrm{rank}(B_{I_\by})\leq n,
	\end{equation*}
	where the second inequality is due to the well-known fact that for any two matrices $B_1$ and $B_2$, $\mathrm{rank}(B_1 B_2) \leq \min(\mathrm{rank}(B_1), \mathrm{rank}(B_2))$ (note that here $B_1=B_{I_\by}^\top \left(B_{I_\by}B_{I_\by}^\top+ \frac{1}{\lambda} A_{I_\by} A_{I_\by}^\top\right)^{-1}$ and $B_2=B_{I_\by}$). Hence, we have $D(\by)=n-\mathrm{trace}(\widetilde{P}) \geq 0$.
\end{proof}
%We present the following two lemmas which will be used in the proofs of Lemma \ref{lem:div_mul_cvx_gen}, Theorem \ref{thm:div_mul_cvx_gen}, Lemma \ref{lem:div_Lip_mul_cvx} and Theorem \ref{thm:div_Lip_mul_cvx}.
%\setcounter{lemma}{0}
%\renewcommand{\thelemma}{\Alph{section}\arabic{lemma}}

\subsection{Illustration for Remark \ref{rem:nontrivial}}
\label{sec:supp_nontrivial}

\begin{figure}[!t]
	\centering
	\includegraphics[width=0.8\textwidth, height=6cm]{./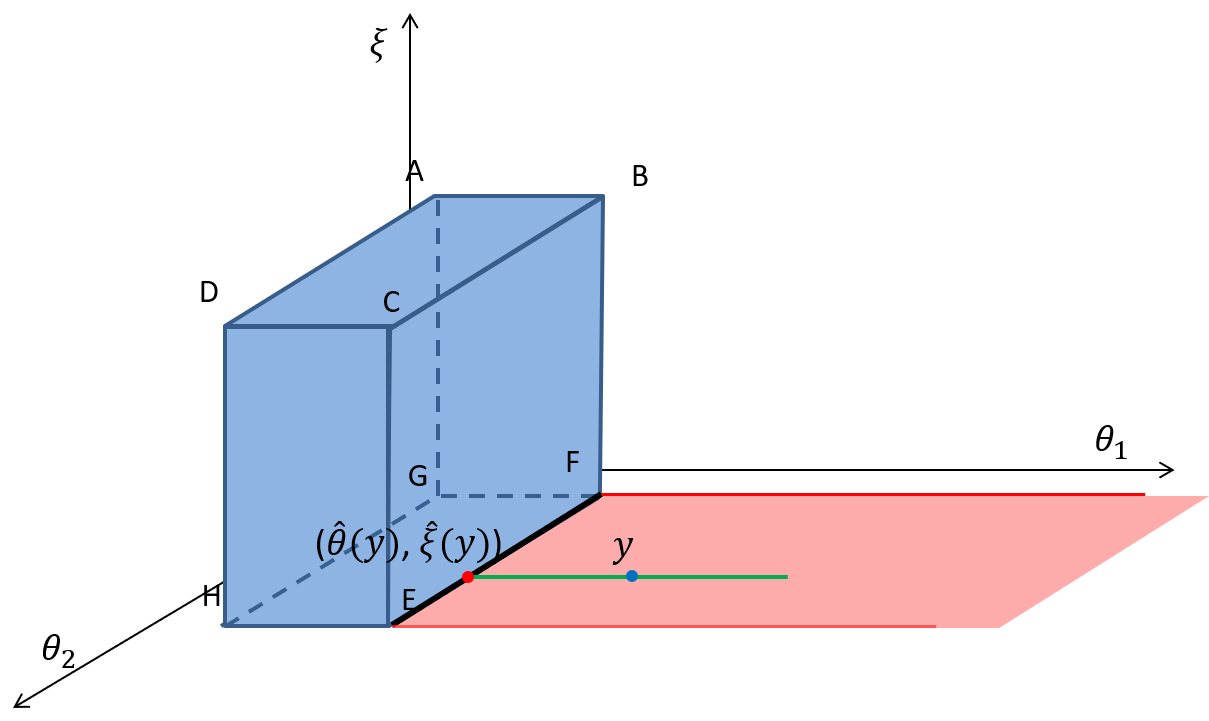}
	\hspace{-3mm}
	\caption{An illustration of the fact that Proposition~\ref{prop:div_poly} does not imply Theorem~\ref{thm:div} with $\lambda>0$. Choose $\lambda=1$ and $\bd=\mathbf{0}$ in~\eqref{eq:LSE} as an example.}
	\label{fig:Lip}
\end{figure}

To better illustrate Remark \ref{rem:nontrivial}, we consider a special case of \eqref{eq:LSE} where $n=2$, $p=1$, $\lambda=1$, $\bd=\mathbf{0}$ and the domain set $\Q=\{(\bt,\bxi) \in \R^{p+n} : A \bxi + B \bt \leq \bc\}$ is the three-dimensional cube $ABCDEFGH$ as illustrated in Figure~\ref{fig:Lip}. In this case, \eqref{eq:LSE} is equivalent to projecting $(\by,-\bd) \in \R^3$ to $\Q$ and the projected point is $(\hbt(\by,\bd), \hbxi(\by,\bd))$. For instance, if $(\by, \mathbf{0})$ is the blue point in Figure~\ref{fig:Lip}, its projection onto $\Q$, $(\hbt(\by, \mathbf{0}), \hbxi(\by, \mathbf{0}))$, is the red point.
According to Lemma 3.2 in \cite{Kato09DF} or the proof of Lemma 2 in \cite{TT12}, $(\hbt(\by,\bd), \hbxi(\by,\bd))$ is a projection onto an affine space in the neighborhood of every $(\by,\bd)$ except a measure-zero set (in $\R^3$).
%In other words,  $(\hbt(\by,\bd), \hbxi(\by,\bd))$ is differentiable with a Jacobian matrix
%$$
%\left[
%\begin{array}{c}
%\hbt(\bz,\bb)\\
%\hat{\bg}(\bz,\bb)
%\end{array}
%\right]
%=(I-P)
%\left[
%\begin{array}{c}
%\bz\\
%\bb
%\end{array}
%\right],~
%\text{ where } P=\left[
%\begin{array}{c}
%B_{I_\by}^\top\\
%\frac{1}{\sqrt{\lambda}}A_{I_\by}^\top
%\end{array}
%\right] \left(B_{I_\by}B_{I_\by}^\top+ \frac{1}{\lambda} A_{I_\by} A_{I_\by}^\top\right)^{-1}\left[B_{I_\by},\frac{1}{\sqrt{\lambda}}A_{I_\by}\right].
%$$
%(with $\bd=\mathbf{0}$) to. Hence, by applying Proposition \ref{prop:div_poly} to \eqref{eq:reform_Lip_multi_conv_lambdanonzero},  we can show that, for a.e.~$(\by,\bd_{\lambda}) \in \mathbb{R}^{p+n}$, there is a neighborhood $U$ of $(\by,\bd_{\lambda})$, such that for every $(\bz,\bb) \in U$,  the solution $(\hbt(\bz,\bb), \hat{\bg}(\bz,\bb))$ defined in \eqref{eq:reform_Lip_multi_conv_lambdanonzero} is the projection of $(\bz,\bb)$ to the affine space $\{(\bt, \bg):  \frac{1}{\sqrt{\lambda}}A_{I_\by}\bg + B_{I_\by}\bt = \bc_{I_\by} \}$ with $I_\by$ defined the same as in Theorem~\ref{thm:div}.
%~\eqref{eq:proj_aff} in Proposition \ref{prop:div_poly} does not hold on
This measure-zero set consists of the boundary of each subset of $\mathbb{R}^{3}$ that project onto the same face of $\Q$.  For example, the pink area in Figure~\ref{fig:Lip} belongs to the boundary of the set whose projection onto $\Q$ is on the face $BCEF$ so that the pink area belongs to this measure-zero set. Therefore, the mapping $(\hbt(\by,\bd), \hbxi(\by,\bd))$ is no longer a projection onto the same affine space near any point like $(\by, \mathbf{0})$ in this pink area,  which has a positive measure in the space of $\by$ (i.e., $\mathbb{R}^2$). %Without \eqref{eq:proj_aff}, the derivation above for \eqref{eq:LSE_div} with $\lambda>0$ does not work.
%Now we view $(\hbt,\hbxi)$ as a mapping of $(\by,\bd)$ instead of just $\by$. Then not only might~\eqref{eq:proj_aff} fail,
In fact, it is easy to verify that $(\hbt(\by,\bd), \hbxi(\by,\bd))$ is not even a differentiable mapping of $(\by,\bd)$ at any point in the pink area (i.e. at the point like $(\by, \mathbf{0})$). As a result, the Jacobian matrix of the estimator $(\hbt, \hbxi)$ is not well-defined and cannot be used to derive the divergence of $\hbt$ with respective to $\by$. On the contrary, when $\bd=\mathbf{0}$ and $(\hbt,\hbxi)$ is viewed as a mapping of only $\by$, it is differentiable at $\by$ in the interior of the pink area in Figure~\ref{fig:Lip}. Hence, the Jacobian matrix of $\hbt$ with respect to $\by$ is well-defined almost everywhere (see \eqref{eq:Jacobianthetay}). Based on this property, we show that \eqref{eq:LSE_div} holds for almost every $\by$ for any given $\bd$.

\section{Proof of Results and Additional Material for Section \ref{sec:DFIso}}
\label{sec:supp_DFIso}

\subsection{Proof of Proposition \ref{prop:uni_bound_iso}}\label{sec:proof_uni_bound_iso}

\begin{repproposition}{prop:uni_bound_iso}
	The bounded isotonic constraint set $\C$ defined in~\eqref{eq:partial_iso_bounded} is a convex polyhedron in the form of \eqref{eq:CvxPoly} where $m=|E|$ and $B\in\mathbb{R}^{|E|\times n}$ is defined as (the rows of $B$ are indexed by the edge set) $\vspace{-0.05in}$
	\begin{eqnarray}\label{eq:A_iso_bounded_partial_order}
	B_{e,i}=\left\{\begin{array}{ll}
	1&\text{ if }e=(i,j)\in E\text{ for some } j\neq i\\
	-1&\text{ if }e=(j,i)\in E\text{ for some } j\neq i\\
	0&\text{ otherwise } \vspace{-0.05in}
	\end{array}
	\right.
	\end{eqnarray}
	and $\bc = (c_e)_{e=1}^{|E|} \in\mathbb{R}^{|E|}$ is defined as
	\begin{eqnarray}\label{eq:b_iso_bounded_partial_order}
	c_e=\left\{\begin{array}{ll}
	\gamma&\text{ if }e=(i,j)\in E\text{ for } i \in \max(V), j \in \min(V)\\
	0&\text{ otherwise}.
	\end{array}
	\right.
	\end{eqnarray}
	%And the bounded isotonic regression estimator $\hbt_{\gamma}(\by)=P_\C(\by)$ (see \eqref{eq:Proj}).
	Let $B_e$ be the $e$-th row of $B$ and $J_\by:=\{e\in E: B_e \hat \bt_\gamma(\by) = c_e\}$. Further,  let $G_{J_\by}$  be the subgraph of $G$ with the edge set $J_\by$. The divergence of $\hat \bt_\gamma (\by)$ is  the number of connected components of $G_{J_\by}$ for a.e. $\by$, i.e.,
	$D(\by)=\omega(G_{J_\by})$, and therefore $\df(\hat \bt_\gamma(\by))= \E[\omega(G_{J_\by})]$.
\end{repproposition}

\begin{proof}[Proof of Proposition \ref{prop:uni_bound_iso}]
	One key observation is that the matrix $B$  used to define the bounded isotonic constraint set $\C$ in \eqref{eq:A_iso_bounded_partial_order} is the \emph{incidence matrix} of the graph $G$.  Recall that the incidence matrix of a directed graph has one column corresponding to each node  and one row for each edge. If an edge runs from  node $i$ to node $j$, the row corresponding to that edge has $+1$ in column $i$ and $-1$ in column $j$. And it is also straightforward to see that $B_{J_\by}$ is the incidence matrix of the subgraph $G_{J_\by}$.
	
	By Theorem \ref{thm:div} (note $A=0$), $D(\by)=n- |I_\by|= n-\mathrm{rank}(B_{J_\by})$. Since $B_{J_\by}$ is the incidence matrix of the graph $G_{J_\by}$, by a fundamental result from algebraic graph theory (see e.g., Proposition 4.3 from \citet{Biggs94AlgeGraph}), we have $\mathrm{rank}(B_{J_\by})=n-\omega(G_{J_\by})$, where $\omega(G_{J_\by})$ is the number of connected components of $G_{J_\by}$. Therefore, we have $D(\by)=n- \mathrm{rank}(B_{J_\by})= \omega(G_{J_\by}), $ which completes the proof of the proposition.
\end{proof}

\subsection{Proof of Proposition \ref{prop:threshold_gen}}\label{sec:proof_DFIso}

\begin{repproposition}{prop:threshold_gen}
	Let $|U_s|=k_s$ for $s=1,\dots,r$ and $H(L,\gamma)$ be a function on $\mathbb{R}^2$ defined as
	\begin{eqnarray}
	\label{eq:HL_gen}
	H(L,\gamma) :=\sum_{s=1}^rk_s\left(L-\bar\theta_{ s}\right)_+ +
	\sum_{s=1}^rk_s\left(L+\gamma-\bar\theta_{ s}\right)_-,
	\end{eqnarray}
	where $(x)_+=\max\{x,0\}$ and $(x)_-=\min\{x,0\}$. For any given $\gamma$ with $\bar\theta_{ r}-\bar\theta_{ 1}\geq\gamma\geq0$,  $H(L,\gamma)$ is a continuous and strictly increasing function of $L$. Moreover,  $\lim_{L\rightarrow-\infty}H(L,\gamma)=-\infty$ and $\lim_{L\rightarrow+\infty}H(L,\gamma)=+\infty$ so that there exists a unique $L_{\gamma}$ satisfying $H(L_{\gamma},\gamma)=0$. Then, we have
	\begin{equation}\label{eq:thresh}
	\hat{\theta}_{\gamma, i}=\max(L_{\gamma},\min(L_{\gamma}+\gamma,\bar\theta_{ s})),\text{ for all }i\in U_s.
	\end{equation}
	Moreover, $L_{\gamma}$ is non-increasing in $\gamma$.
\end{repproposition}

\begin{proof}[Proof of Proposition \ref{prop:threshold_gen}]
	For the given partial ordered set $\mathcal{X}$ with $n$ elements, the graph induced from the isotonic constraints is denoted by $\widetilde{G}=(V, \widetilde{E})$ where $V=\{1, \ldots, n\}$ and the set of directed edges is $\widetilde{E}=\{(i,j):  x_i \lesssim x_j\}$.  Recall that,  the projection estimator for unbounded isotonic regression, denoted by $\hat \bt=(\hat{\theta}_{1},\dots, \hat{\theta}_{n})^\top$, is obtained by projecting $\by$ onto $\{\bt \in \R^n: \theta_i \leq \theta_j, \; \forall (i,j) \in \widetilde{E}\}$, and the projection estimator for bounded isotonic regression, denoted by $\hat{\bt}_\gamma=(\hat{\theta}_{\gamma,1},\dots, \hat{\theta}_{\gamma,n})^\top$, is obtained by projecting $\by$ onto %$\{\bt: \theta_i \leq \theta_j, \; \forall (i,j) \in \widetilde{E}\}$
	\[
	\C =  \{\btheta \in \R^n: \theta_i \leq \theta_j  \, \forall \, (i,j)\in \widetilde{E},  \theta_i\leq \theta_j+\gamma,
	i\in \max(V),j\in \min(V)\},
	\]
	where $\max(V)$ and $\min(V)$ are the sets of maximal and minimal elements with respect to the partial order, respectively. It is well known that $\hat\bt$ has a group-constant structure, i.e., there exist disjoint subsets $U_1,U_2,\dots,U_r$ of $V=\{1,\ldots, n\}$ with $|U_s|=k_s$ such that $V=\bigcup_{s=1}^r U_s$
	and $\hat{\theta}_i=\bar\theta_s$ for each $i\in U_s$. Moreover, we assume, without loss of generality, that $r>1$ and   $\bar\theta_1<\bar\theta_2<\dots<\bar\theta_r$.

	Let $(x)_+=\max\{x,0\}$ and $(x)_-=\min\{x,0\}$. We define
	\begin{eqnarray}
	\label{eq:HL_gen_supp}
	H(L,\gamma) :=\sum_{s=1}^rk_s\left(L-\bar\theta_{ s}\right)_+ +
	\sum_{s=1}^rk_s\left(L+\gamma-\bar\theta_{ s}\right)_-.
	\end{eqnarray}
	We first show that, for any $\gamma$ such that $\bar\theta_{ r}-\bar\theta_{ 1}\geq\gamma\geq 0$, there exists an unique $L_{\gamma}$ such that
	\begin{eqnarray}
	\label{eq:threshold1_gen}
	H(L_{\gamma},\gamma)
	=0.
	\end{eqnarray}
	%\begin{eqnarray}
	%\label{eq:threshold1}
	%\sum_{s: \bar\theta_{ s}\leq L_{\gamma}}k_s\left(L_{\gamma}-\bar\theta_{ s}\right)+
	%\sum_{s: \bar\theta_{ s}\geq L_{\gamma}+\gamma}k_s\left(L_{\gamma}+\gamma-\bar\theta_{ s}\right)
	%=0.
	%\end{eqnarray}
	For any $\gamma \le \bar\theta_{ r}-\bar\theta_{ 1}$, it is easy to see that $H(L,\gamma)$ is a continuous, non-decreasing and piecewise linear function of $L$. If $H(L,\gamma)$ is not strictly increasing, there must exist $L^1<L^2$ such that $H(L^1,\gamma)=H(L^2,\gamma)$. This means that $H(L,\gamma)$ is a constant on the interval $[L^1, L^2]$, which further implies from the definition of the function in~\eqref{eq:HL_gen_supp} that
	$$
	\bar \theta_r-\gamma\leq L^1< L^2 \leq \bar \theta_1.
	$$
	This contradicts with the fact that $\bar\theta_{ r}-\bar\theta_{ 1}\geq\gamma$. Hence, $H(L,\gamma)$ is strictly increasing in function of $L$. Since $\lim_{L\rightarrow-\infty}H(L,\gamma)=-\infty$ and $\lim_{L\rightarrow+\infty}H(L,\gamma)=+\infty$, there exists an unique $L_{\gamma}$ satisfies $H(L_{\gamma},\gamma)=0$.
	%\begin{eqnarray}
	%\label{eq:threshold1}
	%\left(\sum_{s: \bar\theta_{ s}\leq L_{\gamma}}k_s\right)L_{\gamma}+
	%\left(\sum_{s: \bar\theta_{ s}\geq L_{\gamma}+\gamma}k_s\right)(L_{\gamma}+\gamma)
	%-\sum_{s: \bar\theta_{ s}\leq L_{\gamma}}\bar\theta_{ s}
	%-\sum_{s: \bar\theta_{ s}\geq L_{\gamma}+\gamma}\bar\theta_{ s}=0.
	%\end{eqnarray}
	
	Next we show that this $ L_{\gamma}$ is a non-increasing function of $\gamma$. If not, there exist $\gamma_1$ and $\gamma_2$ such that $\bar\theta_{ r}-\bar\theta_{ 1}\geq\gamma_2>\gamma_1\geq 0$ and $L_{\gamma_2}>L_{\gamma_1}$. By the definitions of $L_{\gamma_1}$ and $L_{\gamma_2}$, we have
	\begin{eqnarray*}
		0&=&H(L_{\gamma_2},\gamma_2)\\
		&=&\sum_{s=1}^rk_s\left(L_{\gamma_2}-\bar\theta_{ s}\right)_+ +
		\sum_{s=1}^rk_s\left(L_{\gamma_2}+\gamma_2-\bar\theta_{ s}\right)_-\\
		&>&\sum_{s=1}^rk_s\left(L_{\gamma_1}-\bar\theta_{ s}\right)_+ +
		\sum_{s=1}^rk_s\left(L_{\gamma_1}+\gamma_2-\bar\theta_{ s}\right)_-\\
		&\geq&\sum_{s=1}^rk_s\left(L_{\gamma_1}-\bar\theta_{ s}\right)_+ +
		\sum_{s=1}^rk_s\left(L_{\gamma_1}+\gamma_1-\bar\theta_{ s}\right)_-\\
		&=&0,
	\end{eqnarray*}
	where the first inequality holds because $H(L,\gamma)$ is strictly increasing in $L$ and the second inequality holds because $H(L,\gamma)$ is non-decreasing in $\gamma$. This contradiction indicates that $ L_{\gamma}$ is a non-increasing function of $\gamma$.
	
	For each node $i\in V$, we denote the set of  successors and the set of predecessors  of $i$ in the partial order by
	$$
	\widetilde n^+(i) :=\{j\in V : (i,j)\in \widetilde E)\}\quad\text{ and }\quad
	\widetilde n^-(i):=\{j\in V : (j,i)\in \widetilde E\}.
	$$

	According to the KKT conditions of isotonic regression, for $e=(i,j)\in \widetilde E$, there exists a dual variable $u_{ij}\geq0$ for the constraint $\theta_i\leq\theta_j$ such that
	\begin{eqnarray}\label{eq:iso_first_order_gen}
	\hat\theta_{ i}-y_i+\sum_{j\in \widetilde n^+(i)}u_{ij}-\sum_{j\in \widetilde n^-(i)}u_{ji}=0,\quad\forall \;  i \in V,
	\end{eqnarray}
	and
	\begin{eqnarray}\label{eq:iso_complementary}
	u_{ij}(\hat\theta_{ i}-\hat\theta_{ j})=0,\quad\forall\; (i,j)\in \widetilde E.
	\end{eqnarray}
	Moreover, for any $(i,j)\in \widetilde E$ such that $i\in U_t$ and $j\in U_s$ and $\bar\theta_{ t}<\bar\theta_{ s}$, we have $\hat\theta_{ i}<\hat\theta_{ j}$, and thus, $u_{ij}=0$.
	
	We expand the graph $\widetilde G$ to $G=(V, E)$ where $ E=\widetilde E\cup \{(i,j): i\in \max(V),j\in \min(V)\}$ and define
	$$
	n^+(i) :=\{j\in V : (i,j)\in  E\}\quad\text{ and }\quad
	n^-(i) :=\{j\in V : (j,i)\in  E\}.
	$$
	
	Similarly, according to the KKT conditions of bounded isotonic regression, for $e=(i,j)\in E$, there exists a dual variable $u_{\gamma,ij}\geq0$ for the constraint either $\theta_i\leq\theta_j$ or $\theta_i\leq\theta_j+\gamma$ such that
	\begin{eqnarray}\label{eq:iso_bounded_first_order_gen}
	\qquad \hat\theta_{\gamma, i}-y_i+\sum_{j\in n^+(i)}u_{\gamma,ij}-\sum_{j\in n^-(i)}u_{\gamma,ji} &=&0,\quad\forall \; i\in V, \\\label{eq:iso_bounded_gen}
	u_{\gamma,ij}(\hat\theta_{\gamma, i}-\hat\theta_{\gamma, j})&=&0,\;\forall \; (i,j)\in \widetilde E,\\
	\label{eq:iso_bounded_complementary2}
	\qquad u_{\gamma,ij}(\hat\theta_{\gamma,i}-\hat\theta_{\gamma,j}-\gamma)&=&0,\;\forall \; i\in \max(V), j\in \min(V).
	\end{eqnarray}
	To show that $\hat\btheta_{\gamma}$ defined by
	\begin{equation}\label{eq:thresh_supp}
	\hat{\theta}_{\gamma, i}=\max(L_{\gamma},\min(L_{\gamma}+\gamma,\bar\theta_{ s})),\text{ for }i\in U_s
	\end{equation}
	is the optimal solution for bounded isotonic regression, it suffices to construct a non-negative value for each dual variables $u_{\gamma,ij}$ for $e=(i,j)\in E$, which satisfy the conditions \eqref{eq:iso_bounded_first_order_gen}, \eqref{eq:iso_bounded_gen}, and \eqref{eq:iso_bounded_complementary2} together with $\hat\btheta_{\gamma}$ defined by \eqref{eq:thresh_supp}.
	
	We will do this by solving a \emph{transportation problem}, which is a classical problem in operations research (see, e.g., \citep[Chapter 14]{Dantzig:59}). In a transportation problem,  some demands and supplies of a product are located in different nodes of a (directed) graph and we need to determine a transportation plan that sends products from the supply nodes along the arcs to meet the demands in the demand nodes.
	
	To construct the transportation problem, we consider a directed graph $\hat G=(\hat V, \hat E)$ with
	\begin{eqnarray*}
		\hat V:=\{i\in V: \hat\theta_i\leq L_\gamma\text{ or }\hat\theta_i\geq L_\gamma+\gamma \},
	\end{eqnarray*}
	and
	\begin{eqnarray*}
		\hat E:=\{(i,j)\in E: i\in \hat V\text{ and }j\in\hat V\} \backslash \{(i,j)\in E:  \hat\theta_i\leq L_\gamma \text{ and  } \hat\theta_j\geq L_\gamma+\gamma \}
	\end{eqnarray*}
	where $L_\gamma$ is the unique value satisfying \eqref{eq:threshold1_gen}. Note that $\hat G$ is a subgraph of $G$ containing the arcs in $E$ whose both ends are in $\hat V\subset V$. We also
	define
	$$
	\hat n^+(i):=\{j\in V : (i,j)\in  \hat E\}\quad\text{ and }\quad
	\hat n^-(i):=\{j\in V : (j,i)\in  \hat E\}.
	$$

	We claim there is at least one node $i\in \hat V$ with $\hat\theta_i\leq L_\gamma$. If not, since $\bar\theta_{ r}-\bar\theta_{ 1}\geq\gamma$, we will have $L_\gamma<\bar\theta_1\leq\bar\theta_r-\gamma$ so that $L_\gamma+\gamma<\bar\theta_r$. As a result, we have $H(L_\gamma,\gamma)<0$ contradicting~\eqref{eq:threshold1_gen}. Similarly, we can show there is at least one node $i\in \hat V$ with $\hat\theta_i\geq L_\gamma+\gamma$.
	
	Then, to each node $i\in \hat V$ with $\hat\theta_i\leq L_\gamma$, we assign a \emph{demand} of $L_\gamma-\hat\theta_i\geq0$. To each node $i\in \hat V$ with $\hat\theta_i\geq L_\gamma+\gamma$, we assign a \emph{supply} of $\hat\theta_i-L_\gamma-\gamma\geq0$. The decision variables of the transportation problem is denoted by $\delta_{ij}\geq 0$, for each $(i,j)\in\hat E$, which represents the amount of products shipped from node $i$ to node $j$ along arc $(i,j)$. To find a shipping plan so that the demands are satisfied by the supplies, we want to find $\delta_{ij}$'s to satisfy the following \emph{flow-balance} constraints
	\begin{eqnarray}\label{eq:flowbalance1}
	&&L_\gamma-\hat\theta_i+\sum_{j\in \hat n^+(i)}\delta_{ij}-\sum_{j\in \hat n^-(i)}\delta_{ji}=0,\quad\text{ for } i\in \hat V, ~\hat\theta_i\leq L_\gamma\\\label{eq:flowbalance2}
	&&L_\gamma+\gamma-\hat\theta_i+\sum_{j\in \hat n^+(i)}\delta_{ij}-\sum_{j\in \hat n^-(i)}\delta_{ji}=0,\quad\text{ for } i\in \hat V, ~\hat\theta_i\geq L_\gamma+\gamma.
	\end{eqnarray}
	The constraint \eqref{eq:flowbalance1} means, for a demand node, the total amount of in-flow minus the total amount of out-flow must equal its demand. The constraint \eqref{eq:flowbalance2} means, for a supply node, the total amount of out-flow minus the total amount of in-flow must equal its supply.
	
	Then, we show that there exist $\delta_{ij}\geq0$ such that \eqref{eq:flowbalance1} and \eqref{eq:flowbalance2} hold by the following three observations.
	
	\begin{itemize}
		\item First, we note that the total demand is
		$$
		\sum_{i\in \hat V,~\hat\theta_i\leq L_\gamma }\left(L_\gamma-\hat\theta_{ i}\right)
		=\sum_{s=1}^rk_s\left(L_\gamma-\bar\theta_{ s}\right)_+
		$$
		and the total supply is
		$$
		\sum_{i\in \hat V,~\hat\theta_i\geq L_\gamma+\gamma}\left(\hat\theta_{ i}-L_\gamma-\gamma\right)
		=-\sum_{s=1}^rk_s\left(L_\gamma+\gamma-\bar\theta_{ s}\right)_-.
		$$
		Since  $L_\gamma$ satisfies \eqref{eq:threshold1_gen}, the total demand above equals the total supply.
		
		\item Second, given any $i\in \hat V$ with $\hat\theta_i\geq L_\gamma+\gamma$, let $j$ be a successor of $i$ in the partial order. Then, $j$ must be in $\hat V$ also because $\hat\theta_j\geq\hat\theta_i\geq L_\gamma+\gamma$. As a result, $\max(V)\bigcap\hat V\neq\emptyset$ and there must exist a directed path in $\hat G$ from each node $i\in \hat V$ with $\hat\theta_i\geq L_\gamma+\gamma$ to a maximal node in $\max(V)\bigcap\hat V$. Similarly, we can show that  $\min(V)\bigcap\hat V\neq\emptyset$ and there must exist a directed path in $\hat G$ from a minimal node in $\min(V)\bigcap\hat V$ to each node $i\in \hat V$ with $\hat\theta_i\leq L_\gamma$.
		
		\item Third, by definition, $\hat G$ contains every arc from a node in $\max(V)\bigcap\hat V$ to a node in $\min(V)\bigcap\hat V$.
	\end{itemize}
	
	By these three observations above, there always exist a shipping plan that exactly matches supplies to demands in all nodes in $\hat G$. Therefore, there exist $\delta_{ij}\geq0$ satisfying \eqref{eq:flowbalance1} and \eqref{eq:flowbalance2}.
	
	Next we construct dual variables $u_{\gamma,ij}$ for $e=(i,j)\in E$ that satisfy the conditions \eqref{eq:iso_bounded_first_order_gen}, \eqref{eq:iso_bounded_gen}, and \eqref{eq:iso_bounded_complementary2} together with $\hat\btheta_{\gamma}$ defined by \eqref{eq:thresh_supp} as follows:
	\begin{eqnarray}\label{eq:dual_iso_bounded1}
	&&u_{\gamma,ij}=u_{ij},\quad\text{ for } (i,j)\in \widetilde E\backslash \hat E, \\\label{eq:dual_iso_bounded2}
	&&u_{\gamma,ij}=0,\quad\text{ for }  i\in \max(V),~j\in \min(V),~ (i,j)\notin \hat E,\\\label{eq:dual_iso_bounded3}
	&&u_{\gamma,ij}=u_{ij}+\delta_{ij},\quad\text{ for } (i,j)\in \widetilde E\cap\hat E,\\\label{eq:dual_iso_bounded4}
	&&u_{\gamma,ij}=\delta_{ij},\quad\text{ for }  i\in \max(V),~j\in \min(V),~ (i,j)\in \hat E.
	\end{eqnarray}
	
	We can easily see that all $u_{\gamma,ij}$'s defined as above are non-negative.
	
	First, we show that \eqref{eq:iso_bounded_first_order_gen} holds. For $i\in V\backslash\hat V$, we have $\hat\theta_{\gamma,i}=\hat\theta_{i}$ according to \eqref{eq:thresh_supp}, which further implies \eqref{eq:iso_bounded_first_order_gen} together with \eqref{eq:dual_iso_bounded1}, \eqref{eq:dual_iso_bounded2} and \eqref{eq:iso_first_order_gen}. For $i\in \hat V$ with $\hat\theta_i\leq L_\gamma$, we have $\hat\theta_{\gamma, i}=L_\gamma$ and summing \eqref{eq:iso_first_order_gen} and \eqref{eq:flowbalance1} yields \eqref{eq:iso_bounded_first_order_gen}. For $i\in \hat V$ with $\hat\theta_i\geq L_\gamma+\gamma$, we have $\hat\theta_{\gamma, i}=L_\gamma+\gamma$  and  summing \eqref{eq:iso_first_order_gen} and \eqref{eq:flowbalance2} yields \eqref{eq:iso_bounded_first_order_gen}.
	
	Second, we show that \eqref{eq:iso_bounded_gen} holds. It suffices to prove that $u_{\gamma,ij}=0$ for $(i,j)\in \widetilde E$ such that $\hat\theta_{\gamma, i}<\hat\theta_{\gamma, j}$, which can only happen when $(i,j)\in \widetilde E\backslash \hat E$ (note that when $(i,j) \in \widetilde{E}\cap \hat E$, we must have either $\hat\theta_{\gamma, i}=\hat\theta_{\gamma, j}=L_\gamma$ or  $\hat\theta_{\gamma, i}=\hat\theta_{\gamma, j}=L_\gamma+\gamma$). In this case, we have $\hat\theta_{\gamma, i}=\hat\theta_{i}<\hat\theta_{j}=\hat\theta_{\gamma, j}$. By \eqref{eq:dual_iso_bounded1} and \eqref{eq:iso_complementary},  \eqref{eq:iso_bounded_gen} holds.
	
	Third, we show that \eqref{eq:iso_bounded_complementary2} holds. It suffices to prove that $u_{\gamma,ij}=0$ for $i\in \max(V)$ and $j\in \min(V)$ such that $\hat\theta_{\gamma, i}<\hat\theta_{\gamma, j}+\gamma$, which can only happen when  $i\in \max(V)$, $j\in \min(V)$ and $(i,j)\notin \hat E$. In this case, \eqref{eq:iso_bounded_complementary2} is implied by \eqref{eq:dual_iso_bounded2}.
	
	Then, all the KKT conditions are satisfied by $\hat\btheta_{\gamma}$ given in \eqref{eq:thresh_supp} and the dual variables defined in \eqref{eq:dual_iso_bounded1}, \eqref{eq:dual_iso_bounded2}, \eqref{eq:dual_iso_bounded3} and \eqref{eq:dual_iso_bounded4}. Hence, such a $\hat\btheta_{\gamma}$ is an optimal solution for bounded isotonic regression.
\end{proof}

\subsection{Proof of  Theorem \ref{thm:iso_bounded_monotone}} \label{sec:supp_monotone}

\begin{reptheorem}{thm:iso_bounded_monotone}
	For any given $\by \in \R^n$ the divergence of $\hat \bt_{\gamma} (\by)$ is nondecreasing in $\gamma$. This implies that $\df(\hat\bt_\gamma(\by))$ is nondecreasing in $\gamma$.
\end{reptheorem}

\begin{proof}[Proof of Theorem \ref{thm:iso_bounded_monotone}]
	According to Proposition \ref{prop:threshold_gen}, when $\bar\theta_{ r}-\bar\theta_{ 1}\geq\gamma\geq0$, we have
	$$
	\hat\theta_{\gamma, i}=\max(L_{\gamma},\min(L_{\gamma}+\gamma,\bar\theta_{ s})),\text{ for } i \in U_s
	$$
	where $L_{\gamma}$ is non-increasing in $\gamma$. Therefore, the number of connected component is  non-decreasing in $\gamma$; so is the divergence of $\hat \bt_{\gamma} (\by)$. For $\gamma\geq\bar\theta_{ r}-\bar\theta_{ 1}$, we have $\hat \bt_{\gamma} (\by)=\hat \bt(\by)$, i.e., the solution of the unbounded isotonic regression and the bounded isotonic regression are identical. Therefore, the number of connected component and the divergence of $\hat \bt_{\gamma} (\by)$ is a constant when $\gamma\geq\bar\theta_{ r}-\bar\theta_{ 1}$. Combining the above two cases on $\gamma$ completes the proof of the theorem. %we have  $\df(\hat\bt_\gamma(\by))$ is non-decreasing in $\gamma$.
\end{proof}

\section{Proof of Results in Section \ref{sec:other_app}}
\label{sec:proof_other_app}

\subsection{DF for additive models}
\label{sec:proof_additive}
\begin{repproposition}{prop:fused_add}
	For the estimator $\hbt(\by)=\sum_{j=1}^d \hbt_j(\by) + \hat\theta_0(\by) \mathbf{1}$ in  \eqref{eq:fused_add}, the divergence of $\hbt(\by)$ is,
	\[
	D(\by)=\mathrm{dim}(\text{span}\{\mathbf{1}_{n\times1},\mathrm{ker}(K_1),\dots,\mathrm{ker}(K_d)\}),
	\]
	where, for $j=1,\dots,d$,  $K_j=\begin{pmatrix}Q_0^j\\\mathbf{1}_{1\times n}\end{pmatrix}$,
	$Q_0^j$ is the sub-matrix of $Q_j$ consisting of each row $\mathbf{q}_{ji}$ ($1\leq i\leq n_j$) of $D_jP_j$ such that $\mathbf{q}_{ji}^\top\hbt_j(\by)=0$ and
	$\mathrm{ker}(K_j) :=\{\bx\in\mathbb{R}^n : Q_0^j\bx=\mathbf{0}\text{ and }\mathbf{1}_{1\times n}\bx=0\}$ is the kernel of $K_j=\begin{pmatrix}Q_0^j\\\mathbf{1}_{1\times n}\end{pmatrix}$. The DF $\mathrm{df} (\hbt(\by))=\E(D(\by))$.
\end{repproposition}

\begin{proof}[Proof of Proposition~\ref{prop:fused_add}]
	Letting $\bt=\sum_{j=1}^d \bt_j + \theta_0 \mathbf{1} $ and $Q_j=D_jP_j\in\mathbb{R}^{n_j\times n}$ for $j=1,\dots,d$,
	the formulation in \eqref{eq:fused_add} is further equivalent to
	\begin{eqnarray}\label{eq:lasso_gen_new}
	(\hbt(\by), \{\hbt_j(\by)\}_{j=1}^d,\hat\theta_0(\by),\{\hat\bg_j(\by)\}_{j=1}^d) & \in &\argmin_{\bt, \bt_j,\theta_0,\bg_j } \frac{1}{2}\|\bt-\by\|_2^2+\sum_{j=1}^d\tau\mathbf{1}^\top\bg_j \\
	& & \;\;\;  \text{s.t.}  \;\; \bt-\sum_{j=1}^d \bt_j - \theta_0 \mathbf{1} \leq \mathbf{0}, \;\; -\bt+\sum_{j=1}^d \bt_j + \theta_0 \mathbf{1}  \leq \mathbf{0} \nonumber\\
	& & \;\;\;  \;\;\;\;   \;\; \;Q_j\bt_j-\bg_j  \leq \mathbf{0}, \;\; -Q_j\bt_j-\bg_j \leq \mathbf{0}\nonumber\\
	& & \;\;\;  \;\;\;\;   \;\; \;\mathbf{1}^T  \bt_j\leq0, \;\; -\mathbf{1}^T  \bt_j\leq0,\quad 1 \leq j \leq d. \nonumber
	\end{eqnarray}
	To facilitate our reformulation, we denote by $\otimes$ the Kronecker product between two matrices and let $N=\sum_{j=1}^dn_j$ and  $Q\in\mathbb{R}^{N\times nd}$ defined as
	{\small
		\begin{eqnarray}
		\label{eq:Qdiag}
		Q= \begin{pmatrix}
		Q_1& & &\\
		&Q_2 & &\\
		&  &\ddots &\\
		&  &&Q_d
		\end{pmatrix}.
		\end{eqnarray}
	}
	By setting $\bxi=(\bt_1^\top,\dots,\bt_d^\top,\theta_0,\bg_1^\top,\dots,\bg_d^\top)^\top$, the optimization problem in~\eqref{eq:lasso_gen_new} is a special case of \eqref{eq:LSE} with
	{\small
		\begin{equation}\label{eq:genLasso_A_B1}
		\bd=(\mathbf{0}_{1 \times (nd+1)}, \tau \mathbf{1}_{1 \times N})^T, \; \lambda=0,
		\end{equation}
		\begin{equation}\label{eq:genLasso_A_B2}
		A=\begin{pmatrix}
		\mathbf{1}_{1\times d}\otimes I_{n}&  \mathbf{1}_{n\times1} &\mathbf{0}_{n \times N}\\
		-\mathbf{1}_{1\times d}\otimes I_{n}&  -\mathbf{1}_{n\times1} &\mathbf{0}_{n \times N}\\
		Q&\mathbf{0}_{N\times 1}&-I_{N}\\
		-Q&\mathbf{0}_{N\times 1}&-I_{N}\\
		I_{d}\otimes\mathbf{1}_{1\times n}&\mathbf{0}_{d\times 1}&\mathbf{0}_{d \times N}\\
		-I_{d}\otimes\mathbf{1}_{1\times n}&\mathbf{0}_{d\times 1}&\mathbf{0}_{d \times N}
		\end{pmatrix},\;
		B=\begin{pmatrix}
		-I_{n}  \\
		I_{n}  \\
		\mathbf{0}_{N \times n}   \\
		\mathbf{0}_{N \times n}\\
		\mathbf{0}_{d \times n}\\
		\mathbf{0}_{d \times n}
		\end{pmatrix},\;
		\bc=\mathbf{0}.
		\end{equation}
		%\begin{equation}\label{eq:genLasso_A_B2}
		% A=\begin{pmatrix}
		%  I_{n}&I_{n}&\cdots&I_{n}&  \mathbf{1}_{n} & \mathbf{0}_{n \times nd} \\
		%  -I_{n}&-I_{n}&\cdots&-I_{n}&  -\mathbf{1}_{n} & \mathbf{0}_{n \times nd} \\
		%  Q_1& \mathbf{0}_{n \times n}&\cdots&\mathbf{0}_{n \times n}&   & - I_d  \\
		%    Q_1& \mathbf{0}_{n \times n}&\cdots&\mathbf{0}_{n \times n}   & - I_d  \\
		%\end{pmatrix}, \; B=\begin{pmatrix}
		%  -I_{n}  \\
		%   I_{n}  \\
		%   \mathbf{0}_{l \times n}   \\
		%   \mathbf{0}_{l \times n}
		%\end{pmatrix}, \bc=\mathbf{0}.
		%\end{equation}
	}
	For each $j$, let $\{1,2,\dots,n_j\}$ be the sets of indexes of the rows of $Q_j$ and $\mathbf{q}_{ji}^\top$ be the $i$-th row of $Q_j$ for $i=1,\dots,n_j$. In addition, let $\gamma_{ji}$ be the $i$-th component of $\bg_j$ for $i=1,\dots,n_j$.
	We partition the set $\{1,2,\dots,n_j\}$ into three sets of indexes as:
	\begin{eqnarray}
	\label{eq:Iindex}
	I_+^j := \{i:\mathbf{q}_{ji}^\top\hbt_j(\by)>0\}, \quad I_-^j :=  \{i:\mathbf{q}_{ji}^\top\hbt_j(\by)<0\}, \quad I_0^j := \{i:\mathbf{q}_{ji}^\top\hbt_j(\by)=0\}.
	\end{eqnarray}
	According to the constraints $Q_j\bt_j-\bg_j  \leq \mathbf{0}$ and $-Q_j\bt_j-\bg_j \leq \mathbf{0}$ in \eqref{eq:lasso_gen_new}, the optimality of $\hat\gamma_{ji}(\by)$ will ensure $\hat\gamma_{ji}(\by)=\max(\mathbf{q}_{ji}^\top\hbt_j(\by),-\mathbf{q}_{ji}^\top\hbt_j(\by))$, which implies that $\mathbf{q}_{ji}^\top\hbt_j(\by)-\hat\gamma_{ji}(\by)=0$ for $i\in I_+^j\cup I_0^j$ and $-\mathbf{q}_{ji}^\top\hbt_j(\by)-\hat\gamma_{ji}(\by)=0$ for $i\in I_-^j\cup I_0^j$.
	
	We define $Q_+^j$, $Q_-^j$ and $Q_0^j$ as the sub-matrices of $Q_j$ consisting of the rows of $Q_j$ indexed by $I_+^j$, $I_-^j$ and $I_0^j$, respectively. By ordering
	$$
	\bxi=(\bt_1^\top,\dots,\bt_d^\top,\theta_0,\bg_1^\top,\dots,\bg_d^\top)^\top
	=(\bt_1^\top,\dots,\bt_d^\top,\theta_0,\bg_{1I_+^1}^\top,\dots,\bg_{dI_+^d}^\top,\bg_{1I_-^1}^\top,\dots,\bg_{dI_-^d}^\top,\bg_{1I_0^1}^\top,\dots,\bg_{dI_0^d}^\top)^\top,
	$$
	we can represent the matrices $A_{J_\by}$ and $B_{J_\by}$ as
	{\small\begin{equation*}
		A_{J_\by}=\begin{pmatrix}
		\mathbf{1}_{1\times d}\otimes I_{n}&  \mathbf{1}_{n\times1} &\mathbf{0}&\mathbf{0}&\mathbf{0}\\
		-\mathbf{1}_{1\times d}\otimes I_{n}&  -\mathbf{1}_{n\times1} &\mathbf{0}&\mathbf{0}&\mathbf{0}\\
		Q_+&\mathbf{0}&-I &\mathbf{0}&\mathbf{0} \\
		-Q_-&\mathbf{0}&\mathbf{0}&-I&\mathbf{0}\\
		Q_0&\mathbf{0}&\mathbf{0}&\mathbf{0}&-I\\
		-Q_0&\mathbf{0}&\mathbf{0}&\mathbf{0}&-I\\
		I_{d}\otimes\mathbf{1}_{1\times n}&\mathbf{0}&\mathbf{0}&\mathbf{0}&\mathbf{0}\\
		-I_{d}\otimes\mathbf{1}_{1\times n}&\mathbf{0}&\mathbf{0}&\mathbf{0}&\mathbf{0}
		\end{pmatrix},\;
		B_{J_\by}=\begin{pmatrix}
		-I_{n}  \\
		I_{n}  \\
		\mathbf{0}\\
		\mathbf{0}\\
		\mathbf{0}\\
		\mathbf{0}\\
		\mathbf{0}\\
		\mathbf{0}
		\end{pmatrix},
		\end{equation*}
	}
	where
	{\small
		\begin{eqnarray}
		\label{eq:threeQmatrix}
		Q_+=
		\begin{pmatrix}
		Q_+^1& & &\\
		&Q_+^2 & &\\
		&  &\ddots &\\
		&  &&Q_+^d
		\end{pmatrix},\;
		Q_-=
		\begin{pmatrix}
		Q_-^1& & &\\
		&Q_-^2 & &\\
		&  &\ddots &\\
		&  &&Q_-^d
		\end{pmatrix},\;
		Q_0=
		\begin{pmatrix}
		Q_0^1& & &\\
		&Q_0^2 & &\\
		&  &\ddots &\\
		&  &&Q_0^d
		\end{pmatrix}.
		\end{eqnarray}
	}
	Let $\hat Q_0^j$ for $j=1,\dots,d$ be the sub-matrix of {\small$\begin{pmatrix}Q_0^j\\\mathbf{1}_{n\times1} \end{pmatrix}$} that contains the maximum number of linearly independent rows of {\small$\begin{pmatrix}Q_0^j\\\mathbf{1}_{n\times1} \end{pmatrix}$}. Analyzing the maximum independent rows of $[A_{J_{\by}}\; B_{J_{\by}}]$, we have
	{\small\begin{equation*}
		A_{I_\by}=\begin{pmatrix}
		\mathbf{1}_{1\times d}\otimes I_{n}&  \mathbf{1}_{n\times1} &\mathbf{0}&\mathbf{0}&\mathbf{0}\\
		Q_+&\mathbf{0}&-I &\mathbf{0}&\mathbf{0} \\
		-Q_-&\mathbf{0}&\mathbf{0}&-I&\mathbf{0}\\
		Q_0&\mathbf{0}&\mathbf{0}&\mathbf{0}&-I\\
		-\hat Q_0&\mathbf{0}&\mathbf{0}&\mathbf{0}&E
		\end{pmatrix},\;
		B_{I_\by}=\begin{pmatrix}
		-I  \\
		\mathbf{0}\\
		\mathbf{0}\\
		\mathbf{0}\\
		\mathbf{0}
		\end{pmatrix},
		\end{equation*}}
	where {\small
		$$
		\hat Q_0=
		\begin{pmatrix}
		\hat Q_0^1& & &\\
		&\hat Q_0^2 & &\\
		&  &\ddots &\\
		&  &&\hat Q_0^d
		\end{pmatrix}
		$$}
	is a full-rank matrix and, after appropriately re-ordering, the block of rows
	{\small
		$\begin{pmatrix}
		-\hat Q_0&\mathbf{0}_{N\times 1}&\mathbf{0}&\mathbf{0}&E
		\end{pmatrix}$}
	is the sub-matrix of
	{\small\begin{equation*}
		\begin{pmatrix}
		-Q_0&\mathbf{0}&\mathbf{0}&\mathbf{0}&-I\\
		-I_{d}\otimes\mathbf{1}_{1\times n}&\mathbf{0}&\mathbf{0}&\mathbf{0}&\mathbf{0}
		\end{pmatrix}
		\end{equation*}}
	contained in $A_{J_\by}$ and $E$ is the corresponding sub-matrix of {\small$\begin{pmatrix}-I\\\mathbf{0} \end{pmatrix}$} after the same re-ordering.
	
	Therefore, $|I_\by|=n+\sum_{j=1}^d|I_+^j|+\sum_{j=1}^d|I_-^j|+\sum_{j=1}^d|I_0^j|+\mathrm{rank}(\hat Q_0)$ and
	$$
	\mathrm{rank}(A_{I_\by})=\sum_{j=1}^d|I_+^j|+\sum_{j=1}^d|I_-^j|+\sum_{j=1}^d|I_0^j|+\mathrm{rank}\left(
	\begin{pmatrix}
	\mathbf{1}_{1\times d}\otimes I_{n}&  \mathbf{1}_{n\times1} \\
	\hat Q_0&\mathbf{0}
	\end{pmatrix}
	\right).
	$$
	Let $\widetilde Q_0^j$ for $j=1,\dots,d$ be the matrix whose rows form a basis of the linear space $\mathrm{ker}(\hat Q_0^j)$ and {\small
		$$
		\widetilde Q_0=
		\begin{pmatrix}
		\widetilde Q_0^1& & &\\
		&\widetilde Q_0^2 & &\\
		&  &\ddots &\\
		&  &&\widetilde Q_0^d
		\end{pmatrix}
		$$}
	As a result, the following matrix
	$$
	\begin{pmatrix}
	\mathbf{0}_{1\times nd}&1\\
	\widetilde Q_0&\mathbf{0}\\
	\hat Q_0&\mathbf{0}
	\end{pmatrix}
	$$
	should be a $(nd+1)\times (nd+1)$ invertible matrix. Hence,
	\begin{eqnarray*}
		\mathrm{rank}\left(
		\begin{pmatrix}
			\mathbf{1}_{1\times d}\otimes I_{n}&  \mathbf{1}_{n\times1} \\
			\hat Q_0&\mathbf{0}_{N\times 1}
		\end{pmatrix}
		\right)
		&=&
		\mathrm{rank}\left(
		\begin{pmatrix}
			\mathbf{1}_{1\times d}\otimes I_{n}&  \mathbf{1}_{n\times1} \\
			\hat Q_0&\mathbf{0}
		\end{pmatrix}\cdot
		\begin{pmatrix}
			\mathbf{0}_{nd\times 1}&\widetilde Q_0^\top&\hat Q_0^\top\\
			1&\mathbf{0}&\mathbf{0}
		\end{pmatrix}
		\right)\\
		&=&
		\mathrm{rank}\left(
		\begin{pmatrix}
			\mathbf{1}_{n\times1}&\widetilde Q_0^{1\top}&\cdots&\widetilde Q_0^{d\top}&  \hat Q_0^{1\top}&\cdots&\hat Q_0^{d\top} \\
			\mathbf{0}&\mathbf{0}&\cdots&\mathbf{0}& \hat Q_0^1\hat Q_0^{1\top}&\cdots&\mathbf{0}\\
			\vdots&\vdots&\ddots&\vdots& \vdots&\ddots&\vdots\\
			\mathbf{0}&\mathbf{0}&\cdots&\mathbf{0}& \mathbf{0}&\cdots&\hat Q_0^d\hat Q_0^{d\top}
		\end{pmatrix}
		\right)\\
		&=&
		\mathrm{rank}\left(\begin{pmatrix}
			\mathbf{1}_{n\times1}&\widetilde Q_0^{1\top}&\cdots&\widetilde Q_0^{d\top}
		\end{pmatrix}\right)
		+\mathrm{rank}(\hat Q_0\hat Q_0^\top)\\
		&=&
		\mathrm{rank}\left(\begin{pmatrix}
			\mathbf{1}_{n\times1}&\widetilde Q_0^{1\top}&\cdots&\widetilde Q_0^{d\top}
		\end{pmatrix}\right)
		+\mathrm{rank}(\hat Q_0).
	\end{eqnarray*}
	It is easy to verify that $-\bd=A^\top\bu$ for $\bu=\begin{pmatrix}
	\mathbf{0}_{1\times 2n}& \frac{\tau}{2}\mathbf{1}_{1\times 2N}& \mathbf{0}_{1\times 2d}
	\end{pmatrix}$.
	Hence, according to Theorem \ref{thm:div}, for a.e.~$\by$, we have
	\begin{eqnarray*}
		\mathrm{df} (\hbt(\by))&=& n- |I_\by| +\E[\mathrm{rank}(A_{I_\by})]\\
		&=&\E\left[\mathrm{rank}\left(\begin{pmatrix}
			\mathbf{1}_{n\times1}&\widetilde Q_0^{1\top}&\cdots&\widetilde Q_0^{d\top}
		\end{pmatrix}\right)\right]\\
		&=&\E[\mathrm{dim}(\text{span}\{\mathbf{1}_{n\times1},\mathrm{ker}(K_1),\dots,\mathrm{ker}(K_d)\})].
	\end{eqnarray*}
\end{proof}

\subsection{DF for the $\ell_\infty$-regularized  group Lasso problem}
\label{sec:proof_group}

\begin{repcorollary}{thm:grouplasso_gen}
	In the $\ell_\infty$-regularized group Lasso problem in \eqref{eq:grouplasso_gen} and \eqref{eq:grouplasso_gen_new}, for a.e.~$\by\in\mathbb{R}^n$,
	\[
	\mathrm{df}(\hat\bt(\by))=\mathrm{df} (X\hat\bbt(\by))=\E[\mathrm{rank}(X_{J_0^c})],
	\]
	where
	\[
	J_0=\{i \in \{1,\ldots, d\}:i\in\mathcal{G}_j, \hat\beta_i(\by)=\|\hat\bbt_{\mathcal{G}_j}(\by)\|_\infty \; \text{ for some } j\in\{1,2,\dots,l\}\},
	\]
	and $J_0^c$ is the complement set of $J_0$ and $X_{J_0^c}$ consists of columns of $X$ indexed by $J_0^c$.
\end{repcorollary}

\begin{proof}[Proof of Corollary~\ref{thm:grouplasso_gen}]
	Letting $\bxi=(\bbt^\top,\bg^\top)^\top$ and $\bt=X\bbt$ in \eqref{eq:grouplasso_gen} and defining $E$ as a $d\times l$ matrix with $E_{ij}=1$ if $i\in\mathcal{G}_j$ and $E_{ij}=0$ otherwise, the $\ell_\infty$-group Lasso problem can be reformulated as a special case of \eqref{eq:LSE_div} as shown in \eqref{eq:grouplasso_gen_new} with
	with
	\begin{equation*}
	\bd=(\mathbf{0}_{1 \times d}, \tau \mathbf{1}_{1 \times l})^T, \; \lambda=0, \; A=\begin{pmatrix}
	X & \mathbf{0}_{n \times l} \\
	-X & \mathbf{0}_{n \times l} \\
	I_d  & - E  \\
	-I_d  & - E
	\end{pmatrix}, \; B=\begin{pmatrix}
	-I_{n}  \\
	I_{n}  \\
	\mathbf{0}_{d \times n}   \\
	\mathbf{0}_{d \times n}
	\end{pmatrix}; \;\; \bc=\mathbf{0}.
	\end{equation*}
	
	We define three mutually disjoint sets of indexes as:
	\begin{eqnarray*}
		S_+ &:=& \{i:0<\hat\beta_i(\by)=\|\hat\bbt_{\mathcal{G}_j}(\by)\|_\infty, i\in\mathcal{G}_j\}, \\
		S_- &:=& \{i:0<-\hat\beta_i(\by)=\|\hat\bbt_{\mathcal{G}_j}(\by)\|_\infty, i\in\mathcal{G}_j\}, \\
		S_0 &:=& \{i:0=\hat\beta_i(\by)=\|\hat\bbt_{\mathcal{G}_j}(\by)\|_\infty, i\in\mathcal{G}_j\}.
	\end{eqnarray*}
	%J_0 &:=& \{i:\hat\beta_i(\by)=\|\bbt_{\mathcal{G}_j}(\by)\|_\infty, i\in\mathcal{G}_j }=I_+\cup I_-\cup I_0
	According to the definition of $J_0$, we can show that $J_0=S_+\cup S_-\cup S_0$.
	According to the constraints $ \bbt_{\mathcal{G}_j}-\gamma_j\mathbf{1}_{|\mathcal{G}_j|}  \leq \mathbf{0}$ and $ -\bbt_{\mathcal{G}_j}-\gamma_j\mathbf{1}_{|\mathcal{G}_j|} \leq \mathbf{0}$ in \eqref{eq:grouplasso_gen_new}, the optimality of $\hat\gamma_i(\by)$ will ensure $\hat \gamma_i(\by)=\|\hat\bbt_{\mathcal{G}_j}(\by)\|_\infty$, which implies that $\hat\beta_i(\by)-\hat\gamma_j(\by)= 0$ for $i\in S_+\cup S_0$ and $i\in\mathcal{G}_j$ and $-\hat\beta_i(\by)-\hat\gamma_j(\by)= 0$ for $i\in S_-\cup S_0$ and $i\in\mathcal{G}_j$.
	
	We define $E_+$, $E_-$ and $E_0$ as the sub-matrices of $E$ consisting of the rows of $E$ indexed by $S_+$, $S_-$ and $S_0$, respectively. By ordering $\bxi=(\bbt^\top,\bg^\top)^\top=(\bbt^\top_{J_0^c},\bbt_{I_+}^\top,\bbt_{I_-}^\top,\bbt_{I_0}^\top,\bg^\top)^\top$, we can represent the matrices $A_{J_\by}$ and $B_{J_\by}$ as
	\begin{eqnarray*}
		A_{J_\by}=
		\left(
		\begin{array}{ccccc}
			X^c & X_+ & X_- & X_0&  0\\
			-X^c & -X_+ & -X_- & -X_0&  0\\
			0 & I_{|S_+|} & 0 & 0 & -E_+ \\
			0 & 0 & -I_{|S_-|} & 0 & -E_- \\
			0 & 0 & 0 & I_{|S_0|} & -E_0\\
			0 & 0 & 0 & -I_{|S_0|} & -E_0\\
		\end{array}
		\right) \quad \mbox{ and } \quad
		B_{J_\by}=\left(
		\begin{array}{c}
			-I \\
			I \\
			0 \\
			0 \\
			0 \\
			0 \\
		\end{array}
		\right).
	\end{eqnarray*}
	Let $\hat S_0$ be the subset of $S_0$ and let the sub-matrix $\hat E_0$ of $E_0$ consist of the rows indexed by $\hat S_0$. We choose $\hat S_0$ so that $\hat E_0$ actually consists of the maximum number of linearly independent rows of $E_0$. Suppose $\hat E_0$ has $\hat s$ rows.  We have
	\begin{eqnarray*}
		A_{I_\by}=
		\left(
		\begin{array}{ccccc}
			X^c & X_+ & X_- & X_0&  0\\
			0 & I_{|S_+|} & 0 & 0 & -E_+ \\
			0 & 0 & -I_{|S_-|} & 0 & -E_- \\
			0 & 0 & 0 & I_{|S_0|} & -E_0\\
			0 & 0 & 0 & -I_{\hat s} & -\hat E_0\\
		\end{array}
		\right) \quad  \mbox{ and } \quad
		B_{I_\by}=\left(
		\begin{array}{c}
			-I \\
			0 \\
			0 \\
			0 \\
			0 \\
		\end{array}
		\right).
	\end{eqnarray*}
	Therefore, $|I_\by|=n+|S_+|+|S_-|+|S_0|+\hat s$ and $\mathrm{rank}(A_{I_\by})=|S_+|+|S_-|+|S_0|+\mathrm{rank}(X^c)+\hat s$.
	It is easy to verify that $-\bd=A^\top\bu$ for $\bu=\begin{pmatrix}
	\mathbf{0}_{1\times 2n}& \frac{\tau}{2|\G_1|}\mathbf{1}_{1\times |\G_1|}&\cdots& \frac{\tau}{2|\G_l|}\mathbf{1}_{1\times |\G_l|}& \frac{\tau}{2|\G_1|}\mathbf{1}_{1\times |\G_1|}&\cdots& \frac{\tau}{2|\G_l|}\mathbf{1}_{1\times |\G_l|}
	\end{pmatrix}$.
	Hence, according to Theorem \ref{thm:div}, for a.e.~$\by$, we have
	\begin{eqnarray*}
		\mathrm{df} (X\bbt(\by))&=&\mathrm{df} (\hbt(\by))\\
		&=& n- |I_\by| +\E[\mathrm{rank}(A_{I_\by})]\\
		&=&\E[\mathrm{rank}(X^c)].
	\end{eqnarray*}
\end{proof}

\subsection{Recovering existing results: Lasso, generalized Lasso, linear,  and ridge regression}
\label{sec:recover_other}

The generalized Lasso can be formulated as the following optimization problem \citep{Tib11Gen,TT12}:
\begin{equation}\label{eq:lasso_gen}
\hbbt(\by)\in\argmin_{\bbt\in\mathbb{R}^d}\frac{1}{2}\|\by-X\bbt\|_2^2+\tau\|D\bbt\|_1,
\end{equation}
where $D$ is a given $l\times d$ matrix. When $D=I_d$ (and $l=d$), it reduces to the standard Lasso problem. To see why \eqref{eq:lasso_gen} is a special case of our general optimization formulation in \eqref{eq:LSE}, note that \eqref{eq:lasso_gen} can be re-written as
\begin{equation}\label{eq:lasso_gen_equiv}
(\hat\bbt(\by),\hat\bg(\by))\in\argmin_{-\bg\leq D\bbt\leq\bg}\frac{1}{2}\|\by-X\bbt\|_2^2
+\tau\mathbf{1}^\top\bg.
\end{equation}
Letting $\bt=X\bbt$, the formulation in \eqref{eq:lasso_gen_equiv} is further equivalent to
\begin{eqnarray}\label{eq:lasso_gen_new}
(\hbt(\by), \hat\bbt(\by),\hat\bg(\by)) & \in &\argmin_{\bt, \bbt,\bg } \frac{1}{2}\|\bt-\by\|_2^2+\tau\mathbf{1}^\top\bg \\
& & \;\;\;  \text{s.t.}  \;\; X\bbt - \bt \leq \mathbf{0}, \;\; -X\bbt + \bt \leq \mathbf{0} \nonumber\\
& & \;\;\;  \;\;\;\;   \;\; \;D\bbt-\bg  \leq \mathbf{0}, \;\; -D\bbt-\bg  \leq \mathbf{0}.\nonumber
\end{eqnarray}
By setting $\bxi=(\bbt^\top,\bg^\top)^\top$, the optimization problem in~\eqref{eq:lasso_gen_new} is a special case of \eqref{eq:LSE} with
{\small \begin{equation}\label{eq:genLasso_A_B}
	\bd=(\mathbf{0}_{1 \times d}, \tau \mathbf{1}_{1 \times l})^T, \; \lambda=0, \; A=\begin{pmatrix}
	X & \mathbf{0}_{n \times l} \\
	-X & \mathbf{0}_{n \times l} \\
	D  & - I_l  \\
	-D  & - I_l
	\end{pmatrix}, \; B=\begin{pmatrix}
	-I_{n}  \\
	I_{n}  \\
	\mathbf{0}_{l \times n}   \\
	\mathbf{0}_{l \times n}
	\end{pmatrix}, \bc=\mathbf{0}.
	\end{equation}}
\citet{TT12} computed the DF of $\hat{\bt}(\by)=X\hbbt(\by)$ for generalized Lasso (see Theorem 3 of \citet{TT12}). In the next corollary, we show that the result of~\citet{TT12} can be obtained as a direct consequence of our general theory (Theorem \ref{thm:div}).

\begin{corollary}\label{thm:div_lasso_gen}
	In the generalized Lasso problem in \eqref{eq:lasso_gen} and \eqref{eq:lasso_gen_new}, for a.e.~$\by\in\mathbb{R}^n$, $\mathrm{df}(\hat\bt(\by))= \mathrm{df} (X\hat\bbt(\by))=\E[\mathrm{dim}(X\mathrm{ker}(D_0))],$
	where  $D_0 \in \R^{l_0 \times d}$  is the sub-matrix of $D$ consisting of rows $\mathbf{d}_i$'s of $D$ such that $\mathbf{d}_i^\top\hat\bbt(\by)=0$ and
	$\mathrm{ker}(D_0) :=\{\bx\in\mathbb{R}^d : D_0\bx=\mathbf{0}\}$ is the kernel of $D_0$.
\end{corollary}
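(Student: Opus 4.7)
The plan is to apply Theorem~\ref{thm:div_mul_cvx_gen} directly to the partial-projection reformulation \eqref{eq:lasso_gen_new} of the generalized Lasso, with $\bxi=(\bbt^\top,\bg^\top)^\top\in\R^{d+l}$, and
\[
A=\begin{pmatrix}X & \mathbf{0}\\ -X & \mathbf{0}\\ D & -I_l\\ -D & -I_l\end{pmatrix},\qquad B=\begin{pmatrix}-I_n\\ I_n\\ \mathbf{0}\\ \mathbf{0}\end{pmatrix},\qquad \bc=\mathbf{0},\qquad \bd=\begin{pmatrix}\mathbf{0}_d\\ \tau\mathbf{1}_l\end{pmatrix}.
\]
First I would verify the boundedness hypothesis of Lemma~\ref{lem:bounded}: with $\bla=\tfrac{\tau}{2}(\mathbf{0}^\top,\mathbf{0}^\top,\mathbf{1}^\top,\mathbf{1}^\top)^\top\geq\mathbf{0}$, the $D$ and $-D$ contributions cancel in the $\bbt$-block while the $-I_l$ blocks add to $-\tau\mathbf{1}_l$, so $A^\top\bla=-\bd$ and Theorem~\ref{thm:div_mul_cvx_gen} applies.

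Next I would read off the active index set $J_\by$ from \eqref{eq:J_y_proj}. Because $\hbt(\by)=X\hbbt(\by)$ at any optimum, the first $2n$ rows are automatically tight. Optimality in $\bg$ in \eqref{eq:lasso_gen_new} forces $\hat\gamma_i=|\mathbf{d}_i^\top\hbbt(\by)|$, so the $i$-th row of block~3 is tight iff $\mathbf{d}_i^\top\hbbt(\by)\geq 0$ and the $i$-th row of block~4 is tight iff $\mathbf{d}_i^\top\hbbt(\by)\leq 0$. Writing $\{1,\ldots,l\}=S_+\sqcup S_-\sqcup S_0$ according to the sign of $\mathbf{d}_i^\top\hbbt(\by)$, the set $S_0$ indexes exactly the rows of $D_0$, and $l_0=|S_0|$.

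The technical core is to compute $|I_\by|$ and $\mathrm{rank}(A_{I_\by})$ (the quantity $n-|I_\by|+\mathrm{rank}(A_{I_\by})$ is independent of the choice of $I_\by$ by Theorem~\ref{thm:div_mul_cvx_gen}, despite possible non-uniqueness of $\hat{\bxi}(\by)$). The $n$ block-1 rows $[X_j,\mathbf{0},-\mathbf{e}_j]$ are independent of all other rows since $-I_n$ occupies their $B$-block, so all $n$ of them enter $I_\by$; block-2 rows are negatives of block-1 rows and add nothing. The remaining tight rows come from blocks~3 and~4 and lie entirely in the $(\bbt,\bg)$-subspace: for $i\notin S_0$ there is a single tight row $[\pm\mathbf{d}_i,-\mathbf{e}_i]$, and for $i\in S_0$ both $[\mathbf{d}_i,-\mathbf{e}_i]$ and $[-\mathbf{d}_i,-\mathbf{e}_i]$ are tight. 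Projecting these rows onto the $\bg$-component surjects onto $\R^l$ (one $-\mathbf{e}_i$ per $i$), and the kernel of this projection inside their row span equals $\mathrm{rowspan}(D_0)\times\{\mathbf{0}\}$, obtained by taking differences of the two rows at each $i\in S_0$. Therefore these block-3/4 rows span an $(l+\mathrm{rank}(D_0))$-dimensional subspace, so any maximal independent selection from them has cardinality $l+\mathrm{rank}(D_0)$. Combining block~1 with this block-3/4 selection and running the same $\bg$-projection argument (now with block-1 rows contributing $\mathrm{rowspan}(X)\times\{\mathbf{0}\}$ to the kernel of the $\bg$-projection), I obtain
\[
|I_\by|=n+l+\mathrm{rank}(D_0),\qquad \mathrm{rank}(A_{I_\by})=l+\dim\bigl(\mathrm{rowspan}(X)+\mathrm{rowspan}(D_0)\bigr).
\]

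Plugging into Theorem~\ref{thm:div_mul_cvx_gen} gives
\[
D(\by)=n-|I_\by|+\mathrm{rank}(A_{I_\by})=\dim\bigl(\mathrm{rowspan}(X)+\mathrm{rowspan}(D_0)\bigr)-\mathrm{rank}(D_0).
\]
Using $\mathrm{rowspan}(X)+\mathrm{rowspan}(D_0)=(\ker X\cap\ker D_0)^\perp$ and rank-nullity for $X$ restricted to $\ker(D_0)$, namely $\dim(X\ker(D_0))=\dim\ker(D_0)-\dim(\ker X\cap\ker D_0)=d-\mathrm{rank}(D_0)-\dim(\ker X\cap\ker D_0)$, this simplifies to $D(\by)=\dim(X\ker(D_0))$, and taking expectations completes the proof. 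The main obstacle is the rank bookkeeping for blocks~3 and~4 when the rows of $D_0$ are linearly dependent: one must remember that $I_\by$ is a \emph{maximal independent} subset of $J_\by$ rather than $J_\by$ itself, so that the block-3/4 contribution to $|I_\by|$ is $l+\mathrm{rank}(D_0)$ rather than the naive $l+l_0$.
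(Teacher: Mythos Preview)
Your proof is correct and follows essentially the same route as the paper: reformulate as \eqref{eq:lasso_gen_new}, apply Theorem~\ref{thm:div_mul_cvx_gen}, identify $J_\by$ via the sign partition of $\{\mathbf d_i^\top\hat\bbt(\by)\}$, and compute $|I_\by|=n+l+\mathrm{rank}(D_0)$ and $\mathrm{rank}(A_{I_\by})=l+\mathrm{rank}\bigl[\begin{smallmatrix}X\\D_0\end{smallmatrix}\bigr]$. The only difference is cosmetic: the paper writes out $A_{I_\by}$ explicitly as a block matrix and then simplifies $\mathrm{rank}\bigl[\begin{smallmatrix}X\\\hat D_0\end{smallmatrix}\bigr]-\mathrm{rank}(\hat D_0)$ by right-multiplying by the invertible matrix $[(\hat D_0^c)^\top,\hat D_0^\top]$ (with rows of $\hat D_0^c$ a basis of $\ker D_0$), whereas you reach the same quantity via the $\bg$-projection argument and then simplify using $(\mathrm{rowspan}\,X+\mathrm{rowspan}\,D_0)^\perp=\ker X\cap\ker D_0$ together with rank--nullity for $X|_{\ker D_0}$; your explicit check of the boundedness hypothesis in Lemma~\ref{lem:bounded} is a nice addition the paper omits.
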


\begin{proof}[Proof of Corollary \ref{thm:div_lasso_gen}]
	%Note that \eqref{eq:lasso_gen} has the following equivalent formulation
	% \begin{equation}\label{eq:lasso_gen_equiv}
	% (\hat\bbt(\by),\hat\bg(\by))\in\argmin_{-\bg\leq D\bbt\leq\bg}\frac{1}{2}\|\by-X\bbt\|_2^2
	% +\tau\mathbf{1}^\top\bg.
	%\end{equation}
	Letting $\bxi=(\bbt^\top,\bg^\top)^\top$ and $\bt=X\bbt$ in \eqref{eq:lasso_gen}, the generalized Lasso problem can be reformulated as a special case of \eqref{eq:supp_LSE} as shown in \eqref{eq:lasso_gen_new}.
	%\begin{eqnarray*}
	%  (\hbt(\by), \hat\bbt(\by),\hat\bg(\by)) & \in &\argmin_{\bt, \bbt,\bg } \frac{1}{2}\|\bt-\by\|_2^2+\tau\mathbf{1}^\top\bg\\
	%                          & & \;\;\;  \text{s.t.}  \;\; X\bbt - \bt \leq \mathbf{0} \nonumber\\
	%                          & & \;\;\;  \;\;   \;\; -X\bbt + \bt \leq \mathbf{0} \nonumber\\
	%                          & & \;\;\;  \;\;\;\;   \;\; D\bbt-\bg  \leq \mathbf{0} \nonumber\\
	%                          & & \;\;\;  \;\;   \;\; -D\bbt-\bg  \leq \mathbf{0}. \nonumber
	%\end{eqnarray*}
	We partition $\{1,2,\dots,l\}$ into three sets of indexes as:
	\begin{eqnarray*}
		I_+ := \{i:\mathbf{d}_i^\top\hat\bbt(\by)>0\}, \quad I_- := \{i:\mathbf{d}_i^\top\hat\bbt(\by)<0\}, \quad I_0 := \{i:\mathbf{d}_i^\top\hat\bbt(\by)=0\}.
	\end{eqnarray*}
	According to the constraints $ D\bbt-\bg \leq \mathbf{0}$ and $-D\bbt-\bg  \leq \mathbf{0}$ in \eqref{eq:lasso_gen_new}, the optimality of $\hat\gamma_i(\by)$ will ensure $\hat \gamma_i(\by)=\max(\mathbf{d}_i^\top\hat\bbt(\by),-\mathbf{d}_i^\top\hat\bbt(\by))$, which implies that $\mathbf{d}_i^\top\hat\bbt(\by)-\hat\gamma_i(\by)=0$ for $i\in I_+\cup I_0$ and $-\mathbf{d}_i^\top\hat\bbt(\by)-\hat\gamma_i(\by)=0$ for $i\in I_-\cup I_0$.
	
	We define $D_+$, $D_-$ and $D_0$ as the sub-matrices of $D$ consisting of the rows of $D$ indexed by $I_+$, $I_-$ and $I_0$, respectively. By ordering $\bxi=(\bbt^\top,\bg^\top)^\top=(\bbt^\top,\bg_{I_+}^\top,\bg_{I_-}^\top,\bg_{I_0}^\top)^\top$, we can represent the matrices $A_{J_\by}$ and $B_{J_\by}$ as
	\begin{eqnarray*}
		A_{J_\by}=
		\left(
		\begin{array}{cccc}
			X & 0 & 0 & 0 \\
			-X & 0 & 0 & 0 \\
			D_+ & -I & 0 & 0 \\
			-D_- & 0 & -I & 0 \\
			D_0 & 0 & 0 & -I \\
			-D_0 & 0 & 0 & -I \\
		\end{array}
		\right) \quad \mbox{ and } \quad
		B_{J_\by}=\left(
		\begin{array}{c}
			-I \\
			I \\
			0 \\
			0 \\
			0 \\
			0 \\
		\end{array}
		\right).
	\end{eqnarray*}
	Let $\hat D_0$ be the sub-matrix of $D_0$ that contains the maximum number of linearly independent rows of $D_0$. Suppose $\hat D_0$ has $\hat l$ rows.  We have
	\begin{eqnarray*}
		A_{I_\by}=
		\left(
		\begin{array}{cccc}
			X & 0 & 0 & 0 \\
			D_+ & -I & 0 & 0 \\
			-D_- & 0 & -I & 0 \\
			D_0 & 0 & 0 & -I \\
			-\hat D_0 & 0 & 0 & [-I_{\hat l}\;\; 0] \\
		\end{array}
		\right) \quad  \mbox{ and } \quad
		B_{I_\by}=\left(
		\begin{array}{c}
			-I \\
			0 \\
			0 \\
			0 \\
			0 \\
		\end{array}
		\right).
	\end{eqnarray*}
	Therefore, $|I_\by|=n+|I_+|+|I_-|+|I_0|+\mathrm{rank}(\hat D_0)$ and $\mathrm{rank}(A_{I_\by})=|I_+|+|I_-|+|I_0|+\mathrm{rank}([X^\top,\hat D_0^\top])$. Let $\hat D_0^c$ be an $(d-\hat l)\times d$ matrix whose rows form a basis of the linear space $\text{ker}(\hat D_0)$. Then $[(\hat D_0^c)^\top,\hat D_0^\top]$ becomes a $d\times d$ invertible matrix. It is easy to verify that $-\bd=A^\top\bu$ for $\bu=\begin{pmatrix}
	\mathbf{0}_{1\times 2n}& \frac{\tau}{2}\mathbf{1}_{1\times 2l}
	\end{pmatrix}$. Hence,
	\begin{eqnarray*}
		\mathrm{rank}\left(
		\left[
		\begin{array}{c}
			X \\
			\hat D_0 \\
		\end{array}
		\right]
		\right)&=&
		\mathrm{rank}\left(
		\left[
		\begin{array}{c}
			X \\
			\hat D_0 \\
		\end{array}
		\right]\cdot[(\hat D_0^c)^\top,\hat D_0^\top]
		\right)\\
		&=&
		\mathrm{rank}\left(
		\left[
		\begin{array}{cc}
			X(\hat D_0^c)^\top & X\hat D_0^\top \\
			0 & \hat D_0\hat D_0^\top \\
		\end{array}
		\right]
		\right)\\
		&=&
		\mathrm{rank}(X(\hat D_0^c)^\top)+\mathrm{rank}(\hat D_0\hat D_0^\top)\\
		&=&\mathrm{rank}(X(\hat D_0^c)^\top)+\mathrm{rank}(\hat D_0).
	\end{eqnarray*}
	According to Theorem \ref{thm:div}, for a.e.~$\by$, we have
	\begin{eqnarray*}
		\mathrm{df} (X\bbt(\by))&=&\mathrm{df} (\hbt(\by))\\
		&=& n- |I_\by| +\E[\mathrm{rank}(A_{I_\by})]\\
		&=&\E[\mathrm{rank}(X(\hat D_0^c)^\top)]\\
		&=&\E[\mathrm{dim}(X\mathrm{ker}(D_0))].
	\end{eqnarray*}
\end{proof}

Corollary \ref{thm:div_lasso_gen} is true even when \eqref{eq:lasso_gen} have multiple optimal solutions $\hat\bbt(\by)$s and the matrix $D_0$ can be different for each optimal solution. In fact, even if different optimal solutions $\hat\bbt(\by)$s correspond to different $D_0$s, the divergence of $\hat\bt(\by)=X\hat\bbt(\by)$ will always be the same for a.e. $\by$.

Note that the standard Lasso is a special case of generalized Lasso (see~\eqref{eq:lasso_gen}) with $D=I_d$. In the next corollary we provide the DF of $X\hat\bbt(\by)$ for the Lasso estimator $\hat\bbt(\by)$. It recovers the result in Theorem 1 in \citet{Zou07} and Theorem 2 in \citet{TT12}.

%For a response vector $\by\in\mathbb{R}^n$ and a predictor matrix $X\in\mathbb{R}^{n\times d}$, we consider a lasso problem
% \begin{equation}\label{eq:lasso}
% \bbt(\by)\in\argmin_{\bbt\in\mathbb{R}^d}\frac{1}{2}\|\by-X\bbt\|_2^2+\tau\|\bbt\|_1.
%\end{equation}
%Theorem \ref{thm:div_mul_cvx_gen} can be applied to \eqref{eq:lasso} to recovery the DF for its estimator, namely, $\mathrm{df} (X\bbt(\by))$.

\begin{corollary}\label{thm:div_lasso}
	In the Lasso problem \eqref{eq:lasso_gen} with $D=I_d$, for a.e.~$\by\in\mathbb{R}^n$, $ \mathrm{df}(\hat\bt(\by))=\mathrm{df} (X\hat\bbt(\by))=\E[\mathrm{rank}(X_{J_0^c})],
	$ where  $J_0=\{1\le i \le d:\hat\beta_i(\by)=0\}$, $J_0^c$ is the complement of $J_0$ and $X_{J_0^c}$ contains columns of $X$ indexed by $J_0^c$.
\end{corollary}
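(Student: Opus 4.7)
The plan is to deduce this from Corollary \ref{thm:div_lasso_gen} by specializing $D = I_d$ and tracking what the objects $D_0$, $\ker(D_0)$, and $X\ker(D_0)$ become in this case. Since Corollary \ref{thm:div_lasso_gen} already handles the general Lasso with an arbitrary penalty matrix $D$, the Lasso case with $D = I_d$ should reduce almost verbatim to a statement about column ranks of $X$.

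First I would recall that with $D = I_d$, the rows $\bd_i$ of $D$ are precisely the standard basis vectors $\be_i \in \R^d$, so the condition $\bd_i^\top \hat\bbt(\by) = 0$ becomes $\hat\beta_i(\by) = 0$. Hence the index set $\{i : \bd_i^\top \hat\bbt(\by) = 0\}$ appearing in Corollary \ref{thm:div_lasso_gen} coincides with $J_0 = \{i : \hat\beta_i(\by) = 0\}$, and $D_0$ is the sub-identity matrix consisting of the rows $\be_i$ with $i \in J_0$.

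Next I would identify the kernel of $D_0$: a vector $\bx \in \R^d$ satisfies $D_0 \bx = \mathbf{0}$ if and only if $x_i = 0$ for every $i \in J_0$. Therefore $\ker(D_0) = \mathrm{span}\{\be_j : j \in J_0^c\}$. Applying $X$ to this subspace gives $X\ker(D_0) = \mathrm{span}\{X \be_j : j \in J_0^c\}$, which is precisely the column span of the submatrix $X_{J_0^c}$. Taking dimensions yields $\mathrm{dim}(X\ker(D_0)) = \mathrm{rank}(X_{J_0^c})$.

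Finally, substituting into the formula $\mathrm{df}(X\hat\bbt(\by)) = \E[\mathrm{dim}(X\ker(D_0))]$ from Corollary \ref{thm:div_lasso_gen} gives $\mathrm{df}(X\hat\bbt(\by)) = \E[\mathrm{rank}(X_{J_0^c})]$, and the equality $\mathrm{df}(\hat\bt(\by)) = \mathrm{df}(X\hat\bbt(\by))$ is immediate from the definition $\hat\bt = X\hat\bbt$. I do not anticipate a genuine obstacle here, since the reduction is a pure linear-algebra specialization; the only subtlety to state carefully is that the ``a.e.~$\by$'' qualifier in Corollary \ref{thm:div_lasso_gen} is inherited, and that the identification of $J_0$ with the zero-pattern of $\hat\bbt(\by)$ does not depend on non-uniqueness issues of $\hat\bbt(\by)$ because the resulting quantity $\mathrm{rank}(X_{J_0^c})$ is ultimately invariant under the relevant choices (guaranteed by the well-posedness of $D(\by)$ already established in Theorem \ref{thm:div_mul_cvx_gen}).
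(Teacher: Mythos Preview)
Your proposal is correct and follows essentially the same route as the paper's own proof: specialize Corollary~\ref{thm:div_lasso_gen} to $D=I_d$, identify $D_0$ with the rows of $I_d$ indexed by $J_0$, compute $\ker(D_0)=\{\bx:\ x_i=0,\ i\in J_0\}$, and conclude $\dim(X\ker(D_0))=\mathrm{rank}(X_{J_0^c})$. The extra remarks you make about inheriting the ``a.e.~$\by$'' qualifier and about invariance under non-uniqueness of $\hat\bbt(\by)$ are fine but not needed for the argument.
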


It is worthwhile to note that Corollary \ref{thm:div_lasso} is true when $X$ does not have rank $p$. Note that when $X$ doesn't have rank $p$,  \eqref{eq:lasso_gen} with $D=I_d$ can have multiple optimal solutions $\hbbt(\by)$s and the inactive set $J_0$ can be different for each optimal solution. However, Corollary \ref{thm:div_lasso} does not require the inactive set $J_0$ to be unique and holds for any optimal solution  $\hbbt(\by)$ in \eqref{eq:lasso_gen}.
In fact, for different optimal solutions $\bbt(\by)$s with different $J_0$s, the divergence of $\hat\bt(\by)=X\hat\bbt(\by)$ will always be the same for a.e. $\by$.

\begin{proof}[Proof of Corollary~\ref{thm:div_lasso}]
	In the special case of~\eqref{eq:lasso_gen} with $D=I_d$, the matrix $D_0$ in Corollary~\ref{thm:div_lasso_gen} consists of the rows of $I_d$ indexed by $J_0$, which is essentially a projection matrix from $\mathbb{R}^d$ to the coordinates indexed by $J_0$. Therefore, $\mathrm{ker}(D_0)=\{\bx\in\mathbb{R}^d:\bx_i=0, \forall i\in J_0\}$ so that $\mathrm{dim}(X\mathrm{ker}(D_0))$=$\mathrm{rank}(X_{J_0^c})$ and the conclusion follows.
\end{proof}

%Similar to Corollary \ref{thm:div_lasso_gen},

The classical results on the DF of linear and ridge regression (see \citet{Li86}) can also be readily obtained as simple consequences of Theorem \ref{thm:div}. %In particular, given the response vector $\by\in\mathbb{R}^n$ and the design matrix $X\in\mathbb{R}^{n\times p}$, let $\hbbt(\by)$  and $ \hat{\bbt}_\lambda(\by)$ denote the ordinary LSE and ridge regression estimators, respectively. We obtain the following corollaries from Theorem \ref{thm:div}. %; see Section \ref{sec:proof_other_app} of the supplement for the details.

For linear regression, given the response vector $\by\in\mathbb{R}^n$ and the design matrix $X\in\mathbb{R}^{n\times p}$, the ordinary LSE is defined as
\begin{equation}\label{eq:lr}
\hbbt(\by)\in\argmin_{\bbt \in\mathbb{R}^p}\frac{1}{2}\|\by-X\bbt\|_2^2.
\end{equation}
%For the purpose of model selection, it is of great interest to compute the DF of $X \hbbt(\by)$, namely, $\mathrm{df}(X \hbbt(\by))$.
By setting $\bxi=\bbt$ and $\bt=X\bbt$, \eqref{eq:lr} can be reformulated as a special case of \eqref{eq:LSE_mul_cvx}, i.e.,
\begin{equation}\label{eq:lr_new}
\quad (\hbt(\by), \hbxi(\by)) \; \in \; \argmin_{\bt, \bxi } \frac{1}{2}\|\bt-\by\|_2^2 \qquad \textrm{s.t.}  \; \begin{pmatrix} X \\ -X \end{pmatrix} \bxi + \begin{pmatrix} -I_n \\  I_n \end{pmatrix} \bt \leq \mathbf{0},
\end{equation}
which is in the form of \eqref{eq:supp_LSE} with $A=\begin{pmatrix} X \\ -X \end{pmatrix}$, $B=\begin{pmatrix} -I_n \\  I_n \end{pmatrix}$, $\bc=\mathbf{0}$, $\bd=\mathbf{0}$ and $\lambda=0$. Theorem \ref{thm:div} directly implies the following corollary, which establishes the well-known result that for the LSE $ \mathrm{df} (X\hbbt(\by))= \mathrm{rank}(X)$.%The calculation is straightforward and shown in the supplementary material.

%The classical results on the DF of linear and ridge regression (see \citet{Li86}) can also be readily obtained as simple consequences of Theorem \ref{thm:div}. In particular, given the response vector $\by\in\mathbb{R}^n$ and the design matrix $X\in\mathbb{R}^{n\times p}$, let $\hbbt(\by)$  and $ \hat{\bbt}_\lambda(\by)$ denote the ordinary LSE and ridge regression estimators, respectively. We obtain the following corollaries from Theorem \ref{thm:div}; see Section \ref{sec:proof_other_app} of the supplement for the details.

\begin{corollary}\label{prop:linearregression}
	Let $\hbbt(\by)$ be the ordinary LSE (i.e., $\hbbt(\by)\in\argmin_{\bbt \in\mathbb{R}^p}\frac{1}{2}\|\by-X\bbt\|_2^2$). The divergence of $\hbt(\by)=X\hbbt(\by)$ equals $\mathrm{rank}(X)$ a.s. Thus, $\mathrm{df}(X \hbbt(\by))=\mathrm{rank}(X)$.
\end{corollary}

\begin{proof}[Proof of Corollary \ref{prop:linearregression}]
	Note that, an equivalent formulation of the LSE given in \eqref{eq:lr_new} is a special case of \eqref{eq:supp_LSE} when $\bd=\mathbf{0}$.
	Since each feasible solution of \eqref{eq:lr_new} must satisfy $X \bxi - \bt=0$, $J_\by$, as defined in~\eqref{eq:J_y_2}, includes all the constraints of \eqref{eq:lr_new} and $A_{J_\by}=\left[X^\top, -X^\top\right]^\top$ and $B_{J_\by}=\left[ -I_n, I_n \right]^\top$. Since $B_{J_\by}$ contains $I_n$,  all the rows of $[A_{J_\by}, B_{J_\by}]$ are linear independent and thus  $I_\by=J_\by$ with $|I_\by|=n$.
	%$$
	%A_{J_\by}=\left[\begin{array}{c}
	%X \\
	%-X
	%\end{array}
	%\right],\quad
	%B_{J_\by}=\left[\begin{array}{c}
	%-I_n \\
	%I_n
	%\end{array}
	%\right].
	%$$
	According to Theorem \ref{thm:div}, for a.e.~$\by$, we have
	\begin{eqnarray*}
		\mathrm{df} (X\hbbt(\by)) =\mathrm{df} (\hbt(\by))
		= n- |I_\by| + \E[\mathrm{rank}(A_{I_\by})]
		=\mathrm{rank}(X).
	\end{eqnarray*}
\end{proof}

%\subsection{DF for ridge regression}

Ridge regression, described as
\begin{equation}\label{eq:ridge}
\hat{\bbt}_\lambda(\by)=\argmin_{\bbt\in\mathbb{R}^d}\frac{1}{2}\|\by-X\bbt\|_2^2+\frac{\lambda}{2}\|\bbt\|_2^2,
\end{equation}
can also be shown to be a special case of the general optimization problem~\eqref{eq:supp_LSE} by letting and $\bxi=\bbt$  and $\bt= X \bbt$. In particular, using the same reformulation as in \eqref{eq:lr_new}, the ridge estimator  in  \eqref{eq:ridge} is a special case of \eqref{eq:supp_LSE} with $A$ and $B$ as in~\eqref{eq:lr_new}, $\bc=\mathbf{0}$, $\bd=\mathbf{0}$ and $\lambda>0$. Theorem~\ref{thm:div} can be applied to \eqref{eq:ridge} to obtain $\mathrm{df} (X\hat\bbt(\by))$.

\begin{corollary}\label{thm:div_ridge}
	In ridge regression $\hat{\bbt}_\lambda(\by)=\argmin_{\bbt\in\mathbb{R}^d}\frac{1}{2}\|\by-X\bbt\|_2^2+\frac{\lambda}{2}\|\bbt\|_2^2$. For a.e.~$\by\in\mathbb{R}^n$,
	$\mathrm{df} (X\hat\bbt_\lambda(\by))=\mathrm{trace}\left(X\left( \lambda I_d+X^\top X\right)^{-1}X^\top\right)$.
\end{corollary}

\begin{proof}[Proof of Corollary \ref{thm:div_ridge}]
	By setting $\bxi=\bbt$ and $\bt=X\bbt$, \eqref{eq:ridge} can be reformulated as a special case of \eqref{eq:LSE_Lip_multi_conv}, i.e.,
	\begin{eqnarray}\label{eq:ridge_new}
	(\hbt_\lambda(\by), \hbxi_\lambda(\by)) & = &\argmin_{\bt, \bxi } \frac{1}{2}\|\bt-\by\|_2^2+\frac{\lambda}{2}\|\bxi\|_2^2 \\
	& & \;\;\;  \text{s.t.}  \;\; X \bxi - \bt \leq \mathbf{0} \nonumber\\
	& & \;\;\;  \;\;  \;\; -X \bxi + \bt \leq \mathbf{0}. \nonumber
	\end{eqnarray}
	Since each feasible solution of \eqref{eq:ridge_new} must satisfy $X \bxi - \bt=0$, $J_\by$ includes all the constraints of \eqref{eq:ridge_new} and thus $A_{J_\by}=\left[X^\top, -X^\top\right]^\top$ and $B_{J_\by}=\left[ -I_n, I_n \right]^\top.$
	%$$
	%A_{J_\by}=\left[\begin{array}{c}
	%X \\
	%-X
	%\end{array}
	%\right],\quad
	%B_{J_\by}=\left[\begin{array}{c}
	%-I_n \\
	%I_n
	%\end{array}
	%\right].
	%$$
	It is easy to see that $A_{I_\by}=X$ and $B_{I_\by}=-I_n
	$. According to Theorem \ref{thm:div}, for a.e.~$\by \in \R^n$, we have
	\begin{eqnarray*}
		\mathrm{df} (X\hat\bbt_\lambda(\by))&=&\mathrm{df} (\hbt_\lambda(\by))\\
		&=&n - \mathrm{trace}\left(I_n+ \frac{1}{\lambda} XX^\top\right)^{-1}\\
		&=&n -\mathrm{trace}\left(I_n \right)+\mathrm{trace}\left(X\left( \lambda I_d+X^\top X\right)^{-1}X^\top\right)\\
		&=&\mathrm{trace}\left(X\left( \lambda I_d+X^\top X\right)^{-1}X^\top\right),
	\end{eqnarray*}
	where the third equality is due to the Sherman-Morrison-Woodbury formula.
\end{proof}

\bibliographystyle{Chicago}
\bibliography{AG}

\end{document}